\newtheorem{thm}{Theorem}[section]
\newtheorem{cor}[thm]{Corollary}
\newtheorem{lem}[thm]{Lemma}
\newtheorem{prop}[thm]{Proposition}
\theoremstyle{definition}
\newtheorem{exam}[thm]{Example}
\newtheorem{rem}[thm]{Remark}
\newtheorem{alg}[thm]{Algorithm}
\newcommand {\N} {{\mathbb N}}
\newcommand {\kk} {\Bbbk}
\DeclareMathOperator{\im}{im}
\DeclareMathOperator{\reg}{reg}
 \DeclareMathOperator{\iin}{in}
\DeclareMathOperator{\pd}{pd}
\begin{document}

\title[On the extremal Betti numbers of the binomial edge ideal  of closed graphs]{On the extremal Betti numbers of the binomial edge ideal  of closed graphs}

\author{Hern\'an de Alba}

\email{hdealbac@matematicas.reduaz.mx}
\address{CONACYT - Unidad Acad\'emica de Matem\'aticas de la Universidad Aut\'onoma de Zacatecas,Calzada Solidaridad
entronque Paseo de la Bufa, Zacatecas, Zac. 98000, Mexico}

\author{Do  Trong Hoang}

\email{dthoang@math.ac.vn}
\address{Institute of Mathematics, Vietnam Academy of Science and Technology, 18 Hoang Quoc Viet, 10307 Hanoi, Vietnam}
 
\subjclass[2010]{05C30, 05D15}
\keywords{Closed graphs, Binomial edge ideals, Projective dimension,  Betti numbers}
\thanks{}
\date{}
\dedicatory{}
\commby{}
\begin{abstract}  We study the equality of the extremal Betti numbers of the binomial edge ideal $J_G$ and those of its initial ideal ${\rm in}(J_G)$ for a closed graph $G$.  We prove that in some cases there is an unique extremal Betti number for ${\rm in}(J_G)$ and as a consequence there is an unique extremal Betti number for $J_G$ and these extremal Betti numbers are equal. 
\end{abstract}
\maketitle
\section*{Introduction}

Let $S = \kk[x_1,\ldots, x_n]$ be a polynomial ring over an arbitrary field $\kk$. If $I$ is a homogeneous ideal of $S$, then $I$ has an unique minimal graded free resolution up to isomorphism
  $$0\to \oplus_{j} S(-j)^{\beta_{l,j}(S/I)} \to  \oplus_{j} S(-j)^{\beta_{l-1,j}(S/I)} \to\cdots\to  \oplus_{j} S(-j)^{\beta_{0,j}(S/I)} \to S/I \to 0 $$
where $l \le n$, and $S(-j)$ is the $S$-module shifted by $j$. The number $\beta_{i,j}(S/I)$,  the {\it $ij$-th graded Betti number} of $S/I$, is an invariant of $S/I$.  The projective dimension of $S/I$  is defined to be $\pd(S/I):=\max\{i\mid \beta_{i,j}(S/I)\ne 0\}$. The regularity of $S/I$ is defined by $\reg(S/I):=\max\{j-i\mid \beta_{i,j}(S/I)\ne 0\}$.  A Betti number $\beta_{i,j}(S/I)\ne  0$   is called {\it extremal} if $\beta_{l,r} (S/I)= 0$ for all   $l\ge i, r\ge j+1$ and $r-l\ge j-i$.  A nice property of the extremal Betti numbers is $S/I$ has an unique extremal Betti number if and only if $\beta_{p,p+r}(S/I)\neq 0$, where $p=\pd (S/I)$ and $r=\reg(S/I)$.

Let $G$ be a  finite simple  graph   on the vertex set $V(G)=\{1,\ldots,n\}$ and edge set $E(G)$. Let $R:=\kk[x_1,\ldots,x_n,y_1,\ldots,y_n]$ be  a polynomial ring of $2n$ variables over a given field $\kk$.  The   {\it binomial edge ideal} of $G$ is  $$J_G=(x_iy_j-x_jy_i\mid \{i,j\}\in E(G) \text{ and } i<j)\subseteq R.$$ 
This ideal was independently introduced by   Herzog et al. \cite{HHHKR}; and  Ohtani \cite{O}.  Many of the algebraic properties and invariants of such ideals have been
studied in \cite{BN, CDI, Do, EHH, SZ}.

The  Gr\"obner basis with respect to the  lexicographic order induced by $x_1>\cdots> x_n>y_1>\cdots> y_n$   was computed in \cite[Theorem 1.1]{HHHKR}. It turned out that this Gr\"obner basis is quadratic if and only if the graph $G$ is a closed graph with respect to  the given labelling. We always have, by semicontinuity of Betti numbers, $\beta_{i,j}(R/J_G)\leq \beta_{i,j}(R/\iin(J_G))$ (see \cite[Corollary 3.3.3]{HH-B}); thus $\pd(R/J_G)\le \pd(R/\iin(J_G))$, and $\reg(R/J_G)\le \reg(R/\iin(J_G))$. When $G$ is a closed graph, Ene, Herzog and Hibi conjectured in \cite{EHH} that $\beta_{i,j}(R/J_G)=\beta_{i,j}(R/\iin(J_G))$ and they proved the conjecture in the case of $J_G$ is Cohen-Macaulay. In fact, they had a strong believe for the truthfulness of the conjecture in the case of the extremal Betti numbers. Later, in \cite{EZ}, Ene and Zarojanu showed that $\reg(R/J_G) = \reg(R/\iin(J_G))$ for a closed graph $G$. Recently, Baskoroputro proved in \cite{B} that $\beta_{i,j}(R/J_G)=\beta_{i,j}(R/\iin(J_G))$, when $G$ is a closed graph and $j=i+1$, moreover this equality is also true for any $i,j\in\N$ when $\reg(R/J_G)\leq 2$. In this paper we are interested to study the conjecture of Herzog, Hibi and Ene for the extremal Betti numbers.

Assume that   $G_1,\ldots, G_s$ are connected components of $G$. Let $R=\kk[x_j,y_j\mid j\in V(G)]$ and $R_k:=\kk[x_j,y_j\mid  j\in V(G_k)]$ for all $1\le k\le s$. Then   $R/J_G\cong \otimes_{k=1}^s R_k/J_{G_k}$ and $R/\iin(J_G)\cong \otimes_{k=1}^s R_k/\iin(J_{G_k})$.  If $\beta_{i,j}(R_k/J_{G_k})=\beta_{i,j}(R_k/\iin(J_{G_k}))$ for all $k$, then $\beta_{i,j}(R/J_{G})=\beta_{i,j}(R/\iin(J_{G}))$. Furthermore, assume that  $G=G_1\cup G_2$  and  $G_1\cap G_2=\{v\}$, where $v$ is a cut point of $G$ and $G_1, G_2$ are two induced subgraphs of $G$ without  cut point. Let $p_i:=\pd(R_i/\iin(J_{G_i}))$ and $r_i=\reg (R_i/\iin(J_{G_i}))$ for $i=1,2$. If $\beta_{p_i,p_i+r_i}(R_i/\iin(J_{G_i}))$ is the unique extremal Betti number of $R_i/\iin(J_{G_i})$ then $\beta_{p,p+r}(R/J_{G})=\beta_{p,p+r}(R/\iin (J_{G}))\ne 0$, where $p:=p_1+p_2$ and $r:=r_1+r_2$; and $\pd(R/J_{G}) = \pd(R/\iin(J_{G}))=p$ and $\reg(R/J_{G}) = \reg(R/\iin(J_{G}))=r$ (see Proposition \ref{depth-extremal}).  Therefore, we will deal with the case $G$  is a connected closed graph without  cut point.  We will see that in order to define $G$ is enough to define a vector $\mu(G)=(\mu_1,\ldots,\mu_{n})$, where $\mu_1,\ldots,\mu_n$ is a decreasing sequence of non-negative integers,   $\mu_{n-2} = \mu_{n-1}=\mu_n=0$ and $\mu_j\le n-2-j$ for all $1\le j\le n-3$  (see Lemmas \ref{lambda} and \ref{lem15}). For a connected closed graph $G$ without cut point, we will study the connected graph $H$ such that the edge ideal $I(H)$ of $H$ is equal to ${\iin}(J_G)$.  The graph $H$ will be called  an initial-closed graph.   
We will focus on  the projective dimension and the extremal Betti numbers of $H$ in order to obtain the main result of this paper:

\medskip

\noindent {\bf Theorem   \ref{main-thm3}. } {\it Let $G$ be a connected closed graph with $\ell$ cut points $v_1,\ldots,v_{\ell}$. Assume that $G=G_1\cup\ldots\cup  G_{\ell+1}$ such that $G_i\cap G_{i+1}=\{v_i\}$ and $G_i\cap G_j = \emptyset$ for $i=1,\ldots,\ell$ and $i\ne j\ne i+1$. Let  $n_i=|V(G_i)|$ and  $\mu(G_i)= (\mu_{i1},\ldots,\mu_{in_i})$, where $s_i:=\min\{k-1\mid \mu_{ik} =0\}$. If for each $i$,    one of three following conditions is satisfied:
\begin{enumerate}
\item $s_{i}=0$,   or 
\item $0<\mu_{is_i}=\ldots=\mu_{i1}$,   or 
\item $0<\mu_{is_i}<\ldots<\mu_{i1}<n_i-s_i$;
\end{enumerate}
 then $R/\iin(J_G)$ and $R/J_G$  have an unique extremal Betti number, and they are equal. In particular, $\pd(R/J_G)=\pd(R/\iin(J_G))$. }

\medskip

The paper is organized as follows. In Section 1, we recall some basic notations and the terminologies from Graph theory. In Section 2,  we investigate structure of initial-closed graphs, and give an upper bound for the projective dimension of the edge ideal of  such graphs. We give also  a characterization for  the Cohen-Macaulay property of the initial-closed graphs. In Section 3, we give an algorithm  that allows us to compute the Betti numbers of the edge ideal of the  initial-closed graphs (see Theorem \ref{Betti}) and we also prove that for some families of initial-closed graphs the extremal Betti numbers of its edge ideal are unique. As  a consequence we obtain that this lower bound for projective dimension of initial-closed graphs, and furthermore in some cases this bound  is sharp.  In the last section,  we obtain that the conjecture of Hibi, Herzog and Ene for the extremal Betti numbers of the binomial edge ideal of a closed graph and its initial ideal are equal in some cases, which is the main result of this paper.

 \section{Connected closed graphs without cut point} 
 
 We now recall some terminologies from graph theory (see \cite{BM}). Let $G$ be a simple graph on the vertex set $V(G)$ and edge set $E(G)$. An edge $e\in E(G)$ connecting two vertices $x$ and $y$ will be also written as $\{x,y\}$.  In this case, it is said that $x$ and $y$ are
adjacent. A matching in a graph is a set of edges, no two of which meet a common vertex. An induced matching $M$ in a graph $G$ is a matching where no two edges of $M$ are adjacented by an edge of $G$.  The maximum size of an induced matching in $G$ is denoted $\im(G)$. The neighborhood of  $x$ in $G$ is the set  $$N_G(x) := \{y\in V(G)  \mid \{x,y\}\in E(G) \text{ for some }x\in V(G)\},$$
the close neighborhood of $x$ is $N_G[x]:=N_G(x)\cup \{x\}$.  The number $\deg_G(x):=|N_G(x)|$ is called the {\it degree} of  $x$ in  $G$.  For a subset $S$ of $V(G)$, we denote by $G[S]$ the induced subgraph of $G$ on the vertex set $S$, and denote $G\backslash S$ by $G[V(G)\backslash S]$. For each $x\in V(G)$, we write $G\backslash x$ (resp. $G_x$) stands for $G\backslash \{x\}$ (resp. $G\backslash N_G[x]$).  The subset $S$ of $V(G)$ is called {\it clique} of $G$ if any two vertices in $S$  are adjacent.    A point $v$ is a {\it cut point} of a connected graph $G$  if  $G\backslash  v$ is disconnected.   

A simple graph G on the vertex set $V(G)=\{1,\ldots,n\}$ is called {\it closed with respect to the given labeling}  if the following condition is satisfied:   whenever $\{i,j\}$  and $\{i, k\}$ are edges of $G$ and either $i < j$, $i < k$ or $i > j, i > k$, then $\{ j, k\}$ is also an edge of $G$. One calls a graph $G$ is {\it closed} if it is closed with respect to some labeling of its vertices.  On this paper, for any  closed graph we will  fix  the labelling on $V(G)$ such that the graph $G$ is a closed graph with respect to this labeling.

Now let $G$ be a connected closed graph. We define  $N_G^{>}(i):=\{j\in V(G)\mid i<j, \text{ and } \{i, j\} \in E(G)\}$, and $N_G^{<}(i):=\{j\in V(G)\mid i>j, \text{ and } \{i, j\}\in E(G)\}$. We denote $\deg_G^{>}(i):=|N_G^{>}(i)|$ and $\deg_G^{<}(i):=|N_G^{<}(i)|$.  Thus, $\deg_G(i)= \deg_G^{>}(i) + \deg_G^{<}(i)$.  
  
\begin{lem}\label{neibour} Let $G$ be a connected closed graph. Then 
\begin{enumerate}
\item {\rm  \cite[Proposition 2.2]{CE}} $N_G^{>}(i)$ is a clique and  equal to $[i+1,i+r]$, where $r=\deg_G^{>}(i)$,  
\item  If  $N_G^{>}(j) =[j+1,k]$ and $\{i,t\}\in E(G)$  with $i<j<k$, then $t\le k$. 
\end{enumerate}
 \end{lem}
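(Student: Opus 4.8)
Part (1) is quoted from \cite[Proposition 2.2]{CE}, so the plan is to take it as established and use it freely; all the real work is in part (2). For (2) I would argue by contradiction: if $t>k$, I aim to produce an edge that the hypothesis $N_G^{>}(j)=[j+1,k]$ explicitly forbids. First I would observe that if $t\le k$ there is nothing to prove, so the only situation to rule out is $t>k$, which together with $i<j<k$ yields the chain $i<j<k<t$.

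The key point I would exploit is that, although $\{i,j\}$ is \emph{not} among the hypotheses, it must nevertheless be an edge. Indeed, from $\{i,t\}\in E(G)$ with $i<t$ we get $t\in N_G^{>}(i)$, and by part (1) the set $N_G^{>}(i)$ equals the interval $[i+1,\,i+\deg_G^{>}(i)]$. Since $i<j<t$ and $t$ lies in this interval, the integer $j$ lies in it as well, so $\{i,j\}\in E(G)$. Now the vertex $i$ has two neighbours $j$ and $t$, both larger than $i$; applying the definition of a closed graph to $i$ with these two larger neighbours forces $\{j,t\}\in E(G)$. But $t>k$ means $t\notin N_G^{>}(j)=[j+1,k]$, i.e.\ $\{j,t\}\notin E(G)$, a contradiction. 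Hence $t\le k$.

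The hard part here is conceptual rather than computational: one must first recognize that $\{i,t\}$ can be promoted to the unlisted edge $\{i,j\}$ by invoking the interval description in part (1); once that edge is in hand, closedness finishes the argument in a single step. I expect no case analysis beyond the reduction to $t>k$, and no calculation. Connectivity enters only through its role in justifying part (1), which is why that statement is recorded first and used as a black box.
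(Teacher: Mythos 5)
Your proof is correct and follows essentially the same route as the paper's: assume $t>k$, use the interval description in part (1) to obtain the unlisted edge $\{i,j\}$, then apply closedness to $\{i,j\},\{i,t\}$ to force $\{j,t\}\in E(G)$, contradicting $N_G^{>}(j)=[j+1,k]$. The only cosmetic difference is that the paper states this more tersely, but the logical steps coincide exactly.
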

\begin{proof} Assume on the contrary that $t>k$. Since $\{i,t\}\in E(G)$, so by (1)   we have  $[i+1,\ldots,t]\subseteq N_G^{>}(i)$.  Thus $\{j, t\}\in E(G)$  because   $\{i, j\}, \{i,t\}\in E(G)$. This is a contradiction to  the assumption. 
\end{proof}

We associate to a closed graph $G$  a vector of  integers  $\mu(G) = (\mu_1,\ldots, \mu_n),$ where  $\mu_j:=n-j- \deg^{>}_G(j)$ for all $1\le j\le n$.  The  sequence of the  numbers $\mu_1,\ldots, \mu_n$ is a decreasing  sequence of non-negative integers  by the following lemma:

\begin{lem}     \label{lambda}  Let $G$ be a connected closed graph and $\mu(G)=(\mu_1,\ldots,\mu_{n})$. Then
    $0\le \mu_{i+1}\le \mu_i$ for all $1\le i\le n-1$. In particular, $\mu_n=\mu_{n-1}=0$. 
 \end{lem}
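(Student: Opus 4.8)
The plan is to translate the three assertions into statements about the function $j \mapsto \deg_G^{>}(j)$ and then to exploit the interval/clique description of $N_G^{>}$ furnished by Lemma \ref{neibour}(1). First I would record the trivial bound underlying nonnegativity: since $N_G^{>}(j) \subseteq \{j+1,\ldots,n\}$, one has $\deg_G^{>}(j) \le n-j$, so that $\mu_j = n-j-\deg_G^{>}(j) \ge 0$ for every $j$; this already gives $0 \le \mu_{i+1}$.

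For the monotonicity $\mu_{i+1}\le \mu_i$, unwinding the definitions shows it is equivalent to the inequality $\deg_G^{>}(i+1) \ge \deg_G^{>}(i)-1$. I would prove this directly from Lemma \ref{neibour}(1). Writing $r=\deg_G^{>}(i)$, that lemma gives $N_G^{>}(i)=[i+1,i+r]$ and asserts this set is a clique. If $r\ge 1$, then in particular $i+1,i+2,\ldots,i+r$ are pairwise adjacent, so $i+1$ is adjacent to each of $i+2,\ldots,i+r$; these are $r-1$ vertices all larger than $i+1$, whence $\deg_G^{>}(i+1)\ge r-1$ (the case $r=0$ being vacuous). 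Substituting back yields $\mu_{i+1}=n-(i+1)-\deg_G^{>}(i+1)\le n-i-\deg_G^{>}(i)=\mu_i$.

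It remains to verify $\mu_n=\mu_{n-1}=0$. The value $\mu_n=0$ is immediate since $N_G^{>}(n)=\emptyset$. The equality $\mu_{n-1}=0$ is the only place where connectedness of $G$ is genuinely used, and it is the step I expect to require the most care: one has $\mu_{n-1}=1-\deg_G^{>}(n-1)\in\{0,1\}$, so the point is precisely to show $\{n-1,n\}\in E(G)$. I would establish the slightly stronger fact that a connected closed graph contains the path $1,2,\ldots,n$, i.e. $\{i,i+1\}\in E(G)$ for all $1\le i\le n-1$. Indeed, if some $\{i,i+1\}$ were a non-edge, then connectedness forces an edge $\{a,b\}$ with $a\le i<i+1\le b$; applying Lemma \ref{neibour}(1) to $a$, the interval $N_G^{>}(a)=[a+1,a+\deg_G^{>}(a)]$ contains $b$, and hence contains both $i$ and $i+1$ when $a<i$ (and contains $i+1$ directly when $a=i$). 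Since $N_G^{>}(a)$ is a clique, this forces $\{i,i+1\}\in E(G)$, a contradiction. Taking $i=n-1$ gives $\deg_G^{>}(n-1)=1$ and therefore $\mu_{n-1}=0$, completing the argument.
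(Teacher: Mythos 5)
Your proof is correct, and it rests on the same foundation as the paper's---the interval-plus-clique description of $N_G^{>}(i)$ in Lemma \ref{neibour}(1)---but the two intermediate steps are carried out differently. For monotonicity, the paper writes $N_G^{>}(i+1)=[i+2,k]$ and invokes Lemma \ref{neibour}(2) to get $N_G^{>}(i)=[i+1,t]$ with $t\le k$, thereby bounding $\deg_G^{>}(i)$ from \emph{above} by $\deg_G^{>}(i+1)+1$; you instead bound $\deg_G^{>}(i+1)$ from \emph{below} by $\deg_G^{>}(i)-1$, using only the clique property of $N_G^{>}(i)$. The inequality is the same, but your direction is slightly more robust: Lemma \ref{neibour}(2) requires the hypothesis $i<i+1<k$, so the paper's citation is vacuous in the corner case $N_G^{>}(i+1)=\emptyset$, whereas your argument (if $r=\deg_G^{>}(i)\ge 1$ the clique forces $i+2,\ldots,i+r\in N_G^{>}(i+1)$; if $r=0$ there is nothing to prove) covers that case without comment. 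For the final claim $\mu_{n-1}=0$, the paper uses connectedness only to assert $N_G^{<}(n)\ne\emptyset$ and then applies the clique property of $N_G^{>}(t)$ for an edge $\{t,n\}$; your cut argument proving $\{i,i+1\}\in E(G)$ for every $i$ specializes at $i=n-1$ to exactly that reasoning (the crossing edge $\{a,b\}$ with $a\le n-1<n\le b$ is precisely an edge $\{a,n\}$), so there you establish a strictly stronger and independently useful fact---a connected closed graph contains the path $1,2,\ldots,n$---at essentially no extra cost.
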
 
\begin{proof}   For each $i$, by Lemma \ref{neibour}(1), $N_G^{>}(i)\subseteq [i+1,n]$. This yields $\deg_G^{>}(i)\le n-i$, and so $\mu_i\ge 0$.  By Lemma \ref{neibour}(1) again,  we assume $N_G^{>}(i+1) = [i+2,k]$. By Lemma \ref{neibour}(2),   $N_G^{>}(i) = [i+1,t]$, where $t\le k$. Thus   $\deg_G^{>}(i) = t-i\le k-i = \deg_G^{>}(i+1)+1.$ This means that $\mu_{i+1}\le \mu_i$.

Next in order to prove the last statement, it suffices to prove that $\mu_{n-1}=0$.     Indeed, since $G$ has no isolated vertices,  so $N_G^{<}(n)\ne \emptyset$. Thus, there exists   an edge  $\{t,n\}$ of $G$ with $t<n$. By Lemma \ref{neibour}(1),  $N_G^{>}(t)=[t+1,n]$ and $N_G^{>}(t)$ is a clique. Hence $\{n-1,n\}\in E(G)$, and thus $\deg_G^{>}(n-1)=1$. It implies that      $\mu_{n-1}=0$. 
 \end{proof}

 \begin{lem} \label{def-ini} Let   $G$ be a connected closed graph, and      $H'$  be a graph with edge ideal $\iin(J_G)$.  Then
\begin{enumerate}
\item $\deg_G^{>}(i) = \deg_{H'}(x_i)$ and $\deg_G^{<}(i) = \deg_{H'}(y_i)$ for all $i$. In particular,  $x_n$ and $ y_1$ are isolated vertices of $H'$.
\item   $H'$ is  a bipartite graph with  bipartition $(X,Y)$, where $X=\{x_1,\ldots,x_{n}\}$ and $Y=\{y_1,\ldots,y_n\}$, and    satisfies   three following conditions:
\begin{enumerate}
\item  if $\{x_i,y_j\}\in E(H')$, then $i<j$, 
\item  if  $\{x_i,y_j\}, \{x_i,y_k\}\in E(H')$ with  $i<j<k$, then  $\{x_j,y_k\}\in E(H')$, 
\item  if $\{x_i,y_k\},\{x_j,y_k\}\in E(H')$ with  $i<j<k$, then  $\{x_i,y_j\}\in E(H')$.
\end{enumerate}  
\item Each non-trivial connected component of $H'$  is a bipartite graph with   bipartition  $\{x_{i_1},\ldots, x_{i_{u-1}}\} \cup \{y_{i_2},\ldots,y_{i_{u}}\}$, where  $i_1<\ldots<i_u$.  
\item $H'\backslash \{x_n,y_1\}$ has no isolated vertices. 
\end{enumerate} 
  \end{lem}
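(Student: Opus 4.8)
The starting point is the Gröbner basis computation of \cite[Theorem 1.1]{HHHKR}: since $G$ is closed with respect to the given labelling, the quadratic binomials $x_iy_j - x_jy_i$ (with $\{i,j\}\in E(G)$, $i<j$) form a Gröbner basis of $J_G$, and as the leading term of such a binomial under the chosen lexicographic order $x_1>\cdots>x_n>y_1>\cdots>y_n$ is $x_iy_j$, we get $\iin(J_G)=(x_iy_j\mid \{i,j\}\in E(G),\ i<j)$. Consequently $H'$ is the graph on $\{x_1,\ldots,x_n,y_1,\ldots,y_n\}$ whose edges are exactly the pairs $\{x_i,y_j\}$ with $\{i,j\}\in E(G)$ and $i<j$; equivalently $\{x_i,y_j\}\in E(H')$ iff $j\in N_G^{>}(i)$. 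All four assertions will be read off from this dictionary together with Lemma \ref{neibour}.

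For (1) and the bipartiteness in (2) I would simply count: the neighbours of $x_i$ are the $y_j$ with $j\in N_G^{>}(i)$ and the neighbours of $y_j$ are the $x_i$ with $i\in N_G^{<}(j)$, so $\deg_{H'}(x_i)=\deg_G^{>}(i)$ and $\deg_{H'}(y_j)=\deg_G^{<}(j)$; taking $i=n$ and $j=1$ produces the two isolated vertices. Since every edge joins an $x$ to a $y$, the bipartition $(X,Y)$ is immediate and condition (a) is the defining inequality $i<j$. For (b), if $\{x_i,y_j\},\{x_i,y_k\}\in E(H')$ with $i<j<k$, then $j,k\in N_G^{>}(i)$, which is a clique by Lemma \ref{neibour}(1), so $\{j,k\}\in E(G)$ and hence $\{x_j,y_k\}\in E(H')$. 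For (c), if $\{x_i,y_k\},\{x_j,y_k\}\in E(H')$ with $i<j<k$, then $i,j\in N_G^{<}(k)$, and the closed condition applied with the larger index $k$ as common vertex shows $N_G^{<}(k)$ is also a clique, so $\{i,j\}\in E(G)$ and $\{x_i,y_j\}\in E(H')$.

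The heart of the lemma is (3), and this is where I expect the real work. The key mechanism is a \emph{fill-in} statement: if $\{x_a,y_b\}\in E(H')$ and $a<c<b$, then both $x_c$ and $y_c$ lie in the component of $x_a$. Indeed, since $N_G^{>}(a)=[a+1,a+\deg_G^{>}(a)]$ contains $b$ (Lemma \ref{neibour}(1)) it contains $c$, so $\{x_a,y_c\}\in E(H')$ and $y_c$ is joined to $x_a$; feeding the two edges $\{x_a,y_c\},\{x_a,y_b\}$ into part (b) yields $\{x_c,y_b\}\in E(H')$, so $x_c$ is joined to $y_b$, which is itself joined to $x_a$. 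Granting this, I would attach to each edge $\{x_a,y_b\}$ of a non-trivial component $C$ its index interval $[a,b]$; the fill-in shows $\{x_a,\ldots,x_{b-1}\}\cup\{y_{a+1},\ldots,y_b\}\subseteq C$, while condition (a) forbids reaching $y_a$ from the left endpoint or $x_b$ from the right endpoint. Since consecutive edges along a path in $C$ share a vertex and hence have overlapping index intervals, connectivity of $C$ forces the union of these intervals to be a single interval $[i_1,i_u]$; reading off which endpoints survive then gives exactly the bipartition $\{x_{i_1},\ldots,x_{i_{u-1}}\}\cup\{y_{i_2},\ldots,y_{i_u}\}$, indeed with the $i_m$ consecutive. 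Making the step ``the union of overlapping intervals is a single interval and covers every $x_m$ with $m<i_u$'' fully rigorous — rather than merely plausible from the examples — is the one place that needs care.

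Finally, for (4) I would first prove the auxiliary fact that $\{i,i+1\}\in E(G)$ for every $1\le i\le n-1$: if not, connectivity of $G$ supplies an edge $\{a,b\}$ with $a\le i<i+1\le b$, and since $N_G^{>}(a)=[a+1,a+\deg_G^{>}(a)]\supseteq[a+1,b]$ by Lemma \ref{neibour}(1) is a clique, it contains both $i$ and $i+1$ when $a<i$, or already has $i+1$ as its smallest element when $a=i$, forcing $\{i,i+1\}\in E(G)$, a contradiction. Hence in $H'$ every $x_i$ with $i<n$ has the neighbour $y_{i+1}$ and every $y_j$ with $j>1$ has the neighbour $x_{j-1}$; since by (1) the only isolated vertices of $H'$ are $x_n$ and $y_1$, deleting them leaves no isolated vertex.
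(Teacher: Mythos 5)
Your proposal is correct, and on the substantive points it is more detailed than the paper's own proof; the genuine divergence is in part (3). The paper dispatches (1) and (2) exactly as you do (it simply says they ``follow from the definition'' of a closed graph), and its proof of (4) uses the same mechanism as yours, packaged as a contradiction: if $x_j$ ($j\ne n$) were isolated, connectivity of $G$ yields an edge $\{t,k\}$ with $t<j<k$, and the interval/clique structure of $N_G^{>}(t)$ from Lemma \ref{neibour}(1) forces $\{j,k\}\in E(G)$; your auxiliary fact $\{i,i+1\}\in E(G)$ is the positive reformulation of that argument. For (3), however, the paper never returns to $G$: it writes the bipartition of a component with two independent index sequences and asserts, with no further justification, that conditions (a)--(c) of (2) force $v=u$ and $j_k=i_{k+1}$. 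You instead go back to Lemma \ref{neibour}(1) and prove a fill-in lemma, which yields the strictly stronger conclusion that the index set of each non-trivial component is an interval of \emph{consecutive} integers. That stronger form is what the paper actually uses later (the bipartition $\{x_1,\ldots,x_{n-1}\}\cup\{y_2,\ldots,y_n\}$ of an initial-closed graph, and $i_1=1$, $j_l=n$ in Lemma \ref{intr_depth}), so your route buys more, at the cost of invoking the closedness of $G$ rather than only the abstract conditions in (2).

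The step you flag in (3) is indeed the only one needing an argument, but it is not a genuine gap: it closes in a few lines. Two edges of $H'$ sharing a vertex have index intervals whose intersection contains two \emph{consecutive} integers (sharing $x_a$ gives $\{a,a+1\}$; sharing $y_b$ gives $\{b-1,b\}$), so single-point overlaps never occur along a path. Now suppose $m$ and $m+1$ both lie in the union of the edge-intervals of a component $C$ but no single edge-interval contains both; then every edge-interval of $C$ lies in $[1,m]$ or in $[m+1,n]$, both kinds occur, and along a path in $C$ joining an edge of the first kind to one of the second kind some two consecutive edges would have disjoint intervals, contradicting the overlap property. Hence some edge-interval $[a,b]$ satisfies $a\le m<m+1\le b$, and your fill-in lemma places $x_m$ and $y_{m+1}$ in $C$. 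Combined with the observation from condition (a) that the minimal index of $C$ occurs only as an $x$-vertex and the maximal only as a $y$-vertex, this gives exactly the stated bipartition, with consecutive indices.
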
  
\begin{proof}  The  statements (1) and (2) follow from the definition of closed graph $G$. 

\medskip 

(3)  Since $H'$ is a bipartite graph, each non-trivial connected component of $H'$  is also a bipartite graph.  We assume its  bipartition is  $\{x_{i_1},\ldots, x_{i_{u-1}}\} \cup \{y_{j_1},\ldots,y_{j_{v-1}}\}$, where  $i_1<\ldots<i_{u-1}$ and $j_1<\ldots<j_{v-1}$. Since $H'$ satisfies the three conditions of (2), $v=u, \text{ and }    j_{k} = i_{k+1}, \text{  for all }  1\le k\le u-1$. 
 
\medskip 
 
(4)  Assume on the  contrary that  $x_j$ is an isolated vertex of $H'$ ($j\ne n$).  Since $G$ is connected, so there exists  $\{t,k\}\in E(G)$ such that   $t<j<k$. By Lemma \ref{neibour}(1), $[t+1,k]\subseteq  N_G^{>}(t)$ and   
$N_G^{>}(t)$ is a clique. Then $\{j,k\}\in E(G)$, which is a  contradiction.   

Similar to the proof of above argument, we conclude $y_2,\ldots,y_n$ are  not also isolated vertices of $H'\backslash \{x_n,y_1\}$, as required. 
\end{proof}

Let $G_1$ and $G_2$ be two graphs. We set $G:=G_1\cup G_2$  is  a  graph with $V(G) = V(G_1)\cup V(G_2)$ and $E(G) = E(G_1)\cup E(G_2)$.

\begin{lem} \label{def-ini-clo}
Let   $G$ be  a connected closed graph, and  $H'$  be a graph with edge ideal $\iin(J_G)$.  Then    $G$ has no cut point if and only if   $H'\backslash \{x_n,y_1\}$  is connected. 
\end{lem}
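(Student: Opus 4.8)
The plan is to translate the topological condition ``$G$ has no cut point'' into a purely local numerical condition on the upper-degrees $\deg_G^{>}$, and then to read off the connectivity of $H'\backslash\{x_n,y_1\}$ from that condition using the bipartite description of $H'$ in Lemma \ref{def-ini}.

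First I would characterize the cut points of a connected closed graph. I claim that an interior vertex $v$ (i.e. $2\le v\le n-1$) is a cut point if and only if no edge of $G$ \emph{crosses} $v$, meaning there is no $\{i,j\}\in E(G)$ with $i<v<j$, and that the endpoints $1$ and $n$ are never cut points. The endpoint assertion follows from Lemma \ref{neibour}(1): $N_G(1)=N_G^{>}(1)$ is a clique, so it lies in a single component of $G\backslash 1$, whence $1$ cannot meet two components (and $n$ is symmetric). For an interior $v$, if no edge crosses $v$ then $\{1,\ldots,v-1\}$ and $\{v+1,\ldots,n\}$ have no edge between them, so deleting $v$ disconnects $G$; conversely, a crossing edge $\{i,j\}$ with $i<v<j$ makes $[i+1,j]\subseteq N_G^{>}(i)$ a clique by Lemma \ref{neibour}(1), so it survives in $G\backslash v$ and in fact joins all the vertices it crosses, so $v$ is not a cut point. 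The next step is to reformulate ``some edge crosses $v$'': again by Lemma \ref{neibour}(1) any crossing edge $\{i,j\}$ with $i<v<j$ forces $\{v-1,j\}\in E(G)$ (since $v-1,j\in N_G^{>}(i)$), so a crossing edge over $v$ exists iff $\deg_G^{>}(v-1)\ge 2$. Letting $i=v-1$ run over $1,\ldots,n-2$, I obtain the clean equivalence
$$ G \text{ has no cut point} \iff \deg_G^{>}(i)\ge 2 \ \text{ for all } 1\le i\le n-2. $$

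With this in hand, the connectivity of $H'\backslash\{x_n,y_1\}$ becomes bookkeeping on the bipartite graph $H'$, whose edges are exactly the pairs $\{x_i,y_j\}$ with $\{i,j\}\in E(G)$ and $i<j$. For the forward direction, assuming $\deg_G^{>}(i)\ge 2$ for $1\le i\le n-2$, a crossing edge over $v=i+1$ produces (via Lemma \ref{neibour}(1)) a vertex $y_b$ with $i+2\le b\le n$ that is adjacent to both $x_i$ and $x_{i+1}$; since $b\ge 3$ this $y_b$ survives in $H'\backslash\{x_n,y_1\}$, so $x_i$ and $x_{i+1}$ lie in one component. Chaining over $i=1,\ldots,n-2$ puts $x_1,\ldots,x_{n-1}$ into a single component, and since $H'\backslash\{x_n,y_1\}$ has no isolated vertices by Lemma \ref{def-ini}(4), every $y_j$ with $2\le j\le n$ is adjacent to some $x_i$ with $i<j\le n$, hence attaches to that component; thus $H'\backslash\{x_n,y_1\}$ is connected. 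For the converse I argue contrapositively: if $\deg_G^{>}(i_0)=1$ for some $1\le i_0\le n-2$, set $v=i_0+1$, so no edge crosses $v$. Then every edge $\{x_a,y_b\}$ of $H'$ has $b\le v$ or $a\ge v$, so the sets $A=\{x_1,\ldots,x_{v-1}\}\cup\{y_2,\ldots,y_v\}$ and $B=\{x_v,\ldots,x_{n-1}\}\cup\{y_{v+1},\ldots,y_n\}$ partition the vertex set of $H'\backslash\{x_n,y_1\}$ into two nonempty parts with no edge between them, so $H'\backslash\{x_n,y_1\}$ is disconnected.

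The main obstacle is the first step: pinning down the cut-point characterization and, in particular, the reformulation ``a crossing edge over $v$ exists $\iff \deg_G^{>}(v-1)\ge 2$'', which is exactly where the closedness of $G$ (through the clique-interval structure of Lemma \ref{neibour}(1)) is essential. Once the numerical criterion $\deg_G^{>}(i)\ge 2$ for $1\le i\le n-2$ is in place, both implications for $H'$ reduce to the elementary connectivity arguments sketched above.
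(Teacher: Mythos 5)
Your overall strategy --- replacing ``no cut point'' by the numerical criterion $\deg_G^{>}(i)\ge 2$ for all $1\le i\le n-2$ and then reading the connectivity of $H'\backslash\{x_n,y_1\}$ off the bipartite structure --- is sound and genuinely different from the paper's proof. The paper never isolates a degree criterion: it shows that a cut point $v$ forces $\deg_{G_1}^{>}(v)=\deg_{G_2}^{<}(v)=0$ and then identifies the components of $H'\backslash\{x_n,y_1\}$ containing $y_v$ and $x_v$ via the bipartition description of Lemma \ref{def-ini}(3); conversely it reads a cut point off the interlocking index sets of two components. Your steps after the characterization --- the equivalence ``a crossing edge over $v$ exists iff $\deg_G^{>}(v-1)\ge 2$'', the chaining of $x_1,\ldots,x_{n-1}$ through the common neighbors $y_{i+2}$, and the explicit separation into $A$ and $B$ --- are all correct, and your criterion in effect also proves Lemma \ref{lem15} directly, which the paper only obtains as a corollary of this lemma.

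However, one step of your cut-point characterization has a real gap: the claim that a crossing edge $\{i,j\}$ with $i<v<j$ implies $v$ is not a cut point. Your justification shows only that the clique on $[i,j]$ furnished by Lemma \ref{neibour}(1) stays connected after deleting $v$; it does not show that the rest of $G\backslash v$ attaches to this clique, which is what ``not a cut point'' requires (a priori a vertex outside $[i,j]$ might reach $[i,j]$ only through $v$). This implication is exactly what your contrapositive argument for ``$H'\backslash\{x_n,y_1\}$ connected $\Rightarrow$ no cut point'' rests on (cut point $\Rightarrow$ no crossing edge $\Rightarrow$ $\deg_G^{>}(v-1)\le 1$ $\Rightarrow$ disconnected), so it cannot be waved through. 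The repair is short with tools you already invoke: since $G$ is connected, every component of $G\backslash v$ contains a neighbor of $v$; closedness makes both $N_G^{<}(v)$ and $N_G^{>}(v)$ cliques, so $G\backslash v$ has at most two components, one containing $N_G^{<}(v)$ and one containing $N_G^{>}(v)$; and Lemma \ref{neibour}(1) puts $i\in N_G^{<}(v)$ (as $v\in[i+1,j]\subseteq N_G^{>}(i)$) and $j\in N_G^{>}(v)$ (as $v$ and $j$ lie in the clique $N_G^{>}(i)$), so the crossing edge merges those two components and $G\backslash v$ is connected. Alternatively, first prove $\{k,k+1\}\in E(G)$ for every $k$ (connectivity plus Lemma \ref{neibour}(1)), so that $[1,v-1]$ and $[v+1,n]$ are each connected by consecutive edges and the crossing edge joins them. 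With this inserted, your proof is complete.
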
 
\begin{proof} 
For the  sufficient part, assume  that there exists a cut point of $G$, say $v$. We may assume $G=G_1\cup  G_2$,  $V(G_1)\cap V(G_2)=\{v\}$ and  $E(G) = E(G_1)\cup E(G_2)$, where $G_1$ and $G_2$ are non-trivial subgraphs of $G$.  Note that $v\ne 1, n$ because $\deg_G^{>}(v)>0$ and $\deg_G^{<}(v)>0$.    Let $u_1\in N_{G_1}(v)$. Without loss of the generality, we assume $u_1<v$. We claim that   $\deg_{G_1}^{>}(v) =  \deg_{G_2}^{<}(v) = 0$. In fact,  let  $u_2\in N_{G_2}(v)$. If $u_2<v$, then $\{u_1, u_2\}\in E(G)$ because $\{u_1,v\}, \{u_2,v\}\in E(G)$ and $G$ is closed graph. It is impossible because $E(G)= E(G_1) \cup E(G_2)$. Thus, $u_2>v$, and so $u_2\in N_{G_2}^{>}(v)$. On the other hand, $N_{G_2}(v)=N_{G_2}^{>}(v)$, and so $N_{G_2}^{<}(v) = \emptyset$. This yields  $\deg_{G_2}^{<}(v) = 0$.  Simililar the above argument,  $\deg_{G_1}^{>}(v)  = 0$,  as claimed.

We set $H_1$ (resp. $H_2$) is  a  connected induced subgraph  of $H'$ containing   $y_v$ (resp. $x_v$).  By Lemma \ref{def-ini},  
$H_1$ (resp. $H_2$) is bipartite  with  bipartition $\{x_{i_1},\ldots,x_{i_{u-1}}\}\cup \{y_{i_2},\ldots,y_{i_{u}}\}$, where $i_1<\ldots<i_u$ (resp.   $\{x_{j_1},\ldots,x_{j_{v-1}}\}\cup \{y_{j_2},\ldots,y_{j_{v}}\}$,  where $j_1<\ldots<j_v$). Hence $\{i_1,\ldots,i_u\} \subseteq V(G_1)$ and  $\{j_1,\ldots,j_v\} \subseteq V(G_2)$.  By the above claim, we imply that  $i_u=v=j_1$. Thus,   $H_1$ and $H_2$ are connected components of $H'\backslash \{x_n,y_1\}$.

Now we prove  the necessary part. Suppose that  $H'\backslash \{x_n, y_1\}$ is disconnected. Then we may assume $H_1$ and   $H_2$ are two connected components of $H'\backslash \{x_n, y_1\}$.  By Lemma \ref{def-ini},  $H_1$ (resp. $H_2$) is bipartite with  bipartition  $\{x_{i_1},\ldots,x_{i_{u-1}}\}\cup \{y_{i_2},\ldots,y_{i_{u}}\}$ where  $i_1<\ldots<i_u$ (resp. $\{x_{j_1},\ldots,x_{j_{v-1}}\}\cup \{y_{j_2},\ldots,y_{j_{v}}\}$ where $j_1<\ldots<j_v$). We set $G_1$ (resp. $G_2$) is  an induced subgraph of $G$ on $\{i_1,\ldots,i_u\}$ (resp. $\{j_1,\ldots,j_v\}$). As $G$ is connected,   $V(G_1)\cap V(G_2)\ne \emptyset$. Then we may assume 
$i_1<\ldots<i_{u}=j_1<\ldots<j_{v}.$  This yields, $i_u$ is a cut point of $G$.
\end{proof}

\begin{lem} \label{lem15} If $G$ is a  connected closed graph without  cut point, then  $\mu_i\le n-i-2$ for all $1\le i\le n-2$. In particular,  $\mu_{n-2}=0$. 
\end{lem}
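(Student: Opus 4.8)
The plan is to reformulate $\mu_i\le n-i-2$ as a lower bound on $\deg_G^{>}(i)$ and then force a cut point when that bound fails. Since $\mu_i=n-i-\deg_G^{>}(i)$, the inequality $\mu_i\le n-i-2$ is equivalent to $\deg_G^{>}(i)\ge 2$. Using Lemma~\ref{neibour}(1) I write $N_G^{>}(i)=[i+1,k_i]$ with $k_i=i+\deg_G^{>}(i)$, so that $\mu_i=n-k_i$; under this change of variable Lemma~\ref{lambda} (the $\mu_j$ form a non-increasing sequence) becomes the monotonicity $k_1\le k_2\le\cdots\le k_n$ of the right endpoints of these intervals. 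Thus it suffices to show $k_i\ge i+2$ for all $1\le i\le n-2$.

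I would argue by contradiction, assuming $k_i\le i+1$ for some $i$ with $1\le i\le n-2$. By the monotonicity of the $k_a$, every $a\le i$ satisfies $k_a\le k_i\le i+1$, which means that the up-neighbours of any vertex $a\le i$ all lie in $[a+1,i+1]$; equivalently, no edge of $G$ joins a vertex of $\{1,\ldots,i\}$ to a vertex of $\{i+2,\ldots,n\}$. Both sets are non-empty because $1\le i$ and $i+2\le n$. Removing the vertex $i+1$ (note $i+1\le n-1$) therefore leaves the two non-empty sets $\{1,\ldots,i\}$ and $\{i+2,\ldots,n\}$ with no edge between them, so $G\backslash (i+1)$ is disconnected and $i+1$ is a cut point of $G$, contradicting the hypothesis. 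Hence $k_i\ge i+2$, i.e. $\deg_G^{>}(i)\ge 2$ and $\mu_i\le n-i-2$, for every $1\le i\le n-2$. Taking $i=n-2$ gives $\mu_{n-2}\le 0$, and since $\mu_{n-2}\ge 0$ by Lemma~\ref{lambda}, we conclude $\mu_{n-2}=0$.

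The only place the no-cut-point hypothesis enters is in deriving the contradiction, and the real engine of the proof is the translation of Lemma~\ref{lambda} into the monotonicity $k_1\le\cdots\le k_n$: this is exactly what guarantees that every edge respects the partition of the vertices at $i+1$. Once that monotonicity is recorded, the cut-point conclusion is immediate, so I expect the only work to be careful bookkeeping with the interval endpoints rather than any genuine obstacle.
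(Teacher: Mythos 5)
Your proof is correct, but it takes a different route from the paper's. The paper deduces the bound by passing to the initial-closed graph: by Lemma \ref{def-ini-clo}, the absence of cut points in $G$ is equivalent to connectedness of $H'\backslash\{x_n,y_1\}$, and from that connectedness (together with the bipartite structure in Lemma \ref{def-ini}) it reads off $\deg_{H'}(x_i)\ge 2$, hence $\deg_G^{>}(i)\ge 2$, for $1\le i\le n-2$. You instead stay entirely inside $G$: you rewrite $\mu_i\le n-i-2$ as $\deg_G^{>}(i)\ge 2$, encode Lemma \ref{lambda} as monotonicity of the right endpoints $k_a=a+\deg_G^{>}(a)$ of the intervals $N_G^{>}(a)=[a+1,k_a]$ from Lemma \ref{neibour}(1), and show that $k_i\le i+1$ forces every edge leaving $\{1,\ldots,i\}$ to land in $\{i+1\}$, so that $i+1$ is an explicit cut point --- a contradiction. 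Your argument is essentially the ``necessary part'' of Lemma \ref{def-ini-clo} translated back into the language of $G$, so it is self-contained and more elementary: it needs neither the initial ideal nor the dictionary between $G$ and $H'$, and it exhibits the cut point concretely. What the paper's route buys is economy within its own development --- Lemmas \ref{def-ini} and \ref{def-ini-clo} are needed anyway for the rest of the paper, so the degree bound falls out in two lines once that machinery is in place, and the proof keeps attention on the graph $H'$ that is the actual object of study in the later sections.
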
 
\begin{proof} Let $H'$ be a graph with edge ideal $\iin(J_G)$. By Lemma \ref{def-ini-clo},  $H'\backslash \{x_n,y_1\}$ is a connected graph.  Thus,  $\deg_{H'}(x_{i})\ge 2$  for all $1\le i\le n-2$.  By 
Lemma \ref{def-ini}(1), $\deg_G^{>}(i)\ge 2$.  This implies  $\mu_i\le n-i-2$.
 \end{proof}

Let $G$ be a connected closed graph without cut point. Then the  connected graph $H:=H'\backslash \{x_n,y_1\}$ in   the assertion of Lemma \ref{def-ini-clo}  is so-called {\it initial-closed} graph. The such graph is a bipartite graph with   bipartition $(X,Y)$, where $X=\{x_1,\ldots,x_{n-1}\}$ and $Y=\{y_2,\ldots,y_{n}\}$ and  $n\ge 2$.  We associated to the initial-closed graph $H$ the  vector    $\mu(H):=(\mu_1,\ldots,\mu_{n-1}),$  where $\mu_i:=    n-i-\deg_H(x_i)$. By Lemmas \ref{lambda}, \ref{def-ini} and \ref{lem15},  $\mu_1\ge \ldots\ge \mu_{n-3}\ge \mu_{n-2} =  \mu_{n-1} =0$, and $\mu_i\le n-i-2$ for $1\le i\le n-3$.

\begin{exam}
The  graph $G$ in Figure \ref{fig1}  is a connected closed graph without cut point with $\mu(G) = (3,1,0,0,0,0)$, and  its initial-closed graph $H$ with $\mu(H)= (3,1,0,0,0)$.    
\begin{center}
\begin{figure}[H]
\begin{tabular}{cc}
 \includegraphics[scale=0.5]{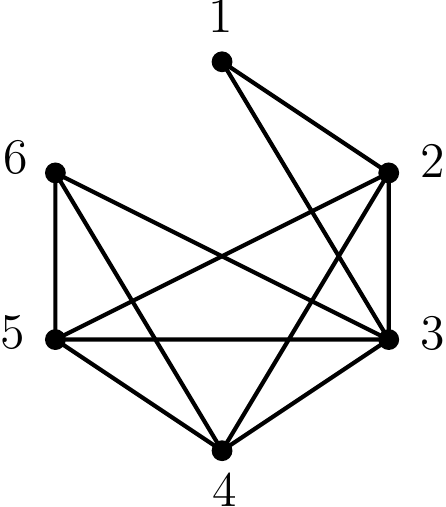}  \qquad    &   \qquad
 \includegraphics[scale=0.7]{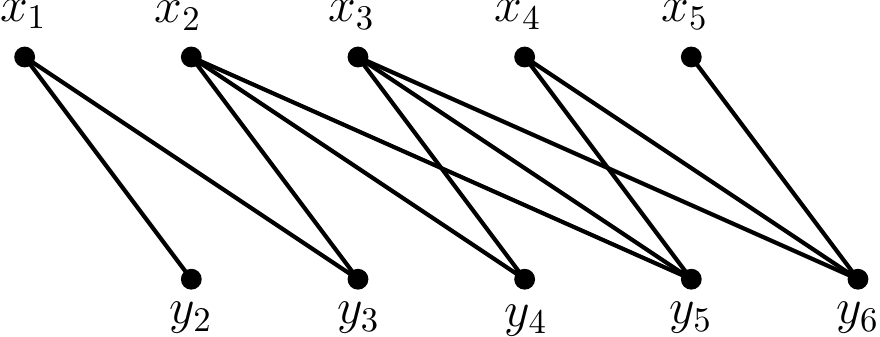} \\
 \end{tabular}
 \caption{Closed graph $G$ and its initial-closed graph $H$} \label{fig1}
 \end{figure} 
\end{center}
\end{exam} 
   
\begin{lem}\label{intr_depth}   Assume   $G=G_1\cup G_2$ is a connected closed graph and $V(G_1)\cap V(G_2)=\{v\}$, where $v$ is a  cut point of $G$, and  $G_1, G_2$ are subgraphs without cut point of $G$.  Let   $H'$ be  a graph with edge ideal $\iin(J_G)$, and let $H_1$ (resp.  $H_2$)  be an initial-closed graph of $G_1$ (resp.   $G_2$).  Then  the connected components of   $H'\backslash \{x_n,y_1\}$ are $H_1$ and $H_2$.   
 \end{lem}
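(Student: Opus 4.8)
The plan is to trace the combinatorics of the quadratic Gr\"obner basis through the decomposition $G=G_1\cup G_2$. First I would record that, since $G$ is closed, $\iin(J_G)=(x_iy_j\mid\{i,j\}\in E(G),\ i<j)$, so the edges of $H'$ correspond bijectively to the edges of $G$ via $\{i,j\}\mapsto\{x_i,y_j\}$ with $i<j$. Because $V(G_1)\cap V(G_2)=\{v\}$ and $E(G)=E(G_1)\cup E(G_2)$, every edge of $G$ lies entirely in $G_1$ or entirely in $G_2$, so the edges of $H'$ split into the ``$G_1$-edges'' and the ``$G_2$-edges''. The structural input I would reuse is the claim established inside the proof of Lemma \ref{def-ini-clo}: for the cut point $v$ one has $\deg_{G_1}^{>}(v)=\deg_{G_2}^{<}(v)=0$, whence $v=\max V(G_1)=\min V(G_2)$; and since $G$ is labelled on $\{1,\dots,n\}$ with $v$ internal, also $1=\min V(G_1)$ and $n=\max V(G_2)$.

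Next I would identify the two pieces explicitly. Let $A$ (resp. $B$) be the subgraph of $H'$ spanned by the $G_1$-edges (resp. $G_2$-edges). By the edge correspondence, the edges of $A$ are exactly the monomial generators of $\iin(J_{G_1})=(x_iy_j\mid\{i,j\}\in E(G_1),\ i<j)$, and likewise $B$ realizes $\iin(J_{G_2})$. Applying Lemma \ref{def-ini}(1) to the connected closed graphs $G_1$ and $G_2$, the only isolated vertices of the bipartite graph of $\iin(J_{G_1})$ are $x_{\max V(G_1)}=x_v$ and $y_{\min V(G_1)}=y_1$, while those of $\iin(J_{G_2})$ are $x_{\max V(G_2)}=x_n$ and $y_{\min V(G_2)}=y_v$. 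Thus the initial-closed graph $H_1$ is $A$ with $x_v,y_1$ deleted and $H_2$ is $B$ with $x_n,y_v$ deleted.

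I would then cross-check these deletions against passing from $H'$ to $H'\setminus\{x_n,y_1\}$. In $H'$ the vertex $x_v$ carries the $G_2$-edges (since $\deg_G^{>}(v)=\deg_{G_2}^{>}(v)>0$), so $x_v\in B$ and is not isolated, and dually $y_v\in A$; meanwhile $x_n$ and $y_1$ are precisely the two isolated vertices of $H'$ by Lemma \ref{def-ini}(1). Hence deleting $\{x_n,y_1\}$ removes the isolated $x$-vertex of the $G_2$-part and the isolated $y$-vertex of the $G_1$-part, and what survives is exactly $A$ together with $B$, whose vertex sets equal those of $H_1$ and $H_2$. These vertex sets are disjoint---$A$ uses $x$-labels below $v$ and $y$-labels at most $v$, while $B$ uses $x$-labels at least $v$ and $y$-labels above $v$---so no edge of $H'$ joins $A$ to $B$. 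Since $G_1$ and $G_2$ have no cut point, Lemma \ref{def-ini-clo} guarantees that $H_1=A$ and $H_2=B$ are each connected, so they are precisely the two connected components of $H'\setminus\{x_n,y_1\}$.

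The main obstacle is the structural fact $v=\max V(G_1)=\min V(G_2)$: it is what forces $x_v$ and $y_v$ into different pieces and lets $v$ serve simultaneously as the top label of $G_1$ and the bottom label of $G_2$, so that the restricted initial ideals $\iin(J_{G_1})$ and $\iin(J_{G_2})$ reassemble $\iin(J_G)$ without interference. This is exactly the closedness-plus-cut-point computation already carried out in Lemma \ref{def-ini-clo}, so the remainder is the bookkeeping above with the edge--vertex correspondence and Lemma \ref{def-ini}.
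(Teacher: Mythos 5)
Your proposal is correct and takes essentially the same route as the paper's own proof: both arguments rest on the cut-point computation $\deg_{G_1}^{>}(v)=\deg_{G_2}^{<}(v)=0$ (so that $v=\max V(G_1)=\min V(G_2)$, with $1=\min V(G_1)$ and $n=\max V(G_2)$), on the bipartition/isolated-vertex structure of Lemma \ref{def-ini}, and on the connectedness of $H_1$ and $H_2$ supplied by Lemma \ref{def-ini-clo} applied to the cut-point-free pieces $G_1,G_2$. You merely carry out more explicitly the bookkeeping (the edge correspondence between $E(G)$ and $E(H')$, and the disjointness of the two vertex sets) that the paper leaves implicit after establishing $i_k=v=j_1$.
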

\begin{proof}    By the assumption, $G_1$ and $G_2$ are also closed graphs.  From Lemmas \ref{def-ini} and  \ref{def-ini-clo}, $H_1$  (resp.  $H_2$) is  a connected bipartite graph with bipartition $\{x_{i_1},\ldots,x_{i_{k-1}}\}\cup \{y_{i_2},\ldots,y_{i_{k}}\}$, where $i_1<\ldots<i_k$ (resp. $\{x_{j_1},\ldots,x_{j_{l-1}}\}\cup \{y_{j_2},\ldots,y_{j_{l}}\}$, where $j_1<\ldots<j_l$).

Let $u_1\in N_{G_1}(v)$. Without  loss of the generality, we may assume $u_1<v$. Thus, $\deg_{G_1}^{>}(v)=\deg_{G_2}^{<}(v)=0$, and so  $i_k=v=j_1$. Moreover, since $V(G)= V(G_1)\cup V(G_2)$, so $i_1=1$ and $j_l=n$. Thus,  all  connected components of $H'\backslash \{x_n,y_1\}$ are $H_1$ and $H_2$.  
 \end{proof}

A simplicial complex $\Delta$ on the vertex set $V(\Delta) := \{1,\ldots ,n\}$ is a collection of subsets of $V(\Delta)$ such that $F\in \Delta$ whenever $F\subseteq F'$ for some
$F'\in \Delta$. Given  any field $\kk$, we define the Stanley-Reisner ideal  $I_{\Delta}$ in $\kk[V(\Delta)]:=\kk[x_1,\ldots,x_n]$ of $\Delta$ to be the squarefree monomial ideal
$$I_{\Delta}:= (x_{j_1}\ldots x_{j_s}\mid j_1<\ldots<j_s \text{ and } \{j_1,\ldots,j_s\}\in \Delta).$$

For a subset $W$ of $V(\Delta)$ the restriction of $\Delta$ on $W$ is the subcomplex $\Delta[W]:=\{F\in \Delta\mid F\subseteq W\}$.  We denote by $\widetilde{H}_j(\Delta; \kk)$  is reduced homology group of a simplicial complex $\Delta$ over $\kk$. A very useful result to compute the graded Betti numbers of the Stanley- Reisner ideal of simplicial complex is the so-called Hochster formula  (c.f. \cite[Theorem 8.1.1]{HH-B}). 
 
$$\beta_{i,j}(\kk[V(\Delta)]/I_{\Delta})= \sum_{W\subseteq V(\Delta), |W|=j} \dim_{\kk} \widetilde{H}_{j-i-1}(\Delta[W];\kk). $$

To each finite simple graph  $G$  with vertex set $V(G)=\{x_1,\ldots,x_n\}$ and  edge set $E(G)$, one associates the edge ideal  $I(G)$ of the  polynomial ring $\kk[V(G)]:=\kk[x_1,\ldots,x_n]$ which is generated by all monomials $x_ix_j$ such that $\{x_i, x_j\} \in E(G)$.   
Let $\Delta(G)$ be the set of all independent sets of $G$. Then, $\Delta(G)$ is a simplicial complex, called the independence complex of $G$. We can see that $I_{\Delta(G)} = I(G)$.  Note that $\Delta(G[W]) = \Delta(G)[W]$ for some  $W\subseteq V(G)$.  Therefore, Hochster formula  is also applied to compute Betti numbers of edge ideals. We write $\beta_{i,j}(G)$, $\pd(G)$,  and $\reg(G)$ as shorthand for  $\beta_{i,j}(\kk[V(G)]/I(G))$, $\pd(\kk[V(G)]/I(G))$,  and $\reg(\kk[V(G)]/I(G))$, respectively.

Let $\Delta_1$ and $\Delta_2$ be  simplicial complexes on the disjoint vertex sets $V_1$ and $V_2$, respectively. Define the {\it join}  on the vertex $V_1\cup V_2$ to be $\Delta_1*\Delta_2:=\{\sigma \cup \tau\mid \sigma \in \Delta_1, \tau\in \Delta_2\}$.  If $H_1$ and $H_2$ are two connected components of a graph $H$, then $\Delta(H) = \Delta(H_1)*\Delta(H_2)$.

\begin{prop} \label{depth-extremal}   Assume   $G=G_1\cup G_2$ is a connected closed graph and $V(G_1)\cap V(G_2)=\{v\}$, where $v$ is a  cut point of $G$, and  $G_1, G_2$ are subgraphs without cut point of $G$. Let $p_i:=\pd(R_i/\iin (J_{G_i}))$ and $r_i:=\reg(R_i/\iin (J_{G_i}))$ for  $i=1,2$,  where $R:=\kk[x_k,y_k \mid k\in V(G)]$ and $R_i:=\kk[x_k,y_k \mid k\in V(G_i)]$.   If   $\beta_{p_i,p_i+r_i}(R_i/\iin(J_{G_i}))\ne 0$ for all $i=1,2$, then  $\beta_{p,p+r}(R/J_{G})=\beta_{p,p+r}(R/\iin (J_{G}))\ne 0$, where $p=p_1+p_2$ and $r=r_1+r_2$. In particular, $\reg(R/J_G) = \reg(R/\iin(J_G))=r$ and $\pd(R/J_G) = \pd(R/\iin(J_G))=p$.   
\end{prop}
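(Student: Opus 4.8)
The plan is to determine the invariants of $R/\iin(J_G)$ exactly, and then transfer the single corner Betti number to $R/J_G$ using only the invariance of the Hilbert function under Gröbner degeneration. This sidesteps any attempt to resolve $J_G$ directly, which is awkward precisely because $J_{G_1}$ and $J_{G_2}$ genuinely share the variables $x_v$ and $y_v$.

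First I would analyze $\iin(J_G)$ combinatorially. By Lemma \ref{intr_depth} the connected components of $H'\backslash\{x_n,y_1\}$ are exactly the initial-closed graphs $H_1$ and $H_2$, and by Lemma \ref{def-ini}(1) the vertices $x_n,y_1$ are isolated in $H'$; hence $\beta_{i,j}(R/\iin(J_G))=\beta_{i,j}(H_1\sqcup H_2)$ and likewise $\beta_{i,j}(R_i/\iin(J_{G_i}))=\beta_{i,j}(H_i)$. The key point, read from the proof of Lemma \ref{intr_depth}, is that $V(H_1)\cap V(H_2)=\emptyset$: since $i_k=v=j_1$, we have $y_v\in V(H_1)$ but $x_v\notin V(H_1)$, while $x_v\in V(H_2)$ but $y_v\notin V(H_2)$, and all remaining indices lie in $V(G_1)\backslash\{v\}$ and $V(G_2)\backslash\{v\}$ respectively. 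Therefore
$$\kk[V(H_1\sqcup H_2)]/I(H_1\sqcup H_2)\cong \kk[V(H_1)]/I(H_1)\otimes_{\kk}\kk[V(H_2)]/I(H_2),$$
so the minimal free resolution is the tensor product of the two minimal resolutions, and the Künneth formula yields $\beta_{i,j}(R/\iin(J_G))=\sum_{a+b=i,\,c+d=j}\beta_{a,c}(R_1/\iin(J_{G_1}))\,\beta_{b,d}(R_2/\iin(J_{G_2}))$.

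From this product formula the invariants drop out by additivity over a field: $\pd(R/\iin(J_G))=p_1+p_2=p$ and $\reg(R/\iin(J_G))=r_1+r_2=r$. Evaluating at $(i,j)=(p,p+r)$, the bounds $a\le p_1,\ b\le p_2$ with $a+b=p$ force $a=p_1,\ b=p_2$, and then $c\le p_1+r_1,\ d\le p_2+r_2$ with $c+d=p+r$ force $c=p_1+r_1,\ d=p_2+r_2$; hence exactly one summand survives and $\beta_{p,p+r}(R/\iin(J_G))=\beta_{p_1,p_1+r_1}(R_1/\iin(J_{G_1}))\,\beta_{p_2,p_2+r_2}(R_2/\iin(J_{G_2}))\ne 0$ by hypothesis. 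By the criterion recalled in the introduction this is the \emph{unique} extremal Betti number of $R/\iin(J_G)$. Finally I would transfer to $J_G$. Because $\iin(J_G)$ is an initial ideal, $R/J_G$ and $R/\iin(J_G)$ share the same Hilbert series, hence the same $K$-polynomial $K(t)=\sum_{i,j}(-1)^i\beta_{i,j}t^j$. On the $\iin(J_G)$ side, $\reg=r$ and $\pd=p$ force $\beta_{i,p+r}(R/\iin(J_G))\ne 0$ only for $i=p$, so the coefficient of $t^{p+r}$ in $K$ equals $(-1)^p\beta_{p,p+r}(R/\iin(J_G))$. By semicontinuity $\pd(R/J_G)\le p$ and $\reg(R/J_G)\le r$, so the identical degree count shows the coefficient of $t^{p+r}$ in the $K$-polynomial of $R/J_G$ equals $(-1)^p\beta_{p,p+r}(R/J_G)$. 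Comparing the two polynomials gives $\beta_{p,p+r}(R/J_G)=\beta_{p,p+r}(R/\iin(J_G))\ne 0$; in particular $\pd(R/J_G)=p$ and $\reg(R/J_G)=r$, and $R/J_G$ also has a unique extremal Betti number.

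The main obstacle is conceptual rather than computational: one must resist building a resolution of $J_G$ directly and instead recognize that the \emph{uniqueness} of the extremal Betti number of $\iin(J_G)$ is exactly the feature that lets the top coefficient of the $K$-polynomial isolate the single number $\beta_{p,p+r}$, so that equality of Hilbert functions forces equality of this one Betti number. The genuinely delicate bookkeeping is concentrated in the first step, namely verifying $V(H_1)\cap V(H_2)=\emptyset$ so that the tensor decomposition and Künneth formula are legitimate.
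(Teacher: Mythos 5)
Your proof is correct, and it shares the paper's skeleton---reduce to the vertex-disjoint union $H_1 \sqcup H_2$ via Lemma \ref{intr_depth}, show $\beta_{p,p+r}(R/\iin(J_G)) \ne 0$ by a K\"unneth-type argument, then transfer this corner Betti number to $R/J_G$ through the Hilbert series---but the central step is carried out with a different tool. The paper stays simplicial: from $\beta_{p_i,p_i+r_i}(H_i)\ne 0$ and Hochster's formula it picks witness sets $W_i\subseteq V(H_i)$ with $\widetilde{H}_{r_i-1}(\Delta(H_i[W_i]);\kk)\ne 0$, notes that $\Delta(H[W_1\cup W_2])$ is the join $\Delta(H_1[W_1])*\Delta(H_2[W_2])$, and applies the topological K\"unneth formula for joins to produce nonzero homology in degree $r-1$, then invokes Hochster's formula again. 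You instead use the algebraic K\"unneth formula: the minimal free resolution of $\kk[V(H_1)]/I(H_1)\otimes_\kk \kk[V(H_2)]/I(H_2)$ is the tensor product of the two minimal resolutions, which yields the product formula for all $\beta_{i,j}(H_1\sqcup H_2)$ and, after the degree bookkeeping, the exact equality $\beta_{p,p+r}(R/\iin(J_G))=\beta_{p_1,p_1+r_1}(H_1)\,\beta_{p_2,p_2+r_2}(H_2)$, where the paper only obtains non-vanishing; your route also makes the additivity $\pd(R/\iin(J_G))=p_1+p_2$ and $\reg(R/\iin(J_G))=r_1+r_2$ self-contained, whereas the paper asserts it from Lemma \ref{intr_depth} without further detail. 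Your closing $K$-polynomial computation is exactly the paper's (terse) ``equality of Hilbert functions'' step, just spelled out. Two small remarks: the vertex-disjointness $V(H_1)\cap V(H_2)=\emptyset$, which you flag as the delicate point, is automatic because $H_1$ and $H_2$ are \emph{distinct connected components} of $H'\backslash \{x_n,y_1\}$ (that is the statement, not merely the proof, of Lemma \ref{intr_depth}); and the paper's intermediate equalities $\beta_{p_i,p_i+r_i}(R_i/J_{G_i})=\beta_{p_i,p_i+r_i}(R_i/\iin(J_{G_i}))$ are not actually needed for the stated conclusion, as your argument confirms.
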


\begin{proof} We assume $H'$ is the graph with edge ideal $\iin(J_G)$, and let $H_1$ (resp.  $H_2$)  be an initial-closed graph of $G_1$ (resp.   $G_2$). Let $H:=H'\backslash \{x_n,y_1\}$ and $n_i=|V(H_i)|$. 

For each $i=1,2$, by the  assumption, $\beta_{p_i,p_i+r_i}(R_i/\iin (J_{G_i}))\ne 0$. We know that  the Hilbert function of $R_i/J_{G_i}$, $$H(R_i/J_{G_i};t)=\frac{\sum_{i,j}(-1)^{i}\beta_{i,j}(R_i/J_{G_i})t^j}{(1-t)^{n_i}},$$ is equal to the Hilbert function of $R_i/\iin(J_{G_i})$, $$H(R_i/(\iin J_{G_i});t)=\frac{\sum_{i,j}(-1)^{i}\beta_{i,j}(R_i/\iin(J_{G_i}))t^j}{(1-t)^{n_i}}.$$  
It implies that  $\beta_{p_i,p_i+r_i}(R_i/J_{G_i}) = \beta_{p_i,p_i+r_i}(R_i/\iin(J_{G_i})) = \beta_{p_i,p_i+r_i}(H_i)$. Thus, we have    $\pd(R_i/J_{G_i}) = \pd(R_i/\iin(J_{G_i}))=p_i=\pd(H_i)$, and $\reg(R_i/J_{G_i}) = \reg(R_i/\iin(J_{G_i}))=r_i=\reg(H_i)$. 
By Lemma \ref{intr_depth}, we have  $\reg(R/\iin(J_G))=r_1+r_2=r$ and $\pd(R/\iin(J_G))=p_1+p_2=p$.      

As $\beta_{p_i,p_i+r_i}(R_i/\iin(J_{G_i}))\ne 0$ and by Hochster formula, 
there exists a subset  $W_i$ of $V(H_i)$, $|W_i| = p_i+r_i$ such that $\dim_{\kk} \widetilde{H}_{r_i-1}(\Delta_i;\kk)> 0$, where $\Delta_i:=\Delta(H_i[W_i])$ for $i=1,2$. 

Now we let $W:=W_1\cup W_2 \subseteq V(H)$, and so $|W| =   p+r$. Since $H_1$ and $H_2$ are  two connected components of $H$,   $\Delta(H[W]) = \Delta_1*\Delta_2$.   By K\"unneth formula (c.f. \cite[Proposition 3.2]{BG}), we have   $\widetilde{H}_{r-1}(\Delta(H[W]);\kk)\cong \bigoplus_{i+j=r-2} \widetilde{H}_{i}(\Delta_1;\kk)\otimes \widetilde{H}_{j}(\Delta_2;\kk).$  It implies that 
 $ \dim_{\kk} \widetilde{H}_{r-1}(\Delta(H[W]);\kk)  \ge     \dim_{\kk} \widetilde{H}_{r_1-1}(\Delta_1;\kk)   \dim_{\kk}  \widetilde{H}_{r_2-1}(\Delta_2;\kk).$ 
Therefore, by Hochster formula, $\beta_{p,p+r}(H)\ne 0$, and so $\beta_{p,p+r}(R/\iin(J_G))\ne 0$. By equality of the Hilbert functions of $R/\iin(J_{G})$ and $R/J_{G}$, $\beta_{p,p+r}(R/J_G)\ne 0$. Thus  $\reg(R/J_{G}))= r=\reg(R/\iin(J_G))$ and $\pd(R/J_{G})= p= \pd (R/\iin(J_G))$.  
\end{proof}

\section{Upper bound for projective dimension}

In this section we will give an upper bound of the projective dimension of the edge ideal of some initial-closed graphs and for some specific cases we will obtain the exact value of the projective dimension of these ideals. In order to obtain  these results, the  following lemma will be very useful.

\begin{lem} {\rm \cite[Lemma  3.1]{DHS}} \label{Long} Let $x$ be a vertex of $G$ with neighbors $y_1, y_2, \ldots , y_m$. Then  
$$(I(G):x)  =  (I(G_x), y_1, y_2, \ldots , y_m), \text{ and  } 
(I(G),x)  =  (I(G\backslash x),x).$$
 \end{lem}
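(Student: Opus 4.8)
The plan is to prove both identities at once by the standard decomposition of the edge ideal according to whether an edge is incident to $x$. First I would record that the minimal monomial generators of $I(G)$ are exactly the products $uv$ with $\{u,v\}\in E(G)$, and that each such edge either contains $x$ or not. The edges containing $x$ are precisely $\{x,y_1\},\ldots,\{x,y_m\}$ (since the $y_i$ are by hypothesis the neighbors of $x$), contributing the generators $xy_1,\ldots,xy_m$, while the edges avoiding $x$ are exactly the edges of $G\backslash x$. This gives the decomposition
$$I(G) = (xy_1,\ldots,xy_m) + I(G\backslash x),$$
which is the backbone of both computations.

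For the colon ideal I would use that colon by a variable acts generator-wise on a monomial ideal, namely $(m:x)=m/\gcd(m,x)$. Applied to the decomposition above, $(xy_i:x)=(y_i)$, whereas every generator of $I(G\backslash x)$ is coprime to $x$ and hence is left unchanged; therefore $(I(G):x)=(y_1,\ldots,y_m)+I(G\backslash x)$. The one point deserving care — and the only place where I expect a careless slip — is the passage from $I(G\backslash x)$ to $I(G_x)$. Here I would split the generators of $I(G\backslash x)$ once more: an edge not containing $x$ either also avoids all the $y_i$, in which case it is a generator of $I(G_x)=I(G\backslash N_G[x])$, or it meets some neighbor $y_i$, in which case its monomial is divisible by $y_i$ and is therefore already absorbed into $(y_1,\ldots,y_m)$. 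Consequently
$$(I(G):x) = (y_1,\ldots,y_m)+I(G\backslash x) = (y_1,\ldots,y_m)+I(G_x) = (I(G_x),y_1,\ldots,y_m),$$
which is the first asserted equality.

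The second identity is even shorter and uses the same decomposition. Adjoining $x$ as a generator to $I(G)$, each of the monomials $xy_i$ lies in $(x)$ and so becomes redundant, while the generators of $I(G\backslash x)$ persist; hence $(I(G),x)=(x)+I(G\backslash x)=(I(G\backslash x),x)$. I do not anticipate any genuine obstacle in this lemma, as everything reduces to bookkeeping of monomial generators; the only subtle step is recognizing that the edges through the neighbors $y_i$ produce no new generators of the colon ideal but are instead swallowed by $(y_1,\ldots,y_m)$, which is precisely what turns $I(G\backslash x)$ into $I(G_x)$.
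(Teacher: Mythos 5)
Your proof is correct: the decomposition $I(G) = (xy_1,\ldots,xy_m) + I(G\backslash x)$, the generator-wise computation of the colon ideal (valid since $I(G)$ is a monomial ideal and $x$ a variable, so $(I(G):x)$ is generated by $m/\gcd(m,x)$ over the monomial generators $m$), and the absorption of the edges meeting $N_G(x)$ into $(y_1,\ldots,y_m)$ — which is exactly what converts $I(G\backslash x)$ into $I(G_x)$ — are all sound. Note that the paper itself gives no proof of this lemma, quoting it instead from \cite[Lemma 3.1]{DHS}; your argument is precisely the standard one underlying that citation, so there is nothing to reconcile.
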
 

The following lemma shall be  used a lot in this section.
  
\begin{lem}    \label{DepthLM}    Let  $x$ is a vertex of $G$. Then   
\begin{enumerate}
\item $\pd(G_x)+\deg_G(x)\le \max\{\pd(G), \pd(G\backslash x)\}$,
\item $\pd(G)\le \max\{\pd(G_x)+\deg_G(x), \pd(G\backslash x)+1\}$,
\item $1+ \pd(G\backslash x)\le \max \{\pd(G_x)+\deg_G(x)+1, \pd(G)\},$
\item If $1+\pd (G\backslash x) \le \pd(G_x) + \deg_G(x)$, then  $\pd(G)= \pd (G_x) + \deg_G(x),$ 
\item If $\pd (G_x) +\deg_G(x) < \pd(G\backslash x)$, then 
 $\pd(G)=  \pd(G\backslash x)+1.$ 
\end{enumerate} 
\end{lem}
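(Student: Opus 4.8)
The plan is to reduce all five statements to a single short exact sequence produced by Lemma \ref{Long} and then read them off from the long exact sequence of $\operatorname{Tor}$. Throughout I write $I = I(G)$ and work over $R := \kk[V(G)]$, so that $\pd_R(R/I) = \pd(G)$. Note that $\pd_R(R/I(G\backslash x)) = \pd(G\backslash x)$ and $\pd_R(R/I(G_x)) = \pd(G_x)$ as well, since passing from $\kk[V(G\backslash x)]$ (resp. $\kk[V(G_x)]$) up to $R$ only adjoins polynomial variables and hence leaves the projective dimension unchanged.

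First I would compute the projective dimensions of the two quotients attached to $x$. By Lemma \ref{Long}, $(I:x) = (I(G_x), y_1, \ldots, y_m)$ and $(I,x) = (I(G\backslash x), x)$, where $y_1, \ldots, y_m$ are the neighbors of $x$, so $m = \deg_G(x)$. The variables $y_1, \ldots, y_m$ do not occur in $I(G_x)$, so they form a regular sequence of linear forms on $R/I(G_x)$; adjoining each such form to the ideal raises the projective dimension by exactly one, because multiplication by $y_i$ acts as zero on $\operatorname{Tor}_\bullet^R(-,\kk)$ and the associated long exact sequence therefore breaks into short exact pieces. Iterating gives
\[
\pd_R\!\big(R/(I:x)\big) = \pd(G_x) + \deg_G(x),
\]
and the same argument applied to the single form $x$ gives $\pd_R(R/(I,x)) = \pd(G\backslash x) + 1$.

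Next I would invoke the short exact sequence
\[
0 \longrightarrow \big(R/(I:x)\big)(-1) \xrightarrow{\ \cdot x\ } R/I \longrightarrow R/(I,x) \longrightarrow 0,
\]
and abbreviate $A := R/(I:x)$, $B := R/I$, $C := R/(I,x)$, so that $\pd A = \pd(G_x)+\deg_G(x)$, $\pd B = \pd(G)$, and $\pd C = \pd(G\backslash x)+1$. Applying $-\otimes_R \kk$ yields the long exact sequence of $\operatorname{Tor}$; since $\pd M = \max\{i : \operatorname{Tor}_i^R(M,\kk)\neq 0\}$ and the grading shift is irrelevant for projective dimension, statements (1), (2) and (3) are precisely the three standard inequalities for a short exact sequence, each of which follows from the observation that among three consecutive terms of the long exact sequence the vanishing of two forces the vanishing of the third. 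Concretely, for (2) I read off the piece $\operatorname{Tor}_b(A)\to \operatorname{Tor}_b(B)\to \operatorname{Tor}_b(C)$ at $b=\pd B$; for (1) the piece $\operatorname{Tor}_{a+1}(C)\to \operatorname{Tor}_a(A)\to \operatorname{Tor}_a(B)$ at $a = \pd A$; and for (3) the piece $\operatorname{Tor}_c(B)\to \operatorname{Tor}_c(C)\to \operatorname{Tor}_{c-1}(A)$ at $c = \pd C$.

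Finally, for the equality statements (4) and (5) I would extract sharper vanishing from the same sequence. For (4) the hypothesis $1+\pd(G\backslash x)\le \pd(G_x)+\deg_G(x)$ says $\pd C \le \pd A =: a$, hence $\operatorname{Tor}_{a+1}(C,\kk)=0$; the long exact sequence then gives an injection $\operatorname{Tor}_a(A,\kk)\hookrightarrow \operatorname{Tor}_a(B,\kk)$, and since the left-hand group is nonzero we obtain $\pd B \ge a$, which together with (2) forces $\pd B = a$. For (5) the hypothesis $\pd(G_x)+\deg_G(x) < \pd(G\backslash x)$ says $\pd A \le \pd C - 2 =: c-2$, so both $\operatorname{Tor}_c(A,\kk)$ and $\operatorname{Tor}_{c-1}(A,\kk)$ vanish; the long exact sequence then produces an isomorphism $\operatorname{Tor}_c(B,\kk)\cong \operatorname{Tor}_c(C,\kk)\neq 0$, giving $\pd B\ge c$, and again (2) forces $\pd B = c = \pd(G\backslash x)+1$. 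The step I expect to require the most care is exactly this bookkeeping in (4) and (5): one must match the non-strict hypothesis of (4) and the strict hypothesis of (5) to the correct vanishing range of $\operatorname{Tor}_\bullet(A,\kk)$ and $\operatorname{Tor}_\bullet(C,\kk)$, so that the connecting homomorphisms degenerate into an injection (resp. an isomorphism) and thereby upgrade the one-sided inequalities of (1)--(3) to the asserted equalities.
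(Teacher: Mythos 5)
Your proof is correct and follows essentially the same route as the paper: both rest on Lemma \ref{Long}, the computations $\pd(S/(I(G):x))=\pd(G_x)+\deg_G(x)$ and $\pd(S/(I(G),x))=\pd(G\backslash x)+1$, and the short exact sequence $0\to S/(I(G):x)\xrightarrow{\cdot x} S/I(G)\to S/(I(G),x)\to 0$. The only difference is in bookkeeping: the paper extracts (1)--(3) via the Depth Lemma together with the Auslander--Buchsbaum formula and then notes (4)--(5) are formal consequences, whereas you read all five statements directly off the long exact sequence of $\operatorname{Tor}(-,\kk)$ --- an equivalent derivation of the same inequalities.
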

\begin{proof} Let $S:=\kk[V(G)]$.  By Lemma \ref{Long},  we have $ \pd(S/(I(G):x))   =   \pd(G_x) +\deg_G(x)$,   and $\pd(S/(I(G),x))   =     \pd(G\backslash x) +1.$  The statements (1), (2) and (3) are followed by applying Depth lemma and Auslander-Buchsbaum formula for  the  following exact sequence: 
$$0\to S/(I(G):x) \overset{\cdot x}{\longrightarrow} S/I(G) \longrightarrow S/(I(G),x) \to 0.$$
Finally, (4) and (5) are consequences of (1), (2) and (3). 
\end{proof}
  
Following \cite{CN},  a Ferrers graph is a bipartite graph on two distinct vertex sets $X = \{x_1,\ldots , x_n\}$ and $Y = \{y_1, \ldots, y_m\}$ such that if $\{x_i,y_j\}$  is an edge of $G$, then so is $\{x_p,y_q\}$ for  $1 \le p \le i$ and $1 \le q \le j$. For any Ferrers graph $G$ there is an associated sequence of non-negative integers $\lambda(G) = (\lambda_1, \lambda_2, \ldots, \lambda_n)$, where $\lambda_i:=\deg_G(x_i)$. Notice that
the defining properties of a Ferrers graph imply that $\lambda_1 = m \ge \lambda_2 \ge \cdots \ge \lambda_n \ge 1$.

\begin{lem} {\rm \cite[Corollary 2.2]{CN}}\label{Ferrers} Let $G$ be a Ferrers graph with $\lambda(G)= (\lambda_1,\ldots,\lambda_n)$, and $I(G)$ be an edge ideal in   $\kk[V(G)]:=\kk[x_1,\ldots,x_n, y_1,\ldots,y_m]$. Then 
$$\pd(G) = \max_{1\le j\le n}\{\lambda_j+j-1\}.$$
\end{lem}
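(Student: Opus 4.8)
The plan is to bracket $\pd(G)$ from both sides by the claimed value $M:=\max_{1\le j\le n}\{\lambda_j+j-1\}$: I would get the upper bound $\pd(G)\le M$ from the recursion of Lemma \ref{DepthLM} by induction on $n$, and the lower bound $\pd(G)\ge M$ directly from Hochster's formula. Throughout I use the staircase description of a Ferrers graph, namely that $x_i$ is adjacent to exactly $y_1,\ldots,y_{\lambda_i}$ and that $\lambda_1=m\ge\lambda_2\ge\cdots\ge\lambda_n\ge 1$; in particular $y_1$ is adjacent to every $x_i$, and $x_1$ is adjacent to every $y_j$.

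For the upper bound I would induct on $n$. The base case $n=1$ is the star with edge ideal $x_1(y_1,\ldots,y_m)$, which is isomorphic as a module to $(y_1,\ldots,y_m)$; since $y_1,\ldots,y_m$ is a regular sequence this forces $\pd(G)=m=M$. For the inductive step I apply Lemma \ref{DepthLM}(2) to $x=x_1$. As $N_G[x_1]=\{x_1,y_1,\ldots,y_m\}$, the graph $G_{x_1}$ is edgeless, so $\pd(G_{x_1})+\deg_G(x_1)=0+m=\lambda_1$; and $G\backslash x_1$, after discarding the now-isolated vertices $y_{\lambda_2+1},\ldots,y_m$ (which leave $\pd$ unchanged), is a Ferrers graph with $\lambda$-vector $(\lambda_2,\ldots,\lambda_n)$. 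The induction hypothesis gives $\pd(G\backslash x_1)=\max_{2\le j\le n}\{\lambda_j+j-2\}$, hence $\pd(G\backslash x_1)+1=\max_{2\le j\le n}\{\lambda_j+j-1\}$. Lemma \ref{DepthLM}(2) then yields $\pd(G)\le\max\{\lambda_1,\ \max_{2\le j\le n}\{\lambda_j+j-1\}\}=M$, since the $j=1$ term of $M$ is precisely $\lambda_1$.

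For the lower bound I would exhibit a single induced subgraph witnessing $\beta_{M,M+1}(G)\ne 0$. Pick $p$ with $M=\lambda_p+p-1$ and set $W:=\{x_1,\ldots,x_p\}\cup\{y_1,\ldots,y_{\lambda_p}\}$, so that $|W|=M+1$. Because $\lambda_i\ge\lambda_p$ for every $i\le p$, each such $x_i$ is adjacent to all of $y_1,\ldots,y_{\lambda_p}$, so $G[W]$ is the complete bipartite graph on $\{x_1,\ldots,x_p\}$ and $\{y_1,\ldots,y_{\lambda_p}\}$. Its independence complex $\Delta(G[W])$ is then the disjoint union of the two simplices on these two vertex sets, hence disconnected, so $\widetilde H_0(\Delta(G[W]);\kk)\cong\kk$. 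Applying Hochster's formula with this $W$, where $|W|-i-1=0$ forces $i=|W|-1=M$, gives $\beta_{M,M+1}(G)\ge\dim_\kk\widetilde H_0(\Delta(G[W]);\kk)=1$, so $\pd(G)\ge M$. Combining the two inequalities yields $\pd(G)=M$.

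The genuine difficulty I anticipate is the interaction of the two candidate values $\pd(G_{x_1})+\deg_G(x_1)=\lambda_1$ and $\pd(G\backslash x_1)+1$ inside the recursion. If one tried to pin down $\pd(G)$ from Lemma \ref{DepthLM}(4)--(5) alone, the boundary case in which these two quantities are consecutive integers is left undecided by the depth lemma, and this is exactly where a purely inductive argument stalls. Splitting the proof into an \emph{unconditional} upper bound coming from part (2) and an \emph{independent} homological lower bound from Hochster's formula is what circumvents this boundary case, and I would organize the write-up in precisely that order.
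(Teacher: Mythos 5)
Your proof is correct, but there is nothing in the paper itself to compare it against: Lemma \ref{Ferrers} is quoted from \cite[Corollary 2.2]{CN} and the paper supplies no proof of it. Your argument is therefore a self-contained alternative to the cited source. Corso and Nagel obtain the formula as a byproduct of constructing an explicit minimal free resolution of the Ferrers ideal, from which the whole graded Betti table (hence $\pd$, and also the regularity) can be read off; your route instead uses only tools the present paper already has on hand, namely the recursion of Lemma \ref{DepthLM}(2) for the upper bound and a single Hochster witness for the lower bound. Both halves check out: $G_{x_1}$ is edgeless, so the colon side contributes exactly $\lambda_1$; deleting $x_1$ leaves, up to isolated vertices, a Ferrers graph with vector $(\lambda_2,\ldots,\lambda_n)$, so induction applies; and the induced subgraph on $\{x_1,\ldots,x_p\}\cup\{y_1,\ldots,y_{\lambda_p}\}$ is complete bipartite, whose independence complex is a disjoint union of two simplices, giving $\widetilde{H}_0\cong\kk$ and hence $\beta_{M,M+1}(G)\neq 0$ with $M=\max_j\{\lambda_j+j-1\}$. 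Your closing observation is also accurate: parts (4)--(5) of Lemma \ref{DepthLM} leave the boundary case $\pd(G_x)+\deg_G(x)=\pd(G\backslash x)$ undecided (already for $K_{2,2}$ at $x=x_1$), which is exactly why splitting into an unconditional upper bound from part (2) plus an independent homological lower bound is the right organization. The one thing your argument does not recover is the finer resolution-level information in \cite{CN} (e.g.\ the regularity statements invoked in the proof of Proposition \ref{CM}), so the citation cannot be dropped from the paper wholesale; but for the statement of Lemma \ref{Ferrers} as quoted, your proof is complete and makes it self-contained.
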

 
 Recall  $H$ is an initial-closed graph with its bipartition $\{x_1,\ldots,x_{n-1}\} \cup \{y_2,\ldots,y_n\}$, and   $\mu(H) = (\mu_1,\ldots,\mu_{n-1})$ is  an associated vector of $H$, where $\mu_i:=n-i-\deg_H(x_i)$ and   $\mu_1\ge \mu_2\ge \cdots\ge \mu_{n-3}\ge  \mu_{n-2} = \mu_{n-1}=0$,  furthermore $\mu_i\le n-2-i$ for all $1\le i\le n-3$.    From now on,   we  replace  $y_j$ by $y_{j-1}$ on the labelling of the vertex set of $H$. Then  the labelling on the  bipartition of $H$ would  be    $(X,Y)$,  where $X=\{x_1,\ldots, x_{n-1}\}$, $Y=\{y_1,\ldots,y_{n-1}\}$ and $n\ge 2$.   Therefore the edge ideal in $\kk[V(H)]:=  \kk[x_1,\ldots,x_{n-1}, y_1,\ldots,y_{n-1}]$ of the initial-closed graph $H$ is 
 $$I(H)=(x_iy_j \mid 1\le i\le n-1,  i\le j\le n-\mu_i -1).$$
 
 \begin{lem} \label{minus} Let  $H$ be  an initial-closed graph.  Then 
\begin{enumerate}
\item  The connected components of  $H\backslash \{x_i,y_{i}\}$ are also initial-closed graphs  for all $1\le i\le n-1$.
\item If $S=\{x_1,\ldots,x_i\}\cup \{y_1,\ldots,y_{i}\}$, for some $1\le i\le n-1$, then $H[S]$ is also an  initial-closed graph.
\end{enumerate}   
 \end{lem}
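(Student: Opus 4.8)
The plan is to prove both statements by unwinding the defining inequality of an initial-closed graph, namely that the edge set is $I(H) = (x_iy_j \mid 1 \le i \le n-1,\ i \le j \le n - \mu_i - 1)$ with $\mu_1 \ge \mu_2 \ge \cdots \ge \mu_{n-1} = 0$, and checking that each derived graph again has edge set of exactly this combinatorial shape with a new decreasing vector. First I would recall precisely which adjacencies survive in the relevant subgraph, then produce a candidate $\mu$-vector for it and verify monotonicity and the range bound $\mu'_k \le n' - 2 - k$ inherited from the parent.

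For part (2), with $S = \{x_1,\dots,x_i\} \cup \{y_1,\dots,y_i\}$, the induced subgraph $H[S]$ keeps an edge $\{x_a, y_b\}$ precisely when $a,b \le i$ and $a \le b \le n - \mu_a - 1$. Since $a \le i$ we have $n - \mu_a - 1 \ge n - \mu_1 - 1$, and the effective upper cutoff becomes $\min\{i,\ n - \mu_a - 1\}$. Setting $n' = i+1$ (so the vertex sets are $\{x_1,\dots,x_{n'-1}\}$, $\{y_1,\dots,y_{n'-1}\}$), I would define $\mu'_a := \max\{0,\ i - (n - \mu_a - 1)\} = \max\{0,\ \mu_a - (n-1-i)\}$ for $1 \le a \le i$, so that the cutoff reads $n' - \mu'_a - 1$. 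The monotonicity $\mu'_1 \ge \cdots \ge \mu'_i$ follows from that of $\mu$ because truncation at zero and a uniform shift preserve the ordering, and the bound $\mu'_a \le n' - 2 - a$ follows from $\mu_a \le n - 2 - a$ after subtracting $n-1-i$. Here the only care needed is confirming $\mu'_{n'-1} = \mu'_{n'-2} = 0$, which holds because the corresponding $\mu_a$ are small enough that the shift drives them nonpositive.

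For part (1), removing $x_i$ and $y_i$ splits $H$ into vertices indexed below $i$ and above $i$; because the edge condition $a \le b$ forces every surviving edge to lie entirely among $\{x_1,\dots,x_{i-1},y_1,\dots,y_{i-1}\}$ or entirely among $\{x_{i+1},\dots,x_{n-1},y_{i+1},\dots,y_{n-1}\}$ (an edge $\{x_a,y_b\}$ with $a < i < b$ cannot occur once both $x_i$ and $y_i$ are deleted, since by condition (2) of Lemma~\ref{def-ini} such a cross edge would force $\{x_a,y_i\}$ and hence a contradiction with the component structure of Lemma~\ref{def-ini}(3)), the remaining graph is a disjoint union of two pieces. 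The lower piece is exactly $H[\{x_1,\dots,x_{i-1}\}\cup\{y_1,\dots,y_{i-1}\}]$, which is initial-closed by part (2) applied with $i-1$. For the upper piece I would re-index the vertices by $x_a \mapsto x_{a-i}$, $y_b \mapsto y_{b-i}$ and check that its edge set again has the prescribed upper-triangular Ferrers shape, producing a shifted decreasing vector; the conditions (a)--(c) of Lemma~\ref{def-ini}(2), which the edges of $H$ inherit, guarantee that this re-indexed graph satisfies the same structural axioms and hence is initial-closed.

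I expect the main obstacle to be the bookkeeping for the upper component in part (1): one must verify that after deleting $x_i, y_i$ and re-indexing, the new vector is genuinely decreasing and satisfies the range bound on its tail, which requires tracking how $\mu_a$ for $a > i$ transforms under the shift and confirming that the new graph has no stray isolated vertices that would violate the initial-closed definition (cf. Lemma~\ref{def-ini}(4)). The cleanest route is probably to describe both derived graphs directly through their $\mu$-vectors and appeal to the characterization established in Lemmas~\ref{lambda}, \ref{def-ini} and \ref{lem15}, rather than re-deriving the structural conditions (a)--(c) from scratch each time.
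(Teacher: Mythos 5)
Your treatment of part (2) is sound: the induced subgraph on $\{x_1,\ldots,x_i\}\cup\{y_1,\ldots,y_i\}$ has edge set $\{\{x_a,y_b\} \mid a\le b\le \min\{i,\,n-\mu_a-1\}\}$, and your vector $\mu'_a=\max\{0,\mu_a-(n-1-i)\}$, with the monotonicity and range checks, is exactly the bookkeeping that the paper leaves implicit (its proof is the single line ``follows immediately by the definition''), and it agrees with how the statement is later invoked in Lemma \ref{mu1_1}.

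Part (1), however, rests on a false claim. You assert that no ``cross edge'' $\{x_a,y_b\}$ with $a<i<b$ can survive in $H\setminus\{x_i,y_i\}$, so that the deletion always splits $H$ into a piece on indices below $i$ and a piece on indices above $i$. Deleting two vertices never removes an edge between two \emph{other} vertices: whenever $\{x_a,y_b\}\in E(H)$ with $a<i<b$ (which happens precisely when $\mu_a\le n-i-2$ for some $a<i$, i.e.\ when $\mu_{i-1}\le n-i-2$), that edge remains in $H\setminus\{x_i,y_i\}$. A concrete counterexample: take $\mu=(0,\ldots,0)$ (so $G=K_n$ and $H$ is the Ferrers graph with edges $\{x_a,y_b\}$, $a\le b\le n-1$) and $i=2$; then $\{x_1,y_3\}$ survives and $H\setminus\{x_2,y_2\}$ is \emph{connected}. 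Your parenthetical justification is also incoherent: the interval structure does force $\{x_a,y_i\}\in E(H)$, but after deleting $y_i$ that edge simply disappears, and nothing contradicts Lemma \ref{def-ini}(3). So your argument covers only the case $\mu_{i-1}\ge n-i-1$, where the graph genuinely is the disjoint union of the two pieces you describe; it says nothing about the complementary case, in which $H\setminus\{x_i,y_i\}$ is a single component and one must verify that, after shifting the indices $a>i$ down by one, it is again initial-closed. (Its vector is $\mu''_a=\mu_a-1$ if $a<i$ and $\mu_a\ge n-i$, $\mu''_a=\mu_a$ if $a<i$ and $\mu_a\le n-i-1$, and $\mu''_{a-1}=\mu_a$ for $a>i$; monotonicity and the bounds $\mu''_a\le (n-1)-a-2$ do hold, but the check at $a=i-1$ uses exactly the inequality $\mu_{i-1}\le n-i-2$ defining this case.) This omission is not peripheral: the connected case is precisely the situation exploited in Case 2 of Lemma \ref{mu2}, where $\mu_{s-1}<n-s-1$ guarantees that $H\setminus\{x_s,y_s\}$ is connected, so a proof of part (1) that presumes disconnection misses the instance the paper actually needs.
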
 
  \begin{proof}   The proof follows immediately by the definition of the initial-closed graphs.
 \end{proof} 

\begin{exam} \label{exam1}
    A  graph  $H$ in Figure \ref{mu31} is an initial-closed graph with  $\mu(H) = (3,1,0,0,0)$.   
\begin{center}
\begin{figure}[H]
 \begin{tabular}{cc}
\includegraphics[scale=0.7]{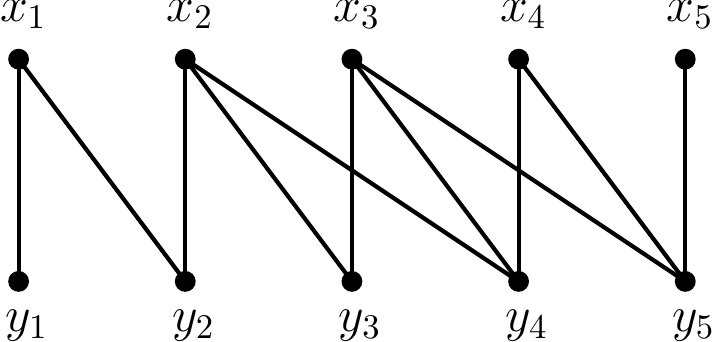} \qquad  &
\qquad  
\includegraphics[scale=0.55]{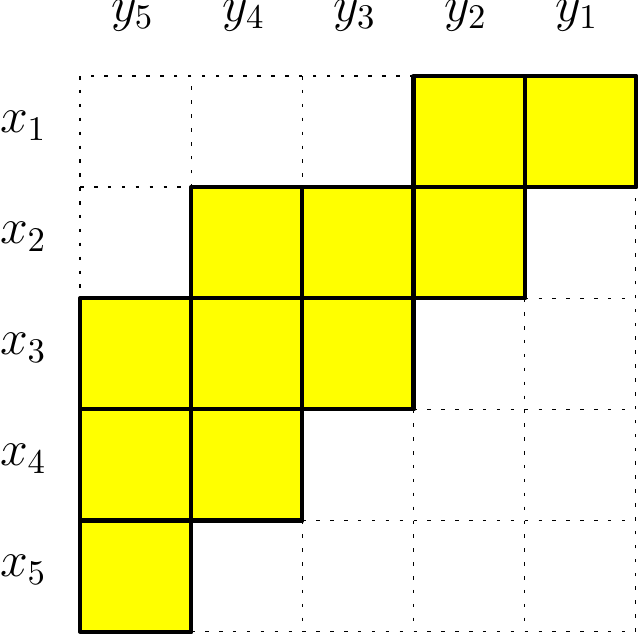} 
\end{tabular}  
\caption{Initial-closed graph $H$ and its illustration by diagram} 
\label{mu31}
\end{figure} 
  \end{center}
 \end{exam}

\begin{prop} \label{CM} 
Let   $H$ be an  initial-closed graph.   Then the  following conditions are equivalent:
\begin{enumerate}
\item $H$ is Cohen-Macaulay  (i.e. $\pd(H) = n-1$),
\item $H$ is unmixed,
\item $\mu(H)=(0,\ldots,0)$,
\item $\reg(H)=1$.
\end{enumerate}
\end{prop}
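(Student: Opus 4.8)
The plan is to establish the cycle of implications $(1)\Rightarrow(2)\Rightarrow(3)\Rightarrow(4)\Rightarrow(1)$, using the explicit description of the edge ideal $I(H)=(x_iy_j\mid 1\le i\le n-1,\ i\le j\le n-\mu_i-1)$ together with the earlier structural lemmas. The implication $(1)\Rightarrow(2)$ is standard: every Cohen--Macaulay ring is unmixed (all associated primes have the same height), so this requires only a one-line citation. The heart of the argument lies in $(2)\Rightarrow(3)$ and $(4)\Rightarrow(1)$.

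For $(2)\Rightarrow(3)$ I would argue by contrapositive: assume $\mu(H)\ne(0,\ldots,0)$ and exhibit two minimal vertex covers of $H$ of different sizes, which shows $I(H)$ is not unmixed (the minimal primes of an edge ideal correspond to minimal vertex covers, and unmixedness means these all have the same cardinality). Since $H$ is bipartite with parts $X=\{x_1,\ldots,x_{n-1}\}$ and $Y=\{y_1,\ldots,y_{n-1}\}$, the set $X$ itself is a vertex cover of size $n-1$, and by Lemma~\ref{def-ini}(4) (no isolated vertices after removing the trivial ones, hence here every vertex has positive degree) it is in fact a minimal cover. The condition $\mu(H)\ne 0$ means some $x_i$ has degree strictly less than its maximal possible value, i.e. there is a ``missing'' edge $\{x_i,y_j\}$ with $j=n-\mu_i$; the plan is to use this slack to build a second minimal cover of strictly smaller size, for instance by swapping some $x_i$ out of $X$ and replacing it by the (strictly fewer) $y$-neighbors that its removal leaves uncovered. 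The combinatorics of the ``staircase'' shape encoded by $\mu$ (illustrated in Figure~\ref{mu31}) makes this swap possible precisely when $\mu\ne 0$.

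For $(3)\Rightarrow(4)$: if $\mu(H)=(0,\ldots,0)$ then $\deg_H(x_i)=n-i$, so $\lambda_i=\deg_H(x_i)$ gives a decreasing sequence and $H$ is a Ferrers graph. One checks this Ferrers graph has $I(H)$ with a linear resolution, whence $\reg(H)=1$; alternatively, with $\mu=0$ the ideal is the full ``triangular'' edge ideal and a direct Hochster-formula computation (or the known regularity formula for Ferrers graphs) gives $\reg=1$. For $(4)\Rightarrow(1)$, I would use Lemma~\ref{Ferrers}: when $\mu=0$ the graph is Ferrers with $\lambda(H)=(n-1,n-2,\ldots,1)$, so
\[
\pd(H)=\max_{1\le j\le n-1}\{\lambda_j+j-1\}=\max_{1\le j\le n-1}\{(n-j)+j-1\}=n-1,
\]
which is exactly the Cohen--Macaulay condition $\pd(H)=n-1$ (since the number of vertices relevant here forces $\dim$ and the Auslander--Buchsbaum equality). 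To close the loop cleanly I would first prove $(3)\Leftrightarrow(4)$ and $(3)\Leftrightarrow(1)$ via the Ferrers computation, and then note $(1)\Rightarrow(2)\Rightarrow(3)$ completes the equivalence.

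The main obstacle I anticipate is $(2)\Rightarrow(3)$: producing the two minimal vertex covers of different cardinalities requires a careful, explicit combinatorial construction that genuinely uses the staircase constraints $\mu_1\ge\cdots\ge\mu_{n-1}=0$ and $\mu_i\le n-2-i$, rather than a soft homological argument. The delicate point is ensuring the constructed second cover is \emph{minimal} (no proper subset is a cover), which forces one to track exactly which edges each swapped vertex was covering; I would handle this by choosing the swap at the \emph{first} index $i$ where $\mu_i>0$, so that the portion of $H$ below that index is still a full triangular piece whose covering behavior is completely understood.
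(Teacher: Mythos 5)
Your overall strategy differs genuinely from the paper's: the paper settles $(2)\Rightarrow(3)$ by quoting Villarreal's structure theorem for unmixed bipartite graphs \cite[Theorem 1.1]{V}, and $(4)\Rightarrow(1)$ by quoting $\reg=\im$ from \cite[Theorem 2.2]{EZ} together with the Cohen--Macaulay criterion for Ferrers graphs \cite[Corollary 2.8]{CN}, whereas you propose doing the combinatorics by hand. That route is viable, but as written your proposal has two genuine gaps. The first is in $(2)\Rightarrow(3)$. Any vertex cover avoiding $x_i$ must contain \emph{all} of $N_H(x_i)$, so your swap necessarily produces $C_i=(X\setminus\{x_i\})\cup N_H(x_i)$, of cardinality $(n-2)+(n-i-\mu_i)\ge n$ (using $\mu_i\le n-2-i$); this is strictly \emph{larger} than $|X|=n-1$, not smaller as you assert --- harmless in itself, since any size discrepancy contradicts unmixedness, but a sign the bookkeeping is off. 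The serious error is the choice of the \emph{first} index with $\mu_i>0$: minimality of $C_i$ reduces to every $x_k$ with $k>i$ having a neighbour outside $N_H(x_i)$ (the vertices $x_k$ with $k<i$ keep the private edge $\{x_k,y_k\}$, and each $y_j\in N_H(x_i)$ keeps $\{x_i,y_j\}$), and this holds if and only if $\mu_k<\mu_i$ for all $k>i$. For $\mu(H)=(1,1,0,\ldots,0)$ one has $N_H(x_2)=\{y_2,\ldots,y_{n-2}\}\subseteq N_H(x_1)=\{y_1,\ldots,y_{n-2}\}$, so $x_2$ is redundant in $C_1$ and $C_1$ is \emph{not} minimal; your recipe fails exactly when $\mu$ has repeated positive entries. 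The repair is to swap at the \emph{last} index $s$ with $\mu_s>0$: then every $x_k$ with $k>s$ has $\mu_k=0$ and keeps the private edge $\{x_k,y_{n-1}\}$ (note $y_{n-1}\notin N_H(x_s)$), so $C_s$ is a minimal cover of size $2(n-1)-s-\mu_s\ne n-1$.

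The second gap is that condition (4) never actually enters your cycle. Your sketch of ``$(4)\Rightarrow(1)$'' begins ``when $\mu=0$ the graph is Ferrers\dots'', i.e.\ it assumes (3), not (4); and of the claimed equivalence $(3)\Leftrightarrow(4)$ you only prove $(3)\Rightarrow(4)$. The missing implication is $(4)\Rightarrow(3)$: if $\reg(H)=1$ then $\mu(H)=(0,\ldots,0)$. This can be supplied without the paper's appeal to \cite{EZ}: use the general inequality $\im(H)\le\reg(H)$ and observe that $\mu_1>0$ produces the induced matching $\bigl\{\{x_1,y_{n-1-\mu_1}\},\{x_{n-1},y_{n-1}\}\bigr\}$ of size two, since $\{x_1,y_{n-1}\}\notin E(H)$ (because $n-1>n-1-\mu_1$) and $N_H(x_{n-1})=\{y_{n-1}\}$; hence $\reg(H)\ge 2$, a contradiction. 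With these two repairs --- last index instead of first in the cover construction, and an explicit proof of $(4)\Rightarrow(3)$ --- your argument closes correctly and is independent of the structure theorems the paper cites.
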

\begin{proof}  (1) $\Rightarrow$  (2)  is well-known.  
 \medskip

 (2) $\Rightarrow$  (3):  We have  $H$ is a connected bipartite graph with   bipartition $(X,Y)$, where $X=\{x_1,\ldots,x_{n-1}\}$ and $Y=\{y_1,\ldots, y_{n-1}\}$. Since $H$ is an initial-closed graph, $H$ satisfies two following conditions: 
 \begin{eqnarray*}
\text{(a)} && x_iy_i\in E(H) \text{ for all } 1\le i\le n-1,\\ 
\text{(b)}&&  \text{If }  x_iy_j\in E(H), \text{ then } i\le j.
 \end{eqnarray*} 
By Lemma \ref{neibour}(1) and Lemma \ref{def-ini}(1),  we assume $N_H(x_1)=\{y_1,\ldots,y_t\}$. If $t<n-1$, then $\{x_t,y_{t+1}\}\in E(H)$ because $H$ is connected. 
  By  \cite[Theorem 1.1]{V},   $\{x_1,y_{t+1}\}\in E(H)$, a contradiction. Hence $t=n-1$ which implies $\mu_1=0$. By Lemma \ref{lambda}, $\mu_i=0$ for all $1\le i\le n-1$.  

  \medskip

 (3) $\Rightarrow$  (4):  In this case $H$ is a Ferrers graph with $\lambda(H) = (n-1,\ldots,1)$. By \cite[Corollary 2.2]{CN},  $\reg(H)=1$.

  \medskip

 (4) $\Rightarrow$  (1):   By \cite[Theorem 2.2]{EZ}, $\reg(H) = \im(H)=1$. Hence,  $H$ is a Ferrers graph with $\lambda(H)= (n-1,\ldots, 2, 1)$. Thus the assertion follows from  \cite[Corollary 2.8]{CN}.
 \end{proof}

From now,  we will consider  the initial-closed graph $H$ with an associated vector  $\mu(H) = (\mu_1,\ldots,\mu_s,0,\ldots,0)\in \mathbb N^{n-1}$, where $0<\mu_s\le \ldots\le \mu_1$, $1\le s\le n-3$, and $\mu_j\le n-2-j$ for all $j=1,\ldots,s$. 
 
 \begin{lem} \label{lem1} Let $H$ be an initial-closed graph.  Then
\begin{enumerate} 
\item  If $s=1$, then  $\pd(H)=2(n-1)-(\mu_1+1),$  
\item  If $\mu_1=\cdots=\mu_s=1$, then  $\pd(H)=2(n-1)-(s+1).$  
 \end{enumerate} 
 \end{lem}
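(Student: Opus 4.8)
The plan is to compute $\pd(H)$ in both cases by peeling off the vertex $x_1$ and invoking Lemma \ref{DepthLM}(4); for this I must determine $\pd(H_{x_1})$ and $\pd(H\setminus x_1)$ and then verify the inequality $1+\pd(H\setminus x_1)\le \pd(H_{x_1})+\deg_H(x_1)$. Recall from $\mu_i = n-i-\deg_H(x_i)$ that $x_1$ is joined exactly to $y_1,\dots,y_{n-1-\mu_1}$, so $\deg_H(x_1)=n-1-\mu_1$, and that $y_1$ is a leaf whose only neighbor is $x_1$ (since $x_iy_1\in E(H)$ forces $i\le 1$). Hence in $H\setminus x_1$ the vertex $y_1$ is isolated and the remaining non-trivial part is the induced subgraph on $\{x_2,\dots,x_{n-1}\}\cup\{y_2,\dots,y_{n-1}\}$, which by Lemma \ref{minus}(1) is again an initial-closed graph.

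For part (1), where $s=1$, I would first identify $H_{x_1}=H\setminus\{x_1,y_1,\dots,y_{n-1-\mu_1}\}$. Its surviving vertices are $x_2,\dots,x_{n-1}$ together with the $\mu_1$ vertices $y_{n-\mu_1},\dots,y_{n-1}$, and there $x_i$ is joined to $y_j$ for $\max\{i,n-\mu_1\}\le j\le n-1$; after reversing the order of the $y$'s this is precisely a Ferrers graph with $\lambda=(\mu_1,\dots,\mu_1,\mu_1-1,\dots,1)$, with $n-\mu_1-1$ copies of $\mu_1$. A short computation shows $\lambda_j+j-1=n-2$ for every $j$, so Lemma \ref{Ferrers} gives $\pd(H_{x_1})=n-2$. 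Since $\mu_i=0$ for $i\ge 2$, the non-trivial part of $H\setminus x_1$ is the Cohen--Macaulay initial-closed graph on $\{x_2,\dots,x_{n-1}\}\cup\{y_2,\dots,y_{n-1}\}$, whence $\pd(H\setminus x_1)=n-2$ by Proposition \ref{CM}. The required inequality $1+\pd(H\setminus x_1)=n-1\le 2n-3-\mu_1=\pd(H_{x_1})+\deg_H(x_1)$ holds because $\mu_1\le n-2$, so Lemma \ref{DepthLM}(4) yields $\pd(H)=(n-2)+(n-1-\mu_1)=2(n-1)-(\mu_1+1)$.

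For part (2) I would argue by induction on $s$, the base case $s=1$ being the case $\mu_1=1$ of part (1). When $\mu_1=\cdots=\mu_s=1$ we have $\deg_H(x_1)=n-2$, and removing $N_H[x_1]=\{x_1,y_1,\dots,y_{n-2}\}$ leaves only the single $Y$-vertex $y_{n-1}$, joined to exactly those $x_i$ with $\mu_i=0$, i.e. to $x_{s+1},\dots,x_{n-1}$; thus $H_{x_1}$ is a star with $n-1-s$ leaves (plus isolated vertices), a Ferrers graph with $\lambda=(n-1-s)$, so $\pd(H_{x_1})=n-1-s$ by Lemma \ref{Ferrers}. The non-trivial part of $H\setminus x_1$ is the initial-closed graph on $\{x_2,\dots,x_{n-1}\}\cup\{y_2,\dots,y_{n-1}\}$ with associated vector $(\underbrace{1,\dots,1}_{s-1},0,\dots,0)$, so by the inductive hypothesis its projective dimension is $2(n-2)-s=2n-4-s$. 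Now $1+\pd(H\setminus x_1)=2n-3-s=(n-1-s)+(n-2)=\pd(H_{x_1})+\deg_H(x_1)$, so Lemma \ref{DepthLM}(4) applies (with equality) and gives $\pd(H)=2n-3-s=2(n-1)-(s+1)$.

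The main obstacle is the exact determination of $\pd(H_{x_1})$: recognizing $H_{x_1}$ as a Ferrers graph with constant $\lambda_j+j-1$ in case (1), and as a star in case (2), is what makes the decisive equality in Lemma \ref{DepthLM}(4) available; everything else (the isolated vertex $y_1$, the Cohen--Macaulay identification, and the numerical inequalities) is routine once the structure of these auxiliary graphs is pinned down via Lemmas \ref{def-ini}, \ref{minus} and the clique description of Lemma \ref{neibour}(1).
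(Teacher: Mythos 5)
Your proof is correct, but it takes a genuinely different route from the paper's. Both arguments pivot on Lemma \ref{DepthLM}(4), but the paper peels off the vertex $y_{n-1}$ and inducts on $n$: there $H_{y_{n-1}}$ is a star with apex $x_1$ plus isolated vertices, while $H\setminus y_{n-1}$ reduces (after discarding the isolated vertex $x_{n-1}$) to the initial-closed graph $H\setminus\{x_{n-1},y_{n-1}\}$ with vector $(\mu_1-1,0,\ldots,0)$, which is handled by the induction hypothesis; part (2) is then dismissed with ``proved similarly.'' You instead peel off $x_1$: in part (1) this avoids induction altogether, since $H_{x_1}$ becomes a Ferrers graph computed by Lemma \ref{Ferrers} and $H\setminus x_1$ is, up to the isolated vertex $y_1$, the Cohen--Macaulay initial-closed graph of Proposition \ref{CM}; in part (2) you induct on $s$ rather than on $n$, with $H_{x_1}$ a star. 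Your version buys an induction-free part (1) and an explicit proof of part (2), which the paper omits; the paper's version buys a single uniform induction and a simpler auxiliary graph (a star rather than a general Ferrers graph). One small slip to fix: your claim in part (1) that $\lambda_j+j-1=n-2$ for \emph{every} $j$ is false on the initial block, since $\lambda_j+j-1=\mu_1+j-1<n-2$ whenever $j<n-1-\mu_1$ (e.g.\ $j=1$ gives $\mu_1\le n-3$); what is true, and all that Lemma \ref{Ferrers} requires, is that the maximum over $j$ equals $n-2$, attained for $j\ge n-1-\mu_1$, so the conclusion $\pd(H_{x_1})=n-2$ and everything downstream of it stands.
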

\begin{proof}  The assertion (2) is proved   similarly as the assertion  (1).  We now prove assertion  (1) by  induction on $n$.   If $n=4$, then $\mu(H)= (1,0,0)$. In this case, $H$ is a path of length $5$. By \cite[Corollary 7.7.35]{J},   $\pd(H) = 4$.  

We now assume that $n\ge 5$. By Lemma \ref{minus}, $H\backslash \{x_{n-1},y_{n-1}\}$ is an initial-closed graph with $\mu(H\backslash\{x_{n-1}, y_{n-1}\})=(\mu_1-1,0,\ldots,0)\in \mathbb N^{n-2}$. By the induction hypothesis,    $\pd(H\backslash \{x_{n-1},y_{n-1}\})  = 2(n-2)- \mu_1$.  Since  $N_H(x_{n-1})=\{y_{n-1}\}$,   $x_{n-1}$ is an isolated vertex of $H\backslash y_{n-1}$. Thus,   $  \pd (H\backslash y_{n-1})    =   \pd (H\backslash \{x_{n-1},y_{n-1}\})  = 2(n-2)- \mu_1.$ 

By the assumption,   $N_H(y_{n-1}) = \{x_2,\ldots,x_{n-1}\}$. Hence $\deg_H(y_{n-1})= n-2$, and    $V(H_{y_{n-1}}) = \{x_1,y_1,\ldots,y_{n-2}\}$. Thus,   $H_{y_{n-1}}$ is the  disjoint union of the  star graph on vertex set $\{x_1,y_1,\ldots,y_{n-1-\mu_1}\}$, which apex is  $x_1$, and the  isolated vertices $y_{n-\mu_1},\ldots,y_{n-2}$.  By \cite[Theorem 5.4.11]{J},       $\pd(H_{y_{n-1}}) = n-1-\mu_1$. 
  
By Lemma \ref{DepthLM}(4), we conclude that    $\pd(H) = 2(n-1)-(\mu_1+1)$, as required. 
  \end{proof}
 
 \begin{prop} \label{equal_mu} 
Let $H$ be an initial-closed graph. If  $\mu_1=\ldots=\mu_s=c\ge 1$, then $\pd(H) = 2(n-1)-(c+s).$  
\end{prop}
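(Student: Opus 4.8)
The plan is to argue by induction on $c$, generalising the proof of Lemma \ref{lem1}(1), which is precisely the case $s=1$. When $c=1$ the statement is exactly Lemma \ref{lem1}(2), so that provides the base case, and it remains to carry out the step $c\ge 2$, reducing the common value from $c$ to $c-1$ while keeping $s$ fixed.

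For the step I would delete the vertex $y_{n-1}$ and apply Lemma \ref{DepthLM}(4) to $H$ with $x=y_{n-1}$, first recording the two ingredients. Since $\mu_{n-1}=0$ one has $N_H(x_{n-1})=\{y_{n-1}\}$, so $x_{n-1}$ becomes isolated in $H\backslash y_{n-1}$ and hence $\pd(H\backslash y_{n-1})=\pd(H\backslash\{x_{n-1},y_{n-1}\})$. By Lemma \ref{minus}, $H\backslash\{x_{n-1},y_{n-1}\}$ is again an initial-closed graph, and a direct degree count shows its associated vector is $(c-1,\ldots,c-1,0,\ldots,0)$ with exactly $s$ entries equal to $c-1$; here $c\ge 2$ guarantees $c-1\ge 1$, and the standing bound $c\le n-2-s$ guarantees that the hypotheses of the statement persist after the reduction. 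The induction hypothesis then gives $\pd(H\backslash y_{n-1})=2(n-2)-(c-1+s)$.

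For the second ingredient, counting neighbours gives $N_H(y_{n-1})=\{x_{s+1},\ldots,x_{n-1}\}$, so $\deg_H(y_{n-1})=n-1-s$, and $H_{y_{n-1}}=H\backslash N_H[y_{n-1}]$ is the disjoint union of the $c-1$ isolated vertices $y_{n-c},\ldots,y_{n-2}$ together with the bipartite graph $K$ on $\{x_1,\ldots,x_s\}\cup\{y_1,\ldots,y_{n-c-1}\}$ in which $x_i$ is joined to $y_i,\ldots,y_{n-c-1}$. Reversing the labelling of the $y$-side exhibits $K$ as a Ferrers graph with $\lambda(K)=(n-c-1,n-c-2,\ldots,n-c-s)$, the bound $c\le n-2-s$ making the last entry positive, so Lemma \ref{Ferrers} yields $\pd(H_{y_{n-1}})=\pd(K)=n-c-1$, the isolated vertices not affecting the projective dimension. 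Combining the two computations, $1+\pd(H\backslash y_{n-1})=2n-2-c-s=\pd(H_{y_{n-1}})+\deg_H(y_{n-1})$, so the hypothesis of Lemma \ref{DepthLM}(4) holds (with equality) and gives $\pd(H)=\pd(H_{y_{n-1}})+\deg_H(y_{n-1})=2(n-1)-(c+s)$, completing the induction.

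I expect the main obstacle to be structural bookkeeping rather than any hard idea: one must correctly identify the $\mu$-vector of $H\backslash\{x_{n-1},y_{n-1}\}$ and, above all, recognise $H_{y_{n-1}}$ (after reversing the $y$-labels and discarding the isolated vertices) as a Ferrers graph whose $\lambda$-sequence is linear in the index, so that the maximum in Lemma \ref{Ferrers} collapses to the single value $n-c-1$. A secondary delicacy is that Lemma \ref{DepthLM}(4) is invoked in the boundary case of equality $1+\pd(H\backslash y_{n-1})=\pd(H_{y_{n-1}})+\deg_H(y_{n-1})$, which is nonetheless admissible.
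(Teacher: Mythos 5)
Your proposal is correct and follows essentially the same route as the paper's own proof: induction on $c$ with Lemma \ref{lem1}(2) as base case, computing $\pd(H_{y_{n-1}})$ via the Ferrers graph on $\{x_1,\ldots,x_s\}\cup\{y_1,\ldots,y_{n-c-1}\}$ with $\lambda=(n-c-1,\ldots,n-c-s)$, computing $\pd(H\backslash y_{n-1})$ via Lemma \ref{minus} and the induction hypothesis, and concluding with Lemma \ref{DepthLM}(4). Your added remarks (the relabelling of the $y$-side needed to make $K$ literally Ferrers, and the admissibility of the equality case in Lemma \ref{DepthLM}(4)) are accurate refinements of details the paper passes over silently.
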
 
\begin{proof} We prove the lemma by induction on $c$. If $c=1$, then    proposition follows from Lemma \ref{lem1}(2).  

We now assume that  $c\ge 2$. By the assumption,   $N_H(y_{n-1}) = \{x_{s+1},\ldots,x_{n-1}\}$.  Hence $\deg_H(y_{n-1}) = n-s-1$ and $V(H_{y_{n-1}}) = \{x_1,\ldots,x_s\}\cup \{y_1,\ldots,y_{n-2}\}$. Thus,  $H_{y_{n-1}}$ is the disjoint  union of the isolated vertices   $y_{n-c},\ldots,y_{n-2}$, and the induced graph $H'$ of $H$ on vertex set $\{x_1,\ldots,x_s\}\cup \{y_1,\ldots,y_{n-c-1}\}$.  Note that  $H'$ is  a  Ferrers graph with   $\lambda(H')=(n-c-1, n-c-2,\ldots, n-c-s)\in \mathbb N^s$. By Lemma \ref{Ferrers},     $\pd(H')  =n-c-1$, and so $\pd(H_{y_{n-1}})  = n-c-1$.

On the other hand, by Lemma \ref{minus}, $H\backslash\{x_{n-1}, y_{n-1}\}$ is also an initial-closed graph with $\mu(H\backslash\{x_{n-1}, y_{n-1}\}) = (c-1,\ldots,c-1,0,\ldots,0)\in \mathbb N^{n-2}$.  By the induction hypothesis, we have  $\pd(H\backslash \{x_{n-1},y_{n-1}\})  =  2(n-2) - (c-1+s)$.  
Since $x_{n-1}$ is an isolated vertex of $H\backslash y_{n-1}$, we obtain
 $\pd(H\backslash y_{n-1})  =    \pd(H\backslash \{x_{n-1},y_{n-1}\}) 
  =    2n-3 -(c+s).$ 
  
 By Lemma \ref{DepthLM}(4),  we conclude  that  $\pd(H) =2(n-1)-(c+s).$ 
\end{proof}

\begin{lem} \label{mu1_1}
Let $H$ be an initial-closed graph. If $s\ge 2$, then
$$\pd(H_{x_s}) =  n-1-s+\pd(H'),$$
where $H'$  is the induced graph of $H$  on the vertex set   $\{x_1,\ldots,x_{s-1}\}\cup \{y_1,\ldots,y_{s-1}\}$. Moreover $H'$ is an initial-closed graph and $\mu(H') = (\mu'_1, \ldots, \mu'_{s-3},0,0)$ with $\mu'_j=\max\{0, \mu_j-(n-s)\}$ for  $1\le j\le s-3$. In particular, 
$\pd (H_{x_s}) \ge  n-2$, and the equality holds   if and only if $\mu_1\le n-s$. 
\end{lem}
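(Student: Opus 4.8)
The plan is to analyze $H_{x_s}=H\setminus N_H[x_s]$ directly from the edge description $I(H)=(x_iy_j\mid i\le j\le n-\mu_i-1)$ and to show that it splits as a disjoint union of two graphs whose projective dimensions can be computed separately. First I would record that $N_H(x_s)=\{y_s,\ldots,y_{n-\mu_s-1}\}$, so deleting $N_H[x_s]$ leaves the $x$-vertices $x_1,\ldots,x_{s-1},x_{s+1},\ldots,x_{n-1}$ and the $y$-vertices $y_1,\ldots,y_{s-1},y_{n-\mu_s},\ldots,y_{n-1}$. I would then check that no edge joins the ``low'' block $\{x_1,\ldots,x_{s-1}\}\cup\{y_1,\ldots,y_{s-1}\}$ to the ``high'' block $\{x_{s+1},\ldots,x_{n-1}\}\cup\{y_{n-\mu_s},\ldots,y_{n-1}\}$: an edge $\{x_i,y_j\}$ with $i\le s-1$ and $j\ge n-\mu_s$ would force $\mu_i\le n-1-j\le \mu_s-1<\mu_s\le\mu_i$, while an edge with $i\ge s+1$ and $j\le s-1$ is excluded by $i\le j$. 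Hence $H_{x_s}$ is the disjoint union of the induced subgraphs $H'$ (the low block) and $H''$ (the high block), and since their variable sets are disjoint, $\pd(H_{x_s})=\pd(H')+\pd(H'')$.

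The main work is to identify $H''$ and compute $\pd(H'')$. Since $\mu_i=0$ for $i>s$, in $H''$ the vertex $x_{s+k}$ is adjacent exactly to the high $y$-vertices $y_j$ with $\max(s+k,n-\mu_s)\le j\le n-1$, so its degree equals $\min(n-s-k,\mu_s)$, a non-increasing function of $k$ with nested neighborhoods. Relabeling the $\mu_s$ high $y$-vertices in reverse order exhibits $H''$ as a Ferrers graph with $\lambda(H'')=(\mu_s,\ldots,\mu_s,\mu_s-1,\ldots,1)$, where $\mu_s$ is repeated $n-\mu_s-s$ times. Lemma \ref{Ferrers} then gives $\pd(H'')=\max_k\{\lambda_k+k-1\}=n-s-1$, the maximum being attained (indeed constant) on the tail. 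Combining, $\pd(H_{x_s})=n-1-s+\pd(H')$. That $H'$ is an initial-closed graph is Lemma \ref{minus}(2); computing $\deg_{H'}(x_i)=\min(s-1,n-\mu_i-1)-i+1$ yields $\mu'_i=(s-i)-\deg_{H'}(x_i)=\max\{0,\mu_i-(n-s)\}$, and the bound $\mu_j\le n-2-j$ forces the two final entries of $\mu(H')$ to vanish.

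For the last assertion I would bound $\pd(H')$ from below. The set $\{x_1,\ldots,x_{s-1}\}$ is a vertex cover of $H'$ of size $s-1$, while the matching $\{x_iy_i\mid 1\le i\le s-1\}$ shows every vertex cover has size at least $s-1$; hence $\mathrm{ht}(I(H'))=s-1$ and $\pd(H')\ge s-1$, giving $\pd(H_{x_s})\ge n-2$. Equality holds if and only if $\pd(H')=s-1$, that is, if and only if $H'$ is Cohen--Macaulay, which by Proposition \ref{CM} is equivalent to $\mu(H')=0$, i.e. to $\mu_i\le n-s$ for all $i$, equivalently $\mu_1\le n-s$ since $\mu_1$ is the largest entry. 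The step I expect to be most delicate is the bookkeeping that produces the Ferrers graph $H''$ with the correct $\lambda$-sequence and the resulting constant value $n-s-1$ of $\pd(H'')$; the disjointness check and the lower bound are comparatively routine.
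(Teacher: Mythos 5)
Your proof is correct and follows essentially the same route as the paper's: the same splitting of $H_{x_s}$ into the low block $H'$ and the Ferrers graph $H''$ with $\lambda''_j=\min\{\mu_s,n-s-j\}$, the same computation $\pd(H'')=n-s-1$ via Lemma \ref{Ferrers}, the same use of Lemma \ref{minus} to identify $\mu(H')$, and the same appeal to Proposition \ref{CM} for the equality case. The only difference is that you fill in details the paper leaves implicit (the verification that no edges join the two blocks, and the vertex-cover/height argument for $\pd(H')\ge s-1$), which is harmless.
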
  
\begin{proof}   By the assumption,  $N_H(x_{s})= \{y_{s}, \ldots, y_{n-\mu_s-1}\}$, and so $H_{x_s}$ is the disjoint union of two subgraphs $H'$ and $H''$, where $H'$ (resp. $H''$) is an  induced subgraph of $H$ on vertex set $\{x_1,\ldots,x_{s-1}, y_1,\ldots,y_{s-1}\}$ (resp. $\{x_{s+1},\ldots, x_{n-1}, y_{n-\mu_s},\ldots,y_{n-1}\}$). Then we get $$\pd(H_{x_s})= \pd(H') +\pd(H'').$$

We know that $H''$ is a Ferrers graph with  $\lambda(H'')=(\lambda''_{1},\ldots,\lambda''_{n-s-1})$, where $\lambda_j'':=\deg_{H''}(x_{s+j})$ and $\lambda''_j= \min\{\mu_s, n-s-j\}$ for $1\le j\le n-s-1$. By  Lemma \ref{Ferrers}, we get    $  \pd (H'')  =    \max_{1\le j\le n-s-1}\{\lambda_{j}'' + j- 1\}  =  n-s-1.$ In fact, we always have  $\pd(H'') = \max\{n-s-1,\max_{1\le j\le n-s-2}\{\lambda''_j+j-1\}\}.$ 
Since     $\lambda_j''+j-1=\min\{\mu_s,n-s-j\} + j- 1\le n-s-1$ for all 
   $1\le j\le n-s-1$. Thus,  $\pd(H'') =  n-s-1$.

Moreover, by Lemma  \ref{minus},  $H'$ is  also  an  initial-closed graph with  $\mu(H') = (\mu_1',\ldots,\mu_{s-1}')$, where   $\mu'_j=\max\{0, \mu_j-(n-s)\}$ for all $1\le j\le s-1$.  Furthermore, we obtain  $\mu_{1}' \ge \cdots \ge \mu_{s-3}'\ge \mu_{s-2}' =  \mu_{s-1}' = 0$.   

We now  prove the last assertion. We always have  $ \pd(H')\ge s-1$, and thus $\pd(H_{x_s})\ge n-2$. The equality holds  if and only if   $\pd(H') = s-1$. From Proposition \ref{CM}, $H'$ is a Cohen-Macaulay graph. By Proposition \ref{CM}, $\mu(H')=(0,\ldots,0)$. It means that $\mu_1\le n-s$.
\end{proof}

\begin{lem} \label{mu2} Let $H$ be  an initial-closed graph.  If $s\ge 2$ and $\mu_1< n-s$, then 
$$\pd(H\backslash x_s) = \begin{cases} n-1, &\text{ if } \mu_1=\ldots=\mu_{s-1}=n-s-1\\
   1+\pd(H\backslash \{x_s, y_s\}), &\text{ otherwise. }\end{cases}
$$
 \end{lem}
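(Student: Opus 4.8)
The plan is to study $G := H\backslash x_s$ through the short exact sequence attached to the vertex $y_s$, splitting into the two cases of the statement, which I will see correspond to whether $H\backslash\{x_s,y_s\}$ is disconnected or connected. First I record the data at $y_s$ common to both cases. Because $\mu_1<n-s$ forces $\mu_i\le n-s-1$ for every $i\le s$, each such $x_i$ is adjacent to $y_s$; hence $N_H(y_s)=\{x_1,\ldots,x_s\}$ and $\deg_G(y_s)=s-1$. Deleting $y_s$ together with $x_1,\ldots,x_s$ from $H$ leaves the isolated vertices $y_1,\ldots,y_{s-1}$ and the induced graph on $\{x_{s+1},\ldots,x_{n-1}\}\cup\{y_{s+1},\ldots,y_{n-1}\}$, which (as $\mu_i=0$ for $i>s$) is the full staircase, a Cohen-Macaulay initial-closed graph on $n-1-s$ vertices per side; by Proposition \ref{CM} its projective dimension is $n-1-s$. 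Thus in both cases $\pd(G_{y_s})+\deg_G(y_s)=(n-1-s)+(s-1)=n-2$.

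In the first case, $\mu_1=\ldots=\mu_{s-1}=n-s-1$ (equivalently $\mu_{s-1}=n-s-1$), I would argue directly without the Depth Lemma. Here $N_H(x_i)=\{y_i,\ldots,y_s\}$ for $i\le s-1$ and $N_H(x_i)=\{y_i,\ldots,y_{n-1}\}$ for $i\ge s+1$, so no edge of $H\backslash x_s$ joins $\{x_1,\ldots,x_{s-1},y_1,\ldots,y_s\}$ to $\{x_{s+1},\ldots,x_{n-1},y_{s+1},\ldots,y_{n-1}\}$; that is, $H\backslash x_s=A\sqcup B$. The graph $B$ is the full staircase on $n-1-s$ vertices per side, so $\pd(B)=n-1-s$, while $A$ is a Ferrers graph: reversing the order of its $y$-vertices gives $\lambda(A)=(s,s-1,\ldots,2)$, and Lemma \ref{Ferrers} yields $\pd(A)=\max_j\{\lambda_j+j-1\}=s$. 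Since projective dimension is additive over disjoint unions, $\pd(H\backslash x_s)=\pd(A)+\pd(B)=s+(n-1-s)=n-1$.

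In the second case, $\mu_{s-1}\le n-s-2$, the crucial point is that $H\backslash\{x_s,y_s\}$ is connected: this bound gives $y_{s+1}\in N_H(x_{s-1})$, so the bridge $x_{s-1}-y_{s+1}-x_{s+1}$ links the vertices of index $<s$ to those of index $>s$, and together with the diagonal edges $x_iy_i$ ($i\ne s$) and the edges $x_iy_{i+1}$ (present because $\mu_i\le n-2-i$) this connects the whole graph. By Lemma \ref{minus}(1) it is then a connected initial-closed graph, and its associated vector is nonzero (its first entry equals $\mu_1\ge\mu_s>0$), so by Proposition \ref{CM} it is not Cohen-Macaulay. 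On the other hand the diagonal $\{x_iy_i:i\ne s\}$ is a perfect matching of this balanced bipartite graph, so K\"onig's theorem gives $\mathrm{height}\,I(H\backslash\{x_s,y_s\})=n-2$; since $\pd\ge\mathrm{height}$ always, with equality precisely in the Cohen-Macaulay case, I conclude $\pd(H\backslash\{x_s,y_s\})\ge n-1$.

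It remains to combine these facts. In the second case $\pd(G_{y_s})+\deg_G(y_s)=n-2<n-1\le\pd(H\backslash\{x_s,y_s\})=\pd(G\backslash y_s)$, so Lemma \ref{DepthLM}(5) applies and gives $\pd(H\backslash x_s)=\pd(H\backslash\{x_s,y_s\})+1$, as required. The main obstacle is the lower bound $\pd(H\backslash\{x_s,y_s\})\ge n-1$ in the second case: it cannot be read off from the sizes of minimal vertex covers alone, since in small examples the largest minimal vertex cover has only $n-2$ elements while the projective dimension equals $n-1$. This is exactly why the argument routes through the connectivity of $H\backslash\{x_s,y_s\}$, Proposition \ref{CM}, and the failure of the Cohen-Macaulay property, rather than through an explicit vertex cover.
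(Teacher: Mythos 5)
Your proof is correct and follows essentially the same route as the paper: the same case split on whether $\mu_{s-1}=n-s-1$, the same disjoint-union-of-Ferrers-graphs computation in the first case, and in the second case the same application of Lemma \ref{DepthLM}(5) at the vertex $y_s$, with the crucial lower bound $\pd(H\backslash\{x_s,y_s\})>n-2$ coming from non-Cohen-Macaulayness via Proposition \ref{CM}. The only differences are cosmetic: you spell out details the paper leaves implicit (connectivity of $H\backslash\{x_s,y_s\}$, and the K\"onig/height justification that failure of Cohen-Macaulayness pushes the projective dimension strictly above the codimension), and you use Proposition \ref{CM} instead of Lemma \ref{Ferrers} to compute the projective dimension of the staircase component.
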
 
\begin{proof}  Let $K:=H\backslash x_s$.  In order to prove this lemma, we divide the proof in two following cases:

\medskip
\noindent {\it Case 1:}  $\mu_{s-1}=n-s-1$. We have   $\mu_{s-1}\le \ldots\le \mu_1 <n-s$, and so $\mu_1=\ldots=\mu_{s-1}=n-s-1$. Hence $K$ is the  disjoint union of two Ferrers graphs $H'$ and $H''$, where $H'$ (resp. $H''$) is the  induced subgraph of $H$ on  $\{x_{1},\ldots, x_{s-1}, y_1,\ldots,y_s\}$ (resp. $\{x_{s+1},\ldots,x_{n-1},y_{s+1},\ldots,y_{n-1}\}$) with $\lambda(H') = (s,\ldots,2)\in \mathbb N^{s-1}$ (resp. $\lambda(H'')=(n-s-1,\ldots,1)\in \mathbb N^{n-s-1}$).   By Lemma \ref{Ferrers}, we obtain
 $$\pd(K) = \pd(H') + \pd(H'') = s +(n-s-1) =n-1.$$

\medskip
\noindent {\it Case 2:}  $\mu_{s-1}<n-s-1$. Thus, $H\backslash \{x_s,y_s\}$ is connected, and by Lemma \ref{minus}, $H\backslash \{x_s,y_s\}$ is also an  initial-closed graph. Note that $K\backslash y_s = H\backslash \{x_s,y_s\}$. Since $s\ge 2$ and $\mu_1<n-s$,  so  $\mu(K\backslash  y_s)= (\mu_1,\ldots,\mu_{s-1},0,\ldots,0)\in \mathbb N^{n-2}$. By Proposition \ref{CM},  $K\backslash  y_s$  is not Cohen-Macaulay.   Hence $\pd(K\backslash  y_s)>n-2$. 

Moreover, since   $\mu_1< n-s$,    $N_K(y_s) = \{x_1,\ldots,x_{s-1}\}$ and so $\deg_K(y_s)=s-1$.  Then $K_{y_s}$ is the union of  the isolated vertices $y_1,\ldots,y_{s-1}$ and a graph $K'$, where $K'$ is the  induced subgraph of $H$ on vertex set  $\{x_{s+1},\ldots,x_{n-1}, y_{s+1},\ldots,y_{n-1}\}$. Since $H$ is an initial-closed graph,   $K'$ is also an initial-closed  graph with  $\mu(K') = (0,\ldots,0)\in \mathbb N^{n-s-1}$. By Proposition \ref{CM},  $K'$ is Cohen-Macaulay and thus $\pd(K')=n-s-1$. Therefore, 
$\pd(K_{y_s})  = \pd(K')= n-s-1$.

By Lemma \ref{DepthLM}(5),  $\pd(K) = 1+\pd(K\backslash  y_s)$. On the other hand,  $\pd(H\backslash x_s) =   1+ \pd(H\backslash \{x_s, y_s\})$, which completes the proof of this  lemma.
 \end{proof}

\begin{thm} \label{main} Let $H$ be an  initial-closed graph. If $\mu_1< n-s$, then $$\pd(H) \le \max_{1\le j\le s}\{2(n-1)- (\mu_j+j)\}.$$
 In particular, if $\mu_s<\ldots<\mu_1<n-s$, then  $\pd(H) = 2(n-1)-(\mu_s+s)$.
 \end{thm}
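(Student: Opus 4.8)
The plan is to induct on $s$, the number of nonzero entries of $\mu(H)$, peeling off the vertex $x_s$. The base case $s=1$ is exactly Lemma \ref{lem1}(1), which gives $\pd(H)=2(n-1)-(\mu_1+1)$; since the right-hand maximum has only the $j=1$ term, both assertions hold. So I assume $s\ge 2$. First I would compute the localization term. By Lemma \ref{mu1_1}, $\pd(H_{x_s})=n-1-s+\pd(H')$, where $H'$ is the initial-closed graph on $\{x_1,\ldots,x_{s-1}\}\cup\{y_1,\ldots,y_{s-1}\}$ with $\mu'_j=\max\{0,\mu_j-(n-s)\}$. The hypothesis $\mu_1<n-s$ forces every $\mu'_j=0$, so $H'$ is Cohen--Macaulay with $\pd(H')=s-1$ by Proposition \ref{CM}, giving $\pd(H_{x_s})=n-2$. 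Since $\deg_H(x_s)=n-s-\mu_s$, this yields
$$\pd(H_{x_s})+\deg_H(x_s)=2(n-1)-(\mu_s+s),$$
which is precisely the $j=s$ term of the target maximum.

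Next I would bound the deletion term $\pd(H\backslash x_s)$ via Lemma \ref{mu2}. In its first case, $\mu_1=\cdots=\mu_{s-1}=n-s-1$, we get $\pd(H\backslash x_s)=n-1$, and then $1+\pd(H\backslash x_s)=n\le 2(n-1)-(\mu_1+1)=n+s-2$ since $s\ge 2$, so this term is dominated by $\max_{1\le j\le s}\{2(n-1)-(\mu_j+j)\}$. In the second case, $\pd(H\backslash x_s)=1+\pd(H\backslash\{x_s,y_s\})$. The crucial bookkeeping step is to identify $H\backslash\{x_s,y_s\}$: recomputing degrees (using $\mu_1<n-s$ to see that exactly the vertices $x_1,\ldots,x_{s-1}$ lose their edge to $y_s$) shows it is an initial-closed graph on $n-1$ columns with $\mu$-vector $(\mu_1,\ldots,\mu_{s-1},0,\ldots,0)$, hence with parameters $\tilde n=n-1$, $\tilde s=s-1$ and $\tilde\mu_1=\mu_1<n-s=\tilde n-\tilde s$. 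The induction hypothesis then applies, and after adding $2$ gives $1+\pd(H\backslash x_s)\le\max_{1\le j\le s-1}\{2(n-1)-(\mu_j+j)\}$. Feeding both terms into Lemma \ref{DepthLM}(2) produces the upper bound $\pd(H)\le\max_{1\le j\le s}\{2(n-1)-(\mu_j+j)\}$.

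For the equality under $\mu_s<\cdots<\mu_1<n-s$, strictness makes $\mu_j+j$ nonincreasing in $j$ (because $\mu_j-\mu_{j+1}\ge 1$), so the maximum is attained at $j=s$ and the bound reads $\pd(H)\le 2(n-1)-(\mu_s+s)$. For the reverse inequality I would invoke Lemma \ref{DepthLM}(4). Strictness forces $\mu_{s-1}<n-s-1$, so we are in the second case of Lemma \ref{mu2}, and the inductive \emph{equality} applied to $H\backslash\{x_s,y_s\}$ gives $1+\pd(H\backslash x_s)=2(n-1)-(\mu_{s-1}+s-1)$. The hypothesis $1+\pd(H\backslash x_s)\le\pd(H_{x_s})+\deg_H(x_s)$ of Lemma \ref{DepthLM}(4) then reduces exactly to $\mu_{s-1}>\mu_s$, which is our strict assumption; hence $\pd(H)=\pd(H_{x_s})+\deg_H(x_s)=2(n-1)-(\mu_s+s)$.

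I expect the main obstacle to be the middle step: correctly determining the initial-closed structure and the $\mu$-vector of $H\backslash\{x_s,y_s\}$ after relabeling the remaining columns, and verifying that the inductive hypothesis $\tilde\mu_1<\tilde n-\tilde s$ is inherited. Once that identification is in place, the rest is a clean assembly of Lemmas \ref{DepthLM}, \ref{mu1_1} and \ref{mu2}, with the strict-inequality case distinguished only by which branch of Lemma \ref{mu2} fires and by the arithmetic reduction to $\mu_{s-1}>\mu_s$.
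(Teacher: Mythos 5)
Your proof follows the paper's own argument almost step for step: induction on $s$, the computation $\pd(H_{x_s})+\deg_H(x_s)=2(n-1)-(\mu_s+s)$ via Lemma \ref{mu1_1} and Proposition \ref{CM}, the two-case analysis of $\pd(H\backslash x_s)$ via Lemma \ref{mu2}, assembly through Lemma \ref{DepthLM}(2), and Lemma \ref{DepthLM}(4) for the equality. The upper-bound half is correct, including the identification of $\mu(H\backslash\{x_s,y_s\})=(\mu_1,\ldots,\mu_{s-1},0,\ldots,0)$ with parameters $\tilde n=n-1$, $\tilde s=s-1$, and the check that $\tilde\mu_1<\tilde n-\tilde s$ is inherited.

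The gap is in the equality part, namely the claim that strictness forces $\mu_{s-1}<n-s-1$, so that only the second case of Lemma \ref{mu2} can occur. This is true for $s\ge 3$ (since then $\mu_{s-1}\le\mu_1-(s-2)\le n-s-1-(s-2)<n-s-1$), but it fails for $s=2$: there $\mu_{s-1}=\mu_1$, and strictness only gives $\mu_2<\mu_1\le n-s-1$, so $\mu_1=n-s-1$ is possible. The paper's own running example $\mu(H)=(3,1,0,0,0)$ (so $n=6$, $s=2$) is exactly such a case: $\mu_2=1<\mu_1=3<n-s=4$, yet $\mu_1=n-s-1$. There you land in the \emph{first} case of Lemma \ref{mu2}, and your derivation breaks at two points: the identity $\pd(H\backslash x_s)=1+\pd(H\backslash\{x_s,y_s\})$ is not what Lemma \ref{mu2} provides, and $H\backslash\{x_s,y_s\}$ is disconnected, hence not an initial-closed graph, so the inductive equality cannot be applied to it at all. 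The patch is cheap: in that case Lemma \ref{mu2} gives $\pd(H\backslash x_s)=n-1$ directly, and $1+\pd(H\backslash x_s)=n\le 2(n-1)-(\mu_s+s)=\pd(H_{x_s})+\deg_H(x_s)$ because $\mu_s\le n-2-s$; so Lemma \ref{DepthLM}(4) still yields $\pd(H)=2(n-1)-(\mu_s+s)$. (In fairness, the paper's write-up of the equality also silently invokes only the second case of Lemma \ref{mu2}; but since you asserted the exclusion explicitly and it is false, your proof needs this extra case to be complete.)
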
 
\begin{proof} We prove by   induction on $s$. If  $s=1$,   by Proposition \ref{equal_mu},  $\pd(H)=2(n-1)-(\mu_1+1)$.  Now we assume that   $s\ge 2$.  From Lemma \ref{mu1_1} and the assumption,  $\pd(H_{x_s}) =  n-2$. Note that  $N_H(x_s) = \{y_s,\ldots,y_{n-1-\mu_s}\}$, and so $\deg_H(x_s)=n-\mu_s-s$. 

Next we consider two following cases: 

\medskip
\noindent{\it Case 1:} $\mu_{s-1}=n-s-1$. By the assumption, $\mu_1=\ldots=\mu_{s-1}=n-s-1$. From  Lemma \ref{mu2}, we have $\pd(H\backslash x_s) = n-1$.  Hence, by Lemma \ref{DepthLM}(2), we have $ \pd(H)\le \max\{2(n-1)- (\mu_s+s), n\} = 2(n-1)- (\mu_s+s)\le \max_{1\le j\le s}\{2(n-1)- (\mu_j+j)\}.$

\medskip
\noindent{\it Case 2:} $\mu_{s-1}<n-s-1$. By Lemma \ref{mu2},    $\pd(H\backslash x_s) =1+\pd(H\backslash \{x_s,y_s\})$, and  $H\backslash \{x_s,y_s\}$ is also an  initial-closed graph with $\mu(H\backslash \{x_s,y_s\})= (\mu_1,\ldots,\mu_{s-1},0,\ldots,0)\in \mathbb N^{n-2}$. By the induction hypothesis,  $ \pd(H\backslash \{x_s,y_s\})  \le \max_{1\le j\le s-1}\{2(n-2)-(\mu_j+j)\}.$ 
From  Lemma \ref{DepthLM}(2), we have    $\pd(H) \le   \max\{2(n-1)-(\mu_s+s),\max_{1\le j\le s-1}\{2(n-2)-(\mu_j+j)\}+2\}  =  \max_{1\le j\le s}\{2(n-1)-(\mu_j+j)\}.
$ 

We shall now prove the last statement of this theorem. Assume  $\mu_s<\ldots<\mu_1<n-s$. Then  $\mu_s+s \le \ldots\le \mu_1+1$, and so $ \max_{1\le j\le s}\{2(n-1)-(\mu_j+j)\}=2(n-2)-(\mu_s+s)$. From Lemma \ref{mu2} and the above assertion,  we have 
\begin{eqnarray*}
\pd(H\backslash x_s)&=& 1+ \pd(H\backslash \{x_s,y_s\})\\
&\le& 1+ \max_{1\le j\le s-1} \{2(n-2)-(\mu_j+j)\} =  1+ 2(n-2)- (\mu_{s-1}+(s-1))\\
&\le& -1+ 2(n-1)- (\mu_s+s) = -1+ \pd(H_{x_s})+\deg_H(x_s).
\end{eqnarray*}
  By Lemma \ref{DepthLM}(4),   $\pd(H) =2(n-1)-(\mu_s+s)$, and the proof of theorem is complete.  
  \end{proof}

\section{Lower bound for projective dimension} 

The purpose of this section is to calculate Betti numbers of initial-closed graphs $H$, so that we give an algorithm (see Algorithm \ref{Algorithm}) using the biadjacency matrix of $H$. This algorithm is distinct to the algorithm given in \cite[section 2.4]{NR}. With this algorithm, we can obtain in some cases an explicit formula for the projective dimension of $R/I(H)$ in despite of the algorithm in \cite{NR}, where the formula obtained for the projective dimension is not explicit as in \cite[Proposition 2.26]{NR}.

Let $G$ be a bipartite graph with   bipartition $\{x_1,\ldots,x_n\}\cup \{y_1,\ldots,y_m\}$. The biadjacency matrix of $G$,  $M(G)= (a_{i,j}) \in \mathcal M_{n,m}(\{0,1\})$, is defined by $a_{i,j}=1$ if $\{x_i,y_j\}\in E(G)$, 0 otherwise. 
 
\begin{lem} { \rm (\cite[Lemma 1.6]{FG})}  \label{matrix}  Let $G$ be a bipartite graph, biadjacency matrix $M=M(G) = (a_{i,j}) \in \mathcal M_{n,m}(\{0,1\})$.   
\begin{enumerate}
\item If $M$ has a row or a column whose entries are all $0$, then $\widetilde{H}_i(\Delta(G);\kk)={\bf 0}$   for all $i\ge 0$.
\item If there exists $r$ and $c$ such that $a_{r,c}=1$ and the rest of entries on the row $r$ and the column $c$ are zeros,   $\widetilde{H}_i(\Delta(G);\kk) \cong \widetilde{H}_{i-1}(\Delta(G\backslash \{x_r,y_c\});\kk)$, for all $i>0$. 
\item If $M$ has two rows $r$ and $r'$ (resp. two columns $c$ and $c'$) such that $\{j: a_{r,j}=0\}\subset \{j: a_{r',j}=0\}$ (resp.  $\{i: a_{i,c}=0\}\subset \{j: a_{i,c'}=0\}$), then 
 $\widetilde{H}_i(\Delta(G);\kk) = \widetilde{H}_i(\Delta(G\backslash x_r);\kk) \text{ (resp. $\widetilde{H}_i(\Delta(G);\kk) = \widetilde{H}_i(\Delta(G\backslash y_c);\kk)$)},$    for all $i\ge 0$.
\end{enumerate} 
\end{lem}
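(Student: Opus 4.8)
The plan is to pass from the biadjacency matrix $M$ to the independence complex $\Delta(G)$ and argue topologically, using three standard facts already available in the present setting: a simplicial complex with a cone point has vanishing reduced homology; the join formula $\Delta(G)=\Delta(G_1)*\Delta(G_2)$ for a disjoint decomposition, together with the K\"unneth isomorphism \cite[Proposition 3.2]{BG}; and the reduced Mayer--Vietoris sequence attached to the cover of $\Delta(G)$ by the deletion and the closed star of a vertex. First I would translate each matrix hypothesis into a graph statement: a zero row (column) says that some $x_r$ (some $y_c$) is an isolated vertex; the hypothesis in (2) says $N_G(x_r)=\{y_c\}$ and $N_G(y_c)=\{x_r\}$, so $\{x_r,y_c\}$ is an isolated edge, i.e. an entire connected component; and the containment $\{j:a_{r,j}=0\}\subset\{j:a_{r',j}=0\}$ in (3) says exactly $N_G(x_{r'})\subseteq N_G(x_r)$.

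For (1), an isolated vertex $x_r$ (or $y_c$) can be adjoined to every independent set, so $\Delta(G)$ is a cone with that vertex as apex; being contractible, all its reduced homology groups vanish. For (2), since $\{x_r,y_c\}$ is a connected component, the join formula gives $\Delta(G)=\Delta(E)*\Delta(G\setminus\{x_r,y_c\})$, where $E$ is the single edge on $\{x_r,y_c\}$. The complex $\Delta(E)$ consists of the two vertices $x_r,y_c$ as isolated points, i.e. a $0$-sphere, whose only nonzero reduced homology is $\widetilde{H}_0(\Delta(E);\kk)\cong\kk$. Feeding this into the K\"unneth isomorphism for the join collapses the direct sum to the single surviving summand, with a degree shift by one, yielding $\widetilde{H}_i(\Delta(G);\kk)\cong\widetilde{H}_{i-1}(\Delta(G\setminus\{x_r,y_c\});\kk)$ for $i>0$.

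Part (3) is where the real work lies, and I would handle it by Mayer--Vietoris. Cover $\Delta(G)$ by $\Delta(G\setminus x_r)$ and the closed star of $x_r$; the star is a cone (hence contractible) and the intersection of the two pieces is the link of $x_r$, which equals $\Delta(G_{x_r})$ with $G_{x_r}=G\setminus N_G[x_r]$ in the notation of Section 1. The crucial observation is that the domination hypothesis $N_G(x_{r'})\subseteq N_G(x_r)$ forces $x_{r'}$ to become an \emph{isolated} vertex of $G_{x_r}$: every neighbor of $x_{r'}$ lies in $N_G(x_r)\subseteq N_G[x_r]$ and is therefore deleted, while $x_{r'}\notin N_G[x_r]$ because $x_{r'}$ and $x_r$ lie in the same part of the bipartition and so are non-adjacent. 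By part (1) the link is then contractible, so in the reduced Mayer--Vietoris sequence both the link term and the (contractible) star term vanish, leaving $\widetilde{H}_i(\Delta(G\setminus x_r);\kk)\cong\widetilde{H}_i(\Delta(G);\kk)$ for all $i$. The column case is identical after exchanging the roles of the two parts. The main obstacle, and the step I would verify most carefully, is exactly this identification of the link as a cone: one must confirm the direction of the containment so that it is $x_r$ (the vertex with the larger neighborhood) that is deleted, while its dominated partner $x_{r'}$ survives and supplies the cone point of the link.
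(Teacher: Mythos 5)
Your proof is correct, but note that the paper itself offers no proof of this lemma: it is quoted directly from \cite[Lemma 1.6]{FG}, so the only possible comparison is with that cited source rather than with an argument inside the paper. Your translation of the matrix hypotheses into graph statements is exactly right, and each topological step is sound: in (1) an isolated vertex is a cone point of $\Delta(G)$; in (2) the isolated edge $\{x_r,y_c\}$ splits off as a connected component, and the join decomposition $\Delta(G)=\Delta(E)*\Delta(G\setminus\{x_r,y_c\})$ together with the K\"unneth isomorphism (the same tool the paper uses in Proposition \ref{depth-extremal}, citing \cite[Proposition 3.2]{BG}) produces the degree shift because $\Delta(E)$ is a $0$-sphere; in (3) the Mayer--Vietoris sequence for the cover of $\Delta(G)$ by the deletion $\Delta(G\setminus x_r)$ and the closed star of $x_r$, whose intersection is the link $\Delta(G_{x_r})$, gives the claim once one sees that the dominated vertex $x_{r'}$ is isolated in $G_{x_r}$ (bipartiteness guaranteeing $x_{r'}\notin N_G[x_r]$), so the link has vanishing reduced homology by (1). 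You also handled correctly the one step where it is easy to go wrong: since row $r$ has fewer zeros, $N_G(x_{r'})\subseteq N_G(x_r)$, and it is the dominating vertex $x_r$ that must be deleted while $x_{r'}$ survives as the cone point of the link; your worked-out direction matches the lemma. The usual proof of (3) in the literature is the fold (domination) lemma for independence complexes, which yields the stronger conclusion of a homotopy equivalence $\Delta(G)\simeq\Delta(G\setminus x_r)$; your Mayer--Vietoris argument yields only an isomorphism in homology with field coefficients, but that is all the statement asserts, and it has the merit of using exactly the elementary toolkit (cones, joins, Mayer--Vietoris) already present in the paper.
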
 

Let $H$ be a  bipartite graph with bipartition $\{x_1,\ldots,x_n\}\cup \{y_1,\ldots,y_m\}$.  Let $\lambda=(\lambda_1,\ldots,\lambda_n)$ be a partition and let $\mu=(\mu_1,\ldots,\mu_n)$  be a vector such that $\mu_1\ge \ldots\ge \mu_n\ge 0$,   $\mu_i\le \lambda_i$ for all $1\le i\le n$, and $\lambda_1=m$.  Then we call  $H$  {\it  skew Ferrers graph} if  its edge ideal is  
$$I(H) = (x_iy_{m-\mu_i},\ldots,x_iy_{m-\lambda_i+1}\mid 1\le i\le n).$$
  The skew Ferrers graphs have a long tradition in combinatorics according to skew Ferrers diagrams, see for example \cite{DF,MA}. Note that if $m=n$, $\mu_i\le n-1-i$ for $1\le i\le n-2$, $\mu_{n-1}= \mu_n=0$ and $\lambda_i=n+1-i$ for all $i$, then   $H$ is an initial-closed graph. If $\mu_i=0$ for all $i$, then $H$ is a Ferrers graph. 
  
\begin{alg} \label{Algorithm}   \textit{Input:}   Let $H$ be a  skew Ferrers graph. \\ 
\textit{Output:} An induced matching  $U$ of $H$, and a subset $S$ of $V(H)$ such that   $\widetilde{H}_i(\Delta(H);\kk) ={\bf 0} \text{ if } S\ne \emptyset$, and 
$$\widetilde{H}_i(\Delta(H\backslash S);\kk)  \cong \begin{cases}   \kk &\text{ if } i=|U|-1 \\ {\bf 0} &\text{otherwise.}  \end{cases} $$ 

\medskip

\noindent  Let $U:=\emptyset$ and $S:=\emptyset$.   

\begin{algorithm}[H]
 \While{$H \ne \emptyset$}{
We assume  bipartition of $H$ is $\{x_{i_1},\ldots,x_{i_u}\}\cup \{y_{j_1},\ldots,y_{j_v}\}$, where $i_1<\ldots<i_u,$   and  $j_1<\ldots<j_v$\;  
$U:=U\cup \{\{x_{i_u}, y_{j_v}\}\}$\;
 $S:=S\cup \{y_j\in V(H\backslash N_H(\{x_{i_u}, y_{j_v}\})) \mid$  the columm  with labelling  $y_j$   in $M(H\backslash N_H(\{x_{i_u}, y_{j_v}\}))$   is zero\}\;  
$H:= H\backslash (S\cup  N_H(\{x_{i_u}, y_{j_v}\}))$\; 
 } 
 \end{algorithm}
\noindent \textbf{return}($U, S$) 
 \end{alg}
\begin{proof} For each loop of the algorithm, we claim  that 
\begin{eqnarray}\label{eqn1} 
\widetilde{H}_i(\Delta(H);\kk) &\cong&  \widetilde{H}_{i-1}(\Delta(H\backslash N_H(\{x_{i_u},y_{j_v}\}));\kk),\\
\widetilde{H}_i(\Delta(H\backslash S);\kk) &\cong& \widetilde{H}_{i-1}(\Delta(H\backslash (S\cup  N_H(\{x_{i_u},y_{j_v}\})));\kk). \label{eqn2} 
\end{eqnarray}
Indeed,  let  $\alpha\in N_H(\{x_{i_u}, y_{j_v}\})\backslash \{x_{i_u}, y_{j_v}\}$, and so $\alpha=x_r$ or $\alpha = y_c$ for some $r\ne i_u$ and $c\ne j_v$. Without loss of generality, we may assume  $\alpha=x_r$, where $r\ne i_u$.  Since $H$ is a skew Ferrers graph,   $\{j \mid a_{r,j}=0\} \subseteq \{j\mid a_{i_u,j}=0\}$.  By Lemma \ref{matrix}(3), we have  $\widetilde{H}_i(\Delta(H);\kk)\cong \widetilde{H}_i(\Delta(H\backslash x_r);\kk).$ Repeating this  process for all $\alpha\in N_H(\{x_{i_u}, y_{j_v}\})\backslash \{x_{i_u}, y_{j_v}\}$, we obtain 
 $$\widetilde{H}_i(\Delta(H);\kk)\cong \widetilde{H}_i(\Delta(H\backslash (N_H(\{x_{i_u}, y_{j_v}\})\backslash \{x_{i_u}, y_{j_v}\}));\kk).$$  
By Lemma \ref{matrix}(2),   $\widetilde{H}_i(\Delta(H);\kk) \cong  \widetilde{H}_{i-1}(\Delta(H\backslash N_H(\{x_{i_u},y_{j_v}\}));\kk)$,  as the first assertion.

Let $U:=\{x_{i_u},y_{j_v}\}$, and  $S$ be a set containing $y_j\in V(H\backslash N_H(\{x_{i_u},y_{j_v}\}))$ such that  the  columm with labelling $y_j$ in $M(H\backslash N_H(\{x_{i_u},y_{j_v}\}))$  is zero.  Note that if  $S\ne \emptyset$, then by Lemma \ref{matrix}(1) and equality (1),   $\widetilde{H}_i(\Delta(H);\kk)=0$. Furthermore, $\widetilde{H}_i(\Delta(H\backslash S);\kk) \cong \widetilde{H}_{i-1}(\Delta(H\backslash (S\cup N_H(\{x_{i_u},y_{j_v}\})));\kk)$, which completes the proof of   the second claim.

Repeating the above loop $|U|$ times, by equality (\ref{eqn1}),     $\widetilde{H}_i(\Delta(H);\kk) =0$ if $S\ne \emptyset$; and furthermore by equality  (\ref{eqn2})  we have   $$\widetilde{H}_i(\Delta(H\backslash S);\kk)  \cong   \cdots  \cong   \widetilde{H}_{i-|U|}(\{\emptyset\};\kk) = \begin{cases} \kk &\text{ if } i=|U|-1\\ 
 {\bf 0} &\text{otherwise.}\end{cases}
$$  \end{proof}

\begin{rem} \label{main-thm2}
When we apply this algorithm to a skew Ferrers graph $H$ and using Hochster formula we get $\beta_{i,j}(H)$ is independent on the  base field for all $i,j$. This result was obtained also by Nagel and Reiner in \cite[Corollary 2.16]{NR}. 
\end{rem} 
 
Let $H$ be a skew Ferrers graph.  By Algorithm \ref{Algorithm}, the edge set of $H$ can be partitioned into subsets as follows:
$E(H) = \bigcup_{e\in U} E_{e},$
where $E_{\{x_i,y_j\}} = \{\{x_k,y_l\}\in E(H)\mid k\le i, l\le j, \text{ and }
 \{x_k,y_l\}\cap N_H(\{x_i,y_j\})\ne \emptyset\}$. This partition is called {\it rectangular decomposition}, which  is similar to the rectangular decomposition given in \cite[Section 2.4]{NR}.
 
 \begin{exam}  A skew Ferrers diagram in Figure \ref{fig3} with  $\mu=(4,2,1,1,0,0)$ and $\lambda=(6,5,4,4,2,1)$. Applying Algorithm \ref{Algorithm},   $U=\{\{x_6,y_6\}, \{x_4,y_5\}, \{x_2,y_2\}\}$ and $S=\{y_1\}$. Then $E_{\{x_2,y_2\}} = \{\{x_1,y_1\}, \{x_1,y_2\},  \{x_2,y_2\} \}$, $E_{\{x_6,y_6\}} = \{\{x_5,y_5\}, \{x_5,y_6\},  \{x_6,y_6\} \}$, and 
 $E_{\{x_4,y_5\}} = \{\{x_2,y_3\}, \{x_2,y_4\}, \{x_3,y_3\},  \{x_3,y_4\},  \{x_3,y_5\},   \{x_4,y_3\}, \{x_4,y_4\}, \{x_4,y_5\} \}$. 
\begin{center}
\begin{figure}[H]
 \includegraphics[scale=0.5]{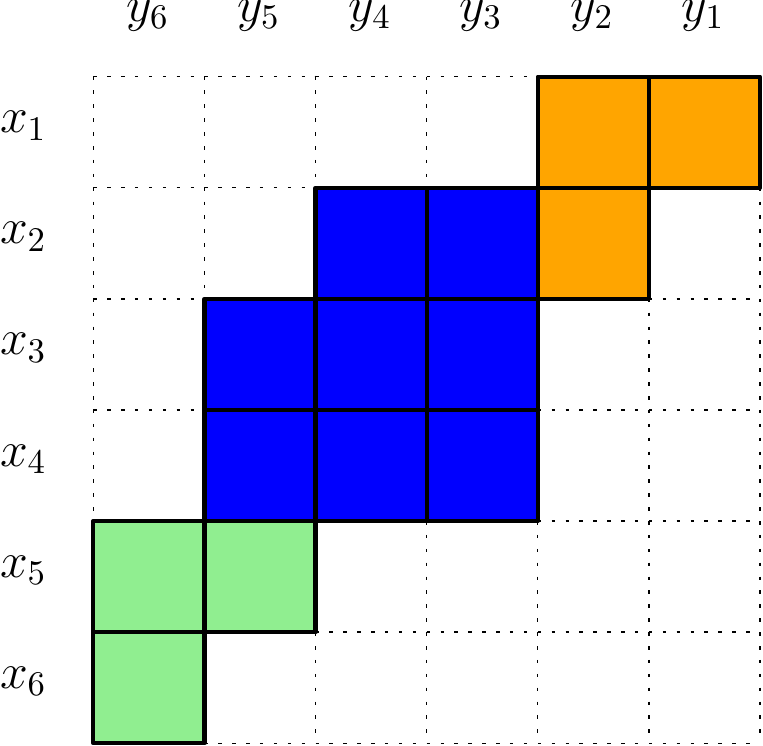}  
   \caption{Rectangular decomposition of a skew Ferrers diagram} \label{fig3}
 \end{figure} 
\end{center}
\end{exam} 

\begin{lem}  \label{Dirichlet}
If $e\in U$, then   $\im(H[E_e])=1$.
\end{lem}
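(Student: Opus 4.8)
The plan is to prove the equivalent statement that $H[E_e]$ contains no induced matching of size $2$; since $e\in E_e$ we have $\im(H[E_e])\ge 1$, so this gives $\im(H[E_e])=1$. Write $e=\{x_i,y_j\}$, and recall that $\{x_i,y_j\}\in E(H)$ forces $m-\lambda_i+1\le j\le m-\mu_i$. The first step is to record the combinatorial shape of $E_e$. Since $H$ is bipartite, $N_H(\{x_i,y_j\})$ consists of $x_i$, $y_j$, the vertices $x_k$ with $\{x_k,y_j\}\in E(H)$, and the vertices $y_l$ with $\{x_i,y_l\}\in E(H)$; hence an edge $\{x_k,y_l\}\in E(H)$ with $k\le i$ and $l\le j$ lies in $E_e$ if and only if $\{x_k,y_j\}\in E(H)$ (call such an edge of \emph{type R}) or $\{x_i,y_l\}\in E(H)$ (\emph{type C}), and every edge of $E_e$ is of one of these two types (possibly both). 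The second structural input I would use is that for a skew Ferrers graph both the rows and the columns of $M(H)$ have \emph{interval support}: each $x_k$ is adjacent exactly to the $y_l$ with $l$ in the interval $[m-\lambda_k+1,m-\mu_k]$, and, because $\mu_1\ge\cdots\ge\mu_n$ and $\lambda_1\ge\cdots\ge\lambda_n$, for each fixed $l$ the rows $k$ with $\{x_k,y_l\}\in E(H)$ also form an interval.

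Next I would take two disjoint edges $f_1=\{x_{k_1},y_{l_1}\}$ and $f_2=\{x_{k_2},y_{l_2}\}$ of $E_e$ and exhibit one of the cross edges $\{x_{k_1},y_{l_2}\}$, $\{x_{k_2},y_{l_1}\}$ inside $E_e$; its presence prevents $\{f_1,f_2\}$ from being an induced matching. I split into three cases by type. If both $f_1,f_2$ are type R, say $l_1<l_2$, then $l_1$ and $j$ both lie in the support interval of row $k_1$ (the former since $\{x_{k_1},y_{l_1}\}\in E(H)$, the latter by type R), hence so does $l_2\in[l_1,j]$; thus $\{x_{k_1},y_{l_2}\}\in E(H)$, and being type R with $l_2\le j$ it lies in $E_e$. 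The case of both type C is symmetric: one places $k_2\in[k_1,i]$ in the support interval of \emph{column} $l_1$ and concludes $\{x_{k_2},y_{l_1}\}\in E_e$, which is exactly where the interval support of columns (powered by the monotonicity of $\mu$ and $\lambda$) is needed.

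The remaining mixed case is the one I would treat as the crux, though it turns out clean: say $f_1$ is type R ($\{x_{k_1},y_j\}\in E(H)$) and $f_2$ is type C ($\{x_i,y_{l_2}\}\in E(H)$). I would show $\{x_{k_1},y_{l_2}\}\in E_e$ directly. For the upper end, $l_2\le j\le m-\mu_{k_1}$ because $\{x_{k_1},y_j\}\in E(H)$; for the lower end, $k_1\le i$ gives $\lambda_{k_1}\ge\lambda_i$, so $m-\lambda_{k_1}+1\le m-\lambda_i+1\le l_2$, the last step because $\{x_i,y_{l_2}\}\in E(H)$. Hence $l_2$ lies in the support interval of row $k_1$, so $\{x_{k_1},y_{l_2}\}\in E(H)$, and being type R with $k_1\le i$, $l_2\le j$ it belongs to $E_e$. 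The main thing to get right is the bookkeeping: in each case one must verify that the produced cross edge meets all three defining conditions of $E_e$, namely the index bounds $k\le i$ and $l\le j$ together with membership in type R or type C, and one must invoke interval support in the correct direction (along rows in the first case, along columns in the second). I expect the hypotheses that $\mu$ is weakly decreasing and $\lambda$ is a partition to be precisely what makes the column supports intervals, and hence what makes the symmetric (both type C) case go through.
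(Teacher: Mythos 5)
Your proof is correct and follows essentially the same route as the paper's: both arguments show that any two disjoint edges of $E_e$ are joined by a cross edge lying in $E_e$, split into the same cases according to which endpoint of each edge meets $N_H(\{x_i,y_j\})$ (your ``type R/type C''), and produce the cross edge from the interval support of rows and columns together with the monotonicity of $\mu$ and $\lambda$. The only differences are cosmetic and, if anything, in your favor: you argue directly from the skew Ferrers definition---the generality at which the lemma is actually stated---whereas the paper cites Lemmas \ref{neibour}(1) and \ref{def-ini}(1), which are formulated for (initial-)closed graphs, and your mixed case dispenses with the paper's extra sub-split on the relative order of the two $y$-indices.
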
 
\begin{proof}  Let $e=\{x_i, y_j\}$.  Assume on the contrary that there are two   edges  $e_1:=\{x_{u_1}, y_{v_1}\}$ and $e_2:=\{x_{u_2}, y_{v_2}\}$ in $E_{e}$ such that  $\{e_1, e_2\}$ is an induced matching of $H$.  Then $u_1\ne u_2$ and $v_1\ne v_2$.  Without loss of   generality,  we assume $u_1<u_2$. Since $e_1, e_2\in E_{e}$, so $u_1<u_2\le i$, $v_1, v_2\le j$,  $\{x_{u_1},y_{v_1}\}\cap N_H(\{x_{i}, y_j\})\ne \emptyset$ and  $\{x_{u_2},y_{v_2}\}\cap N_H(\{x_{i}, y_j\})\ne \emptyset$.  In order to prove this lemma  we divided the proof in two following cases:
 
 \medskip

\noindent {\it Case 1.}  $x_{u_1}, x_{u_2}\in N_H(y_j)$; or $y_{v_1}, y_{v_2}\in N_H(x_i)$.  We will  prove for the case $y_{v_1}, y_{v_2}\in N_H(x_i)$, the remain case will be proved  similarly. Since $\{x_i, y_{v_1}\}$ and $\{x_{u_1},y_{v_1}\}$ are two edges of $H$, by Lemmas \ref{neibour}(1) and \ref{def-ini}(1),  $\{x_{u_1}, x_{u_1+1},\ldots, x_{i}\} \subseteq N_H(y_{v_1})$. Thus  $\{x_{u_2},y_{v_1}\}\in E(H)$, a contradiction.

\medskip
 
\noindent {\it Case 2.} $x_{u_1}\in  N_H(y_j), y_{v_2}\in N_H(x_i)$; or    $x_{u_2}\in  N_H(y_j), y_{v_1}\in N_H(x_i)$. We  will  prove for the first case, and the remain case will be proved similarly. If $v_2<v_1$, then $\{x_{u_2},y_{v_1}\}\in E(H)$ because $\mu_{u_2}\le \mu_{u_1}$. 
 It implies that $\{e_1,e_2\}$ is not an  induced matching of $H$, a contradiction. Therefore, $v_2>v_1$, and thus in this case $v_1<v_2\le j$. Recall $\{x_{u_1},y_{v_1}\}, \{x_{u_1},y_{j}\}\in E(H)$. By     Lemmas \ref{neibour}(1) and \ref{def-ini}(1), $\{y_{v_1},y_{v_1+1},\ldots, y_{j}\}\subseteq N_H(x_{u_1})$, and so $\{x_{u_1},y_{v_2}\}\in E(H)$, a contradiction. 
  \end{proof}

Applying   the Algorithm \ref{Algorithm} for  an initial-closed graph $H$, we get

\begin{thm} \label{Betti} Let $H$ be an initial-closed graph. Then    
$$\beta_{2(n-1)-|U|-|S|,2(n-1)-|S|}(H)\ne 0.$$
In particular,  $\reg(H) = \im(H)= |U|$, and   $\pd(H)\ge 2(n-1)-(|U|+|S|)$.
\end{thm}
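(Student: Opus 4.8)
The plan is to read off the asserted Betti number directly from Hochster formula, using the homological information that Algorithm \ref{Algorithm} already supplies, and then to extract the regularity and projective-dimension statements as formal consequences. Write $i=2(n-1)-|U|-|S|$ and $j=2(n-1)-|S|$; since $|V(H)|=2(n-1)$, the set $W:=V(H)\backslash S$ has cardinality exactly $j$ and satisfies $H[W]=H\backslash S$. The key numerical observation is that $j-i-1=|U|-1$, so the summand of Hochster formula indexed by this particular $W$ is precisely $\dim_{\kk}\widetilde{H}_{|U|-1}(\Delta(H\backslash S);\kk)$.

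First I would apply Algorithm \ref{Algorithm} to $H$: its output guarantees $\widetilde{H}_{|U|-1}(\Delta(H\backslash S);\kk)\cong\kk$. Feeding this into Hochster formula, the term attached to $W=V(H)\backslash S$ contributes $1$, while every other term is non-negative; hence $\beta_{i,j}(H)\ge 1\ne 0$, which is the displayed inequality. The lower bound $\pd(H)\ge 2(n-1)-(|U|+|S|)$ is then immediate, since $\pd(H)$ is the largest homological degree carrying a non-zero Betti number and we have just exhibited one in homological degree $i$.

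For the regularity statement I would establish $\reg(H)=\im(H)=|U|$ in three steps. By \cite[Theorem 2.2]{EZ} one has $\reg(H)=\im(H)$. Since $U$ is an induced matching of $H$ (part of the output of Algorithm \ref{Algorithm}), $\im(H)\ge |U|$. For the reverse inequality I would invoke the rectangular decomposition $E(H)=\bigcup_{e\in U}E_e$ together with Lemma \ref{Dirichlet}, which gives $\im(H[E_e])=1$ for each $e\in U$: any induced matching $M$ of $H$ placing two of its edges in the same block $E_e$ would restrict to an induced matching of size $2$ inside $H[E_e]$, contradicting $\im(H[E_e])=1$; by the pigeonhole principle $|M|\le|U|$, so $\im(H)\le|U|$. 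Combining the two inequalities yields $\reg(H)=\im(H)=|U|$, consistent with the fact that the Betti number found above already lies on the line $j-i=|U|$.

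The computation of $\beta_{i,j}(H)$ is essentially mechanical once Algorithm \ref{Algorithm} is available, so the only step requiring genuine care is the bound $\im(H)\le|U|$: I must verify that an induced matching of $H$ restricts to an induced matching of the induced subgraph $H[E_e]$, so that Lemma \ref{Dirichlet} and the pigeonhole principle really do force the cardinality bound. I expect this to be the main (though minor) obstacle; everything else is bookkeeping with the homological and internal degrees in Hochster formula.
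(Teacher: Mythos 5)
Your proposal is correct, and for the displayed non-vanishing and the bound on $\pd(H)$ it coincides exactly with the paper's proof: both read off the single summand of Hochster's formula indexed by $W=V(H)\backslash S$, where Algorithm \ref{Algorithm} guarantees $\widetilde{H}_{|U|-1}(\Delta(H\backslash S);\kk)\cong\kk$, and both prove $\im(H)=|U|$ by the same pigeonhole argument against the rectangular decomposition and Lemma \ref{Dirichlet}. The genuine difference is in how you obtain $\reg(H)=\im(H)$. You import it from Ene--Zarojanu, whereas the paper proves it internally: it uses the standard inequality $\im(H)\le\reg(H)$, and for the converse it takes $r=\reg(H)$, picks via Hochster's formula a subset $W$ with $\widetilde{H}_{r-1}(\Delta(H[W]);\kk)\ne 0$, observes that $H[W]$ is again a skew Ferrers graph, and runs Algorithm \ref{Algorithm} on $H[W]$ to conclude that this homology can be non-zero only in degree $|U_W|-1$ for some induced matching $U_W$ of $H[W]$ (hence of $H$), giving $r=|U_W|\le\im(H)$. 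Your shortcut is sanctioned by the paper itself: the remark following Theorem \ref{Betti} states that the equality $\reg(H)=\im(H)$ ``could be obtained also using \cite[Corollary 2.4]{EZ}'', and the proof of Proposition \ref{CM} cites \cite[Theorem 2.2]{EZ} for the same equality, so your citation is legitimate (though you should check whether Theorem 2.2 or Corollary 2.4 of \cite{EZ} is the precise statement needed for the bipartite graph $H$ rather than for the closed graph $G$). What the paper's self-contained argument buys is independence from external results and, as a by-product, the characteristic-independence of all Betti numbers of skew Ferrers graphs noted in Remark \ref{main-thm2}, since the algorithm computes the homology of every induced subgraph $H[W]$; your version is shorter but inherits whatever hypotheses the Ene--Zarojanu result carries.
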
 
\begin{proof}  Since $|V(H)\backslash S| = 2(n-1)-|S|$, by Hochster formula, we have 
$$ \beta_{2(n-1)-|U|-|S|,2(n-1)-|S|}(H)  \ge   \dim_{\kk} \widetilde{H}_{|U|-1}(\Delta(H\backslash S),\kk)=1. $$
Thus, $\pd(H)\ge 2(n-1)-|U|-|S|$.

We claim that  $\im (H) = |U|$. Indeed, if $U$ is not a maximum induced matching of $H$, then  there is an induced matching $U'$ such that $|U'|>|U|$. Then by Pigenhole principle, there are  at least two edges $e, e'$ in $U'$ belonging to $E_{e_i}$ for some $i$, but this fact contradicts Lemma \ref{Dirichlet}, as claimed. So $\im (H)\le \reg(H)$.

Therefore, by the definition of regularity, we have    $\im(H)\le \reg(H)$. Conversely, in order to prove $\im(H)\ge \reg(H)$, we let  $r:=\reg(H)$. Then there exists $i$ such that $\beta_{i,i+r}(H)\ne 0$. By Hochster formula, there is a subset $W\subseteq V(H)$ such that $|W| = i+r$ and $\dim_{\kk} \widetilde{H}_{r-1}(\Delta(H[W]);\kk) \ne 0$.  Note that $H[W]$ is a  skew Ferrers graph. Applying  the Algorithm \ref{Algorithm} for $H[W]$,  there is an induced matching $U_{W}$ of $H[W]$ such that  $\dim_{\kk} \widetilde{H}_{r-1}(\Delta(H[W]);\kk)=1$, and $r-1=|U_{W}|-1$.   Therefore, $\reg(H) = |U_{W}|\le \im(H)$. We conclude that $\reg(H) = \im(H)=|U|$.
 \end{proof}
 
 \begin{rem} The equality $\reg(H) = \im(H)$ could be obtained also using \cite[Corollary 2.4]{EZ}.
 \end{rem}

\begin{cor} \label{e-Betti2} Let $H$ is an initial-closed graph. If $0<\mu_s=\ldots=\mu_1$, then   $\beta_{2(n-1)-(\mu_1+s),2(n-1)-(\mu_1+s)+2}(H)$   is the  unique extremal Betti number. 
\end{cor}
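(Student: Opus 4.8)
The plan is to reduce everything to the characterization recalled in the introduction: $R/I$ has a unique extremal Betti number precisely when $\beta_{p,p+r}(R/I)\neq 0$ for $p=\pd$ and $r=\reg$. Writing $c:=\mu_1$ for the common value, Proposition \ref{equal_mu} already gives $\pd(H)=2(n-1)-(c+s)$, so it suffices to prove that $\reg(H)=2$ and that $\beta_{p,p+2}(H)\neq 0$ with $p=2(n-1)-(c+s)$; both will follow at once from running Algorithm \ref{Algorithm} and invoking Theorem \ref{Betti}.

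First I would trace the algorithm through its loops and check that it halts in exactly two steps. In the first loop the chosen edge is $\{x_{n-1},y_{n-1}\}$; since $\mu_i=0$ exactly for $i\geq s+1$, its neighborhood is $\{x_{s+1},\ldots,x_{n-1},y_{n-1}\}$, and deleting it leaves the induced subgraph on $\{x_1,\ldots,x_s\}\cup\{y_1,\ldots,y_{n-c-1}\}$ together with the zero columns $y_{n-c},\ldots,y_{n-2}$; thus $S$ acquires these $c-1$ vertices. The surviving graph is a staircase on $\{x_1,\ldots,x_s\}\cup\{y_1,\ldots,y_{n-c-1}\}$, and here I would use the defining inequality $\mu_s=c\le n-2-s$ of an initial-closed graph (Lemma \ref{lem15}) to guarantee $s<n-c-1$, so the second loop is well defined. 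In that loop the chosen edge is $\{x_s,y_{n-c-1}\}$; its neighborhood is $\{x_1,\ldots,x_s,y_s,\ldots,y_{n-c-1}\}$, whose removal isolates $y_1,\ldots,y_{s-1}$ and empties $H$. Hence $S$ gains these $s-1$ vertices, and the algorithm returns $|U|=2$ and $|S|=(c-1)+(s-1)=c+s-2$.

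With these counts in hand, Theorem \ref{Betti} immediately gives $\reg(H)=\im(H)=|U|=2$ together with
$$\beta_{2(n-1)-|U|-|S|,\,2(n-1)-|S|}(H)=\beta_{2(n-1)-(c+s),\,2(n-1)-(c+s)+2}(H)\neq 0.$$
Since $2(n-1)-(c+s)=\pd(H)$ by Proposition \ref{equal_mu} and the second index equals $\pd(H)+\reg(H)$, this is exactly $\beta_{p,p+r}(H)$ with $p=\pd(H)$ and $r=\reg(H)$. By the quoted characterization it is therefore the unique extremal Betti number, which upon substituting $c=\mu_1$ is precisely $\beta_{2(n-1)-(\mu_1+s),\,2(n-1)-(\mu_1+s)+2}(H)$, proving the corollary.

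I expect the only real obstacle to be the bookkeeping in the two algorithm steps — correctly identifying each neighborhood $N_H(\{x_{i_u},y_{j_v}\})$ and which columns become zero — together with the boundary checks ($s<n-c-1$, and the degenerate subcases $c=1$ or $s=1$, where one of the two batches added to $S$ is empty but the total count $c+s-2$ still holds). Everything else is a direct appeal to Proposition \ref{equal_mu}, Theorem \ref{Betti}, and the extremal-Betti characterization.
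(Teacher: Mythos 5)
Your proposal is correct and follows essentially the same route as the paper's own proof: run Algorithm \ref{Algorithm} (which indeed terminates in two loops, yielding $U=\{\{x_{n-1},y_{n-1}\},\{x_s,y_{n-1-\mu_1}\}\}$ and $S=\{y_1,\ldots,y_{s-1},y_{n-\mu_1},\ldots,y_{n-2}\}$, matching your counts $|U|=2$, $|S|=\mu_1+s-2$), then apply Theorem \ref{Betti} to get $\reg(H)=2$ and $\beta_{p,p+2}(H)\ne 0$, and Proposition \ref{equal_mu} to identify $p=2(n-1)-(\mu_1+s)$ with $\pd(H)$, concluding via the characterization of a unique extremal Betti number. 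Your explicit tracing of the two loops and the boundary checks ($s<n-\mu_1-1$, degenerate cases $\mu_1=1$ or $s=1$) only makes explicit what the paper leaves implicit.
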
 
\begin{proof}   By   Algorithm \ref{Algorithm}, we have  $ S=\{y_1,\ldots,y_{s-1}, y_{n-\mu_1},\ldots,y_{n-2}\}$,  and   $$U=\{\{x_{s},y_{n-1-\mu_1}\}, \{x_{n-1},y_{n-1}\}\}.$$ 
Thus,  by Theorem \ref{Betti}, $\reg(H)=2$ and  $\beta_{p,p+2}(H)\ne 0$, where $p:=2(n-1)-(\mu_1+s)$. Furthermore, by Proposition \ref{equal_mu}, $\pd(H)=p$.  Therefore, $\beta_{p,p+2}(H)$ is the unique  extremal Betti number. 
\end{proof}

\begin{cor} \label{e-Betti} Let $H$ is an initial-closed graph. If $0<\mu_s<\ldots<\mu_1< n-s$, then   $\beta_{2(n-1)-(\mu_s+s),2(n-1)-(\mu_s+s)+3}(H)$  is the unique extremal Betti number. 
\end{cor}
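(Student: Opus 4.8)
The plan is to run Algorithm \ref{Algorithm} on $H$ explicitly under the hypothesis $0<\mu_s<\ldots<\mu_1<n-s$, just as in the proof of Corollary \ref{e-Betti2}, in order to read off the induced matching $U$ and the zero-column set $S$, and then invoke Theorem \ref{Betti} together with Theorem \ref{main}. First I would identify the rightmost edge $\{x_{n-1},y_{n-1}\}$, which always lies in $U$, and then track which columns become zero and which vertices are deleted at each loop. Because the $\mu_j$ are now \emph{strictly} decreasing (unlike the constant case of Corollary \ref{e-Betti2}), I expect the algorithm to select one edge per distinct ``step'' in the skew Ferrers diagram, so that $|U|=3$ and hence $\reg(H)=\im(H)=3$ by Theorem \ref{Betti}. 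This is the essential structural difference that forces the homological degree up by one compared with the equal-$\mu$ case, producing the shift $+3$ rather than $+2$ in the second index of the claimed Betti number.

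Next I would compute $|S|$, the accumulated set of isolated-column vertices, and check that the resulting bidegree from Theorem \ref{Betti}, namely $\beta_{2(n-1)-|U|-|S|,\,2(n-1)-|S|}(H)\neq 0$, matches $\beta_{p,p+3}(H)$ with $p=2(n-1)-(\mu_s+s)$. The computation of $|S|$ is the step where I expect the bookkeeping to be most delicate: I would argue that $S$ collects the vertices $y_1,\ldots,y_{s-1}$ together with the columns that drop out at the right end because of the strict inequalities among the $\mu_j$, and verify that $|U|+|S|=\mu_s+s$ so that the first index is exactly $p$. The strict-descent hypothesis is precisely what guarantees these column sets are disjoint and counted without overlap, which is why the clean formula $\pd(H)=2(n-1)-(\mu_s+s)$ holds here.

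The key point that upgrades a nonvanishing Betti number into the \emph{unique extremal} one is the coincidence of the homological position $p$ from Theorem \ref{Betti} with the projective dimension. By the second, ``in particular'' assertion of Theorem \ref{main}, the hypothesis $\mu_s<\ldots<\mu_1<n-s$ gives $\pd(H)=2(n-1)-(\mu_s+s)=p$. Combined with $\reg(H)=3$, I would then appeal to the characterization recalled in the introduction: $S/I$ has a unique extremal Betti number if and only if $\beta_{p,p+r}(S/I)\neq 0$ with $p=\pd$ and $r=\reg$. Here $r=3$ and $\beta_{p,p+3}(H)\neq 0$, so $\beta_{p,p+3}(H)$ is forced to be the unique extremal Betti number.

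The main obstacle I anticipate is the precise combinatorial verification in the first two paragraphs, namely showing that $|U|=3$ and $|S|$ takes exactly the value making $|U|+|S|=\mu_s+s$; everything else is an assembly of Theorems \ref{Betti} and \ref{main} with the extremal-Betti criterion. In carrying out the loop of Algorithm \ref{Algorithm}, I would be careful that at each stage the remaining graph is again a skew Ferrers (indeed initial-closed) graph so that Lemma \ref{matrix} applies and the reduction in equations (\ref{eqn1}) and (\ref{eqn2}) is legitimate; the strict inequalities are what keep the three rectangular blocks genuinely separated, so that no accidental merging of loops occurs and the induced matching has size exactly three.
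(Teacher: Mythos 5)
Your proposal is correct and follows essentially the same route as the paper: the paper's proof consists precisely of running Algorithm \ref{Algorithm} to obtain $U=\{\{x_{s-1},y_{s-1}\},\{x_s,y_{n-1-\mu_s}\},\{x_{n-1},y_{n-1}\}\}$ and $S=\{y_1,\ldots,y_{s-2},y_{n-\mu_s},\ldots,y_{n-2}\}$, then citing Theorem \ref{Betti} for $\reg(H)=3$ and $\beta_{p,p+3}(H)\neq 0$ with $p=2(n-1)-(\mu_s+s)$, and Theorem \ref{main} for $\pd(H)=p$, exactly as you outline. One bookkeeping slip to fix when you carry out the algorithm: $S$ contains $y_1,\ldots,y_{s-2}$, not $y_1,\ldots,y_{s-1}$ (the vertex $y_{s-1}$ is matched by the edge $\{x_{s-1},y_{s-1}\}\in U$, not discarded as a zero column), giving $|S|=\mu_s+s-3$ so that $|U|+|S|=\mu_s+s$ holds exactly as your own consistency check demands.
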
 
\begin{proof}    
By Algorithm \ref{Algorithm}, we have $S=\{y_1,\ldots,y_{s-2}, y_{n-\mu_s},\ldots,y_{n-2}\}$ and $$U=\{\{x_{s-1},y_{s-1}\}, \{x_s, y_{n-1-\mu_s}\}, \{x_{n-1},y_{n-1}\}\}.$$
By Theorem \ref{Betti}, $\reg(H)=3$ and $\beta_{p,p+3}(H)\ne 0$, where $p:=2(n-1)-(\mu_s+s)$. Therefore, the corollary follows from  by Theorem \ref{main}. 
\end{proof}

\noindent {\bf Question 1.}  If  $H$ is an initial-closed graph, then does $R/I(H)$ have the unique   extremal Betti number? 

 \section{Extremal Betti numbers of closed graphs}

We obtain here a partial answer of Ene, Herzog and Hibi conjecture \cite{EHH} in Theorem \ref{main-thm3}.

\begin{lem} \label{main-lem1} Let  $G$ be  a connected closed graph without cut point,  $\mu(G)=(\mu_1,\ldots,\mu_n)$ and $s:=\min\{k-1\mid \mu_k=0\}$. Let $R:=\kk[x_i,y_i\mid i\in V(G)]$.  If one of three following conditions is satisfied:
\begin{enumerate}
\item $ s=0$, or 
\item $0<\mu_s=\ldots=\mu_1$, or 
\item $0<\mu_s<\ldots<\mu_1<n-s$,
\end{enumerate}
then $R/\iin(J_G)$ and $R/J_G$  have an unique extremal Betti number, and they are equal. In particular, $\pd(R/J_G)=\pd(R/\iin(J_G))$ and in   the case   $s=0 $, $\pd(R/\iin(J_G))=n-1$; and in  the remain two cases, $\pd(R/\iin(J_G))=2n-\mu_s-s-2$. 
\end{lem}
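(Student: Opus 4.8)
The plan is to translate everything about the binomial edge ideal $J_G$ and its initial ideal $\iin(J_G)$ into statements about the initial-closed graph $H$, and then invoke the edge-ideal results proved in Sections 2 and 3. Since $G$ is connected closed without cut point, Lemma \ref{def-ini-clo} tells us that $H := H' \backslash \{x_n, y_1\}$ is connected, where $H'$ has edge ideal $\iin(J_G)$; moreover $x_n$ and $y_1$ are isolated vertices of $H'$ by Lemma \ref{def-ini}(1). Thus $R/\iin(J_G)$ and $\kk[V(H)]/I(H)$ have the same graded Betti numbers (the two isolated vertices contribute only polynomial-ring factors), so $\beta_{i,j}(R/\iin(J_G)) = \beta_{i,j}(H)$ for all $i,j$, and in particular $\pd(R/\iin(J_G)) = \pd(H)$ and $\reg(R/\iin(J_G)) = \reg(H)$. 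The first step, therefore, is just to record this dictionary and note that the hypotheses $s=0$, $0<\mu_s=\cdots=\mu_1$, and $0<\mu_s<\cdots<\mu_1<n-s$ on $\mu(G)$ transfer verbatim to the associated vector $\mu(H)$.

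Next I would dispose of the three cases for $H$ directly. In case (1), $s=0$ means $\mu(H)=(0,\ldots,0)$, so by Proposition \ref{CM} the graph $H$ is Cohen-Macaulay with $\reg(H)=1$ and $\pd(H)=n-1$; a Cohen-Macaulay graph has a unique extremal Betti number sitting at $(\pd, \pd + \reg)$. In case (2), Corollary \ref{e-Betti2} says directly that $\beta_{2(n-1)-(\mu_1+s),\,2(n-1)-(\mu_1+s)+2}(H)$ is the unique extremal Betti number, with $\pd(H)=2(n-1)-(\mu_1+s)$ and $\reg(H)=2$. In case (3), Corollary \ref{e-Betti} gives the unique extremal Betti number $\beta_{2(n-1)-(\mu_s+s),\,2(n-1)-(\mu_s+s)+3}(H)$, with $\pd(H)=2(n-1)-(\mu_s+s)$ and $\reg(H)=3$. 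In all three cases $H$ has a \emph{unique} extremal Betti number, equivalently $\beta_{p,p+r}(H)\neq 0$ where $p=\pd(H)$ and $r=\reg(H)$. Note that $2(n-1)-(\mu_s+s) = 2n-\mu_s-s-2$, matching the stated projective dimensions.

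The key remaining step is to pass from the uniqueness of the extremal Betti number of $\iin(J_G)$ to the same statement for $J_G$, together with the equality of these numbers. For this I would use the equality of Hilbert functions of $R/J_G$ and $R/\iin(J_G)$, exactly as in the proof of Proposition \ref{depth-extremal}: writing both Hilbert series with numerator $\sum_{i,j}(-1)^i\beta_{i,j}(\cdots)t^j$ over the same denominator forces $\sum_i (-1)^i \beta_{i,j}(R/J_G) = \sum_i (-1)^i \beta_{i,j}(R/\iin(J_G))$ for every $j$. Combined with the semicontinuity inequality $\beta_{i,j}(R/J_G) \leq \beta_{i,j}(R/\iin(J_G))$ recalled in the introduction, the corner Betti number is pinned down: since $\beta_{p,p+r}(R/\iin(J_G))$ is the unique extremal one, the numerator in degree $j=p+r$ involves only the single term at homological position $p$, and alternating-sum equality forces $\beta_{p,p+r}(R/J_G)=\beta_{p,p+r}(R/\iin(J_G))\neq 0$. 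From $\beta_{p,p+r}(R/J_G)\neq 0$ together with $\pd(R/J_G)\le p$ and $\reg(R/J_G)\le r$ (again from semicontinuity), we get $\pd(R/J_G)=p$, $\reg(R/J_G)=r$, and hence $R/J_G$ also has a unique extremal Betti number, equal to that of $R/\iin(J_G)$.

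The main obstacle is the last step: controlling \emph{all} the Betti numbers in the critical degree $j=p+r$ well enough that the Hilbert-function identity actually isolates a single term. Here the uniqueness of the extremal Betti number of $\iin(J_G)$ is exactly what makes this work — it guarantees that $\beta_{i,p+r}(R/\iin(J_G))=0$ for every $i\neq p$ in the relevant range, so there are no competing contributions to cancel against, and the semicontinuity bound then forces the corresponding $\beta_{i,p+r}(R/J_G)$ to vanish as well. This is why the three combinatorial hypotheses on $\mu(G)$, which are precisely the cases in which Corollaries \ref{e-Betti2} and \ref{e-Betti} and Proposition \ref{CM} deliver a \emph{unique} extremal Betti number, are essential rather than incidental.
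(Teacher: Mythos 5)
Your proposal is correct and follows essentially the same route as the paper's proof: reduce to the initial-closed graph $H$, invoke Proposition \ref{CM}, Corollary \ref{e-Betti2}, or Corollary \ref{e-Betti} to get the unique extremal Betti number of $R/\iin(J_G)$, and then transfer it to $R/J_G$ via equality of Hilbert functions. The only difference is that you spell out the final cancellation step (semicontinuity plus the alternating-sum identity isolating the single nonzero entry in column $p+r$) which the paper leaves implicit in its one-line appeal to the Hilbert function; this is a faithful elaboration, not a different argument.
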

\begin{proof}  
Let $H$ be the initial-closed graph associated to $G$, i.e. $I(H)=\iin(J_G)$.

\medskip

(1)   By Proposition \ref{CM}, $\reg(H)=1$ and $\pd(H) = n-1$. We have $R/I(H)$ and  $R/\iin(J_G)$ have the same Betti numbers; so $\reg(R/\iin(J_G))=1$, $\pd(R/\iin(J_G))=n-1$ and  $R/\iin(J_G)$ has an unique extremal Betti number. As $R/\iin(J_G)$ and $R/J_G$ have the same Hilbert function,  they have the same Betti numbers.

\medskip

(2)  Let $p:=2(n-1)-(\mu_1+s)$.  By Corollary  \ref{e-Betti2},   $\beta_{p,p+2}(R/\iin(J_G))$ is the unique extremal Betti number of $R/\iin(J_G)$.  So, $\pd(R/\iin(J_G))=p$ and $\reg(R/\iin(J_G))=2$.  Since $R/J_G$ and $R/\iin(J_G)$ have the same Hilbert function, $\beta_{p,p+2}(R/J_G)= \beta_{p,p+2}(R/\iin(J_G))$.  Thus $\pd(R/J_G)=p$ and $\reg(R/J_G)=2$ and we conclude that $\beta_{p,p+2}(R/J_G)$ is the unique extremal Betti number of $R/J_G$.

\medskip

(3)  Let $p:=2n-\mu_s-s-2$.  By Corollary  \ref{e-Betti},   $\beta_{p,p+3}(R/\iin(J_G))$ is the unique extremal Betti number of $R/\iin(J_G)$; and we obtain the claim as in (2).

\end{proof}

Using Proposition \ref{depth-extremal} and Lemma \ref{main-lem1} we obtain the following theorem:
\begin{thm} \label{main-thm3} 
Let $G$ be a connected closed graph with $\ell$ cut points $v_1,\ldots,v_{\ell}$. Assume that $G=G_1\cup\ldots\cup  G_{\ell+1}$ such that $G_i\cap G_{i+1}=\{v_i\}$ and $G_i\cap G_j = \emptyset$ for $i=1,\ldots,\ell$ and $i\ne j\ne i+1$. Let  $n_i=|V(G_i)|$ and  $\mu(G_i)= (\mu_{i1},\ldots,\mu_{in_i})$, where $s_i:=\min\{k-1\mid \mu_{ik} =0\}$. If for each $i$,    one of three following conditions is satisfied:
\begin{enumerate}
\item $s_{i}=0$,   or 
\item $0<\mu_{is_i}=\ldots=\mu_{i1}$,   or 
\item $0<\mu_{is_i}<\ldots<\mu_{i1}<n_i-s_i$;
\end{enumerate}
 then $R/\iin(J_G)$ and $R/J_G$  have an unique extremal Betti number, and they are equal. In particular, $\pd(R/J_G)=\pd(R/\iin(J_G))$.
 \end{thm}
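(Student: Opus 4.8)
The plan is to induct on the number of cut points $\ell$, feeding the single-block computation of Lemma \ref{main-lem1} into the gluing mechanism of Proposition \ref{depth-extremal}. The base case $\ell=0$ is exactly Lemma \ref{main-lem1}: a connected closed graph without cut point whose vector $\mu(G)$ satisfies one of the three conditions has a unique extremal Betti number for both $\iin(J_G)$ and $J_G$, and these agree. For the inductive step I would split off one block, writing $G=\widetilde G\cup G_{\ell+1}$ with $\widetilde G=G_1\cup\cdots\cup G_\ell$ and $\widetilde G\cap G_{\ell+1}=\{v_\ell\}$, so that $v_\ell$ is a cut point, $\widetilde G$ is a connected closed graph carrying the cut points $v_1,\dots,v_{\ell-1}$, and $G_{\ell+1}$ is cut-point-free. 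The blocks of $\widetilde G$ still satisfy the hypotheses, so the induction hypothesis gives that $\iin(J_{\widetilde G})$ has a unique extremal Betti number, equivalently $\beta_{\widetilde p,\widetilde p+\widetilde r}\neq 0$ with $\widetilde p=\pd$ and $\widetilde r=\reg$; Lemma \ref{main-lem1} gives the analogous nonvanishing corner, with invariants $p_{\ell+1}$ and $r_{\ell+1}$, for $G_{\ell+1}$.

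The engine is then the argument of Proposition \ref{depth-extremal}. Writing $H_i$ for the initial-closed graph of $G_i$, the key structural input is that the initial-closed graph $H$ of $G$ decomposes as the disjoint union $H_1\sqcup\cdots\sqcup H_{\ell+1}$; equivalently, since Betti numbers are unchanged by the two free variables $x_n,y_1$, one has $\Delta(H)=\Delta(H_1)*\cdots*\Delta(H_{\ell+1})$. I would establish this by induction from Lemma \ref{intr_depth}: gluing $\widetilde G$ to $G_{\ell+1}$ at the single cut point $v_\ell$ forces $\deg_{\widetilde G}^{>}(v_\ell)=\deg_{G_{\ell+1}}^{<}(v_\ell)=0$, which is precisely the local condition separating the components of $H'\backslash\{x_n,y_1\}$ that carry $y_{v_\ell}$ and $x_{v_\ell}$, so the initial-closed graph of $G$ is the disjoint union of those of $\widetilde G$ and $G_{\ell+1}$. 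Granting this, projective dimension and regularity are additive over disjoint unions, whence $\pd(R/\iin(J_G))=\widetilde p+p_{\ell+1}=:p$ and $\reg(R/\iin(J_G))=\widetilde r+r_{\ell+1}=:r$. An iterated application of the K\"unneth formula, as in Proposition \ref{depth-extremal}, then shows that for a suitable $W$ of size $p+r$ the reduced homology $\widetilde{H}_{r-1}(\Delta(H[W]);\kk)$ contains the tensor product of the top nonvanishing homologies of the two pieces, so its dimension is positive; hence $\beta_{p,p+r}(R/\iin(J_G))\neq 0$ and $R/\iin(J_G)$ has a unique extremal Betti number.

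Finally I would transfer the conclusion to $J_G$ exactly as in Proposition \ref{depth-extremal}: $\iin(J_G)$ and $J_G$ share a Hilbert function, so the numerators of their Hilbert series coincide; since $\beta_{p,p+r}(R/\iin(J_G))$ is the unique extremal, and hence an isolated uncancellable term, and since $\beta_{i,j}(R/J_G)\leq\beta_{i,j}(R/\iin(J_G))$ by semicontinuity, the corner cannot drop. This yields $\beta_{p,p+r}(R/J_G)=\beta_{p,p+r}(R/\iin(J_G))\neq 0$ together with $\pd(R/J_G)=p$ and $\reg(R/J_G)=r$, which forces a unique extremal Betti number for $R/J_G$ as well, equal to that of $R/\iin(J_G)$.

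The hard part, I expect, is not any single computation but the bookkeeping at the inductive step. Proposition \ref{depth-extremal} is stated for two blocks both \emph{without} cut point, whereas here one side, $\widetilde G$, carries its own cut points. The real content is therefore to verify that cut-point-freeness is used in Proposition \ref{depth-extremal} only through the identification of the connected components of $H'\backslash\{x_n,y_1\}$ supplied by Lemma \ref{intr_depth}, and that this identification is local to the separating cut point $v_\ell$ and so is insensitive to the internal cut points of $\widetilde G$. Once the disjoint-union decomposition $H=H_1\sqcup\cdots\sqcup H_{\ell+1}$ is secured, the homological part is a routine iteration of the K\"unneth and Hilbert-function arguments already present in Proposition \ref{depth-extremal}.
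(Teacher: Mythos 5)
Your proposal is correct and follows essentially the same route as the paper, whose entire proof is the single sentence that the theorem follows from Proposition \ref{depth-extremal} and Lemma \ref{main-lem1}. The induction on $\ell$ that you spell out --- including the verification that the gluing argument of Proposition \ref{depth-extremal} uses cut-point-freeness of the two sides only through the local separation of components at the cut point (Lemma \ref{intr_depth}), so it survives when one side carries internal cut points --- is exactly the bookkeeping the paper leaves implicit.
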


\begin{rem} If Question 1 has an affirmative answer,   theorem \ref{main-thm3} will be true for all closed graph $G$;  and thus    $\reg(R/J_G)=\reg(R/\iin (J_G))$ and $\pd(R/J_G)=\pd(R/\iin (J_G))$.
\end{rem}

\subsection*{Acknowledgment}  This work was finished during the second author's postdoctoral fellowship at Universidad Nacional Aut\'onoma de Mexico (UNAM) and Universidad Aut\'onoma de Zacatecas  (UAZ). He would like to thank UNAM and UAZ for the financial support and hospitality. He is also partially supported by the NAFOSTED (Vietnam) under grant number 101.04-2015.02. The first author would like to thank Luis Manuel Rivera for stimulating discussions.

  \end{document}